\documentclass{imsd2026_preprint}

\begin{document}

\begin{center}
  \fontsize{12}{16}{\bf
    1-Lipschitz Neural Networks on Hadamard Manifolds
    }
\end{center}

\begin{center}
\normalsize{
  \bf{
    Davide Murari$^{*}$,
    Marta Ghirardelli$^\dag$,
    Ben Adcock$^\#$,
    Elena Celledoni$^\dag$
    ,
    Brynjulf Owren$^\dag$
    Carola-Bibiane Sch\"onlieb$^{*}$
    }
}
\end{center}
\begin{center}
  \begin{tabular}{c}
    Department of Applied Mathematics and Theoretical Physics, \\
    University of Cambridge \\
    {[dm2011, cbs31]@cam.ac.uk}
  \end{tabular}

  \vspace{0.5ex}

  \begin{tabular}{c}
    $^\dag$ Department of Mathematical Sciences, \\
    Norwegian University of Science and Technology (NTNU), \\
    {[marta.ghirardelli, elena.celledoni, brynjulf.owren]@ntnu.no} \\
  \end{tabular}
  
  \vspace{0.5ex} 

  \begin{tabular}{c}
    Department of Mathematics, \\
    Simon Fraser University, \\
    {adcockb@sfu.ca}     \\	
  \end{tabular} \\

\end{center}

\section*{ABSTRACT}
Controlling the Lipschitz constant of a neural network is a standard way to promote robustness
and stability. Most existing constraining strategies are designed for Euclidean spaces. In this work, we construct and analyze a class of 1-Lipschitz neural networks on Hadamard manifolds. Our layers are of gradient-descent type, $1$-Lipschitz, and quasi-$\alpha$-firmly nonexpansive. The core building blocks of the proposed architecture are Busemann functions, and we exploit the properties of Busemann gradient flows to design $1$-Lipschitz geometry-preserving layers. We provide explicit constructions and examples for hyperbolic manifolds and the manifold of symmetric positive definite (SPD) matrices. We test the proposed architecture in two numerical experiments: robust classification on the Poincar\'e disk and masked-Wishart covariance reconstruction. On the Poincar\'e disk, the proposed networks yield robust classifiers under hyperbolic
perturbations. On the SPD manifold, we train SPD-valued denoisers and adopt them as a
Plug-and-Play prior for a masked-Wishart covariance reconstruction problem. We show improved results from the nonexpansive denoiser over static, data-only, and Log-Euclidean denoising baselines, and empirically test its
convergence properties.

\textbf{Keywords: }
1-Lipschitz neural networks,
Hadamard manifolds,
Busemann functions,
geometric deep learning,
hyperbolic neural networks,
Plug-and-Play

\textbf{MSCcodes: }
68T07, 53C20, 47H09, 47H10


\section{Introduction}
Controlling the sensitivity of a neural network is important in several settings where stability is a modeling requirement. This is the case in adversarially-robust classification, in Wasserstein-based generative modeling, and in learned iterative methods for inverse problems, where the network often appears inside a fixed-point iteration and must interact well with the surrounding algorithmic structure \cite{cisse2017parseval,miyato2018spectral,anil2019sorting,combettes2020lipschitz,sherry24dsn}. In Euclidean spaces, this has led to an extensive literature on architectures with controlled Lipschitz constant. This constraint is usually enforced through spectral normalization, orthogonality constraints, monotone and gradient-type residual blocks \cite{cisse2017parseval,miyato2018spectral,qian2019l2,anil2019sorting,combettes2020lipschitz,meunier2022dynamical,sherry24dsn}. These constructions show that stability can be encoded structurally, rather than checked a posteriori.

Many learning problems are not naturally posed in flat spaces. Data may satisfy intrinsic constraints, or admit geometric representations that a curved latent space can capture efficiently. This is one of the main themes of geometric deep learning, where the goal is to design models that respect the geometry of the domain rather than forcing it into Euclidean coordinates \cite{bronstein2021geometric}. Hyperbolic representation learning is one of the most prominent examples of this paradigm. Negatively curved spaces are well-suited to hierarchical or tree-like data and have therefore led to a rich literature on hyperbolic embeddings and hyperbolic neural networks \cite{nickel2017poincare,sala2018representation,ganea2018hyperbolic,peng2021hyperbolic}. More broadly, manifold-valued architectures already appear in intrinsic convolutions on Riemannian manifolds, networks on the manifold of symmetric positive definite (SPD) matrices, and constructions on noncompact symmetric spaces \cite{masci2016geodesic,huang2017riemannian,sonoda2022symmetric,nguyen2023matrix}. Most of this literature, however, focuses on expressive geometry-aware layers rather than a systematic stability theory.

This matters in concrete applications. Manifold-valued data arise in directional statistics and phase-valued imaging on circles and spheres \cite{mardia2000directional,bergmann2019recent}, in rotation-valued signals and images such as electron backscatter diffraction data on $\mathrm{SO}(3)$ \cite{bachmann2011grain,bergmann2019recent}, and in covariance-based vision where SPD descriptors are used for recognition and classification \cite{tuzel2006region,harandi2014manifold}. A particularly relevant example is diffusion tensor imaging. Here, each voxel is represented by an SPD matrix encoding anisotropic diffusion \cite{weinmann2014total,gorokh2016dti}. These settings call for intrinsic neural layers that respect the geometry of the state space while retaining the stability properties to make them useful in robust learning and inverse problems.

Among stability notions, plain $1$-Lipschitz continuity is often only the first step. In inverse problems and learned fixed-point methods, the more relevant classes are averaged, and firmly nonexpansive operators, since these are the ones that naturally appear in proximal algorithms, forward--backward splitting, plug-and-play methods, and equilibrium formulations \cite{moreau1965proximite,bauschke2017correction,combettes2021fixedpoint,ryu2019plug,hurault2022proximal,bai2019deq,baker2023implicit}. The importance of this viewpoint is that it connects stable layers with convergent learned solvers.

The Euclidean architectures most relevant to the present work are those derived from gradient flows. Recent papers have shown that residual blocks of the form
\[
x \mapsto x - \tau \nabla V(x)
\]
can provide a natural route to nonexpansive (i.e., 1-Lipschitz) and averaged networks \cite{meunier2022dynamical,sherry24dsn,celledoni23dsb}. They have also recently been shown to generate universal $1$-Lipschitz scalar function approximators \cite{murari2026approximation}. For these layers, the Lipschitz constraint follows from the convexity and $L$-smoothness of the potential. The same formalism extends beyond $\mathbb{R}^n$. On a Riemannian manifold, one can replace the Euclidean gradient by the Riemannian one, obtaining the map
\[
T_\tau(x)=\exp_x\bigl(-\tau \grad V(x)\bigr).
\]
Whenever the manifold geometry supports a workable convexity theory, this gives a principled mechanism for building geometry-preserving stable layers.

This is where Hadamard manifolds become the natural setting. These manifolds are complete, simply connected, and nonpositively curved. They therefore combine a broad geometric scope with the global properties needed for analysis. Geodesics are unique, and geodesic convexity admits a global theory closely related to convex analysis in Hilbert spaces \cite{bridson2013metric,bacak2014convex,boumal2023introduction,monotoneFieldsHadamard}. The class includes the standard hyperbolic models used in representation learning, as well as other geometries of independent interest, including the SPD manifolds. Just as importantly, the fixed-point theory survives in this setting. Averagedness must be reformulated through geodesic interpolation, leading to natural extensions of firmly nonexpansive and $\alpha$-firmly nonexpansive mappings on Hadamard spaces \cite{ariza2014firmly,ariza2015asymptotic,berdellima2020notion,berdellima2022alpha}. These results provide the right framework for studying manifold-valued layers.

To the best of our knowledge, however, there is no prior work whose main objective is the design and analysis of stable neural networks on manifolds. Existing papers on manifold-valued networks explain how to build intrinsic architectures, while the Euclidean stability literature explains how robustness and convergent dynamics can be obtained from Lipschitz, averaged, or gradient-based constructions. The gap addressed in this paper is precisely between these two threads. We transport Euclidean stability constructions to the Hadamard setting and study whether they remain both analytically useful and practically effective.

For concrete layer design, we focus on gradient-type updates generated by geodesically convex potentials. On some important Hadamard manifolds, Busemann functions provide a particularly natural source of such potentials and hence a convenient modeling tool for intrinsic layers. In hyperbolic learning, they already appear, explicitly or implicitly, in prototype methods, horospherical classification, dimensionality reduction, and sliced-Wasserstein constructions \cite{ghadimi2021hyperbolic,chami2021horopca,fan2023horospherical,bonet2023hypersw,atigh2026granularity}. On the manifold of symmetric positive definite matrices, horoballs and horospheres have also been studied from a geometric viewpoint. Explicit formulae for the associated radial fields, that is, the negative gradients of Busemann functions, have recently become available \cite{fletcher2011horoball,bonet2025sliced,shin2026radial}. What is missing in these lines of work is the use of Busemann functions as building blocks for stable gradient-type layers equipped with nonexpansive or averaged guarantees. This makes them a natural bridge between existing geometric constructions and the framework developed here.

\subsection{Main contributions}
In this paper, we study intrinsic gradient-descent-type layers on Hadamard manifolds and show how to endow them with stability properties familiar from the Euclidean theory. Our main contributions are as follows.
\begin{itemize}
    \item We establish a bridge between 1-Lipschitz Euclidean gradient layers and intrinsic gradient steps on Hadamard manifolds. We propose Busemann gradient-descent layers and prove their nonexpansiveness under a stepsize restriction. We also show they are quasi $\alpha$-firmly nonexpansive.
    \item We construct implementable manifold-valued layers from tractable families of potentials, with particular attention to hyperbolic manifolds and the manifold of SPD matrices where Busemann functions yield explicit, computationally convenient formulae.
    \item We validate the proposed layers on two numerical tasks, namely adversarially robust classification in the Poincar\'e disk and denoising on the manifold of SPD matrices, and compare them with baseline manifold models. The numerical implementation of the networks and experiments can be found at the associated GitHub repository \href{https://github.com/davidemurari/one-lipschitz-hadamard-networks}{https://github.com/davidemurari/one-lipschitz-hadamard-networks}.
\end{itemize}

\subsection{Outline of the paper}
The paper is organized as follows. After the notation subsection, \Cref{sec:background} presents some background material and introduces the property of quasi $\alpha$-firm nonexpansiveness of certain nonexpansive maps. In \Cref{sec:Busemann layers}, we introduce Busemann functions and the proposed layers, whose nonexpansiveness condition is derived in \Cref{sec:Busemann layers nonexpansive}. The implementation of the architecture is discussed in \Cref{sec:implementation}, then numerical experiments are shown in \Cref{sec:numerical experiments}, and \Cref{sec:conclusion} concludes the paper. Deferred proofs are collected in \Cref{appendix: proofs omitted in the main document}.

\subsection{Notation}
Throughout the paper, $(\mathcal{M},g)$ denotes a smooth Hadamard manifold. We write $T_x\mathcal{M}$ for the tangent space at $x\in\mathcal{M}$.

We write $\langle \cdot,\cdot\rangle_x$ and $\|\cdot\|_x$ for the inner product and norm induced by $g$ on $T_x\mathcal{M}$. When the base point is clear from the context, we write $\langle \cdot,\cdot\rangle$ and $\|\cdot\|$.
The length of a differentiable curve $\gamma:[a,b] \subseteq \mathbb{R} \to \M$ is denoted by
\begin{equation*}
    \ell (\gamma) = \int_a^b \| \dot \gamma (t) \| \dd t,
\end{equation*}
while $d(x,y)$ is the Riemannian (geodesic) distance between points $x$ and $y$ on $\M$. $(\M,d)$ is therefore a metric space. The Levi-Civita connection is denoted by $\nabla$, the Riemannian gradient by $\grad$, and the Riemannian Hessian by $\Hess$.

For $x\in\mathcal{M}$, we denote by $\exp_x:T_x\mathcal{M}\to\mathcal{M}$ the Riemannian exponential map and by $\log_x(y)\in T_x\mathcal{M}$ the inverse exponential map at $x$, well defined for every $y\in\mathcal{M}$ since Hadamard manifolds are globally geodesically convex.

For $\alpha\in[0,1]$ and $x,y\in\mathcal{M}$, the notation
\[
x\#_\alpha y = (1-\alpha)x\oplus \alpha y := \exp_x\bigl(\alpha \log_x y\bigr)
\]
denotes the point at proportion $\alpha$ along the geodesic from $x$ to $y$. We write $\Fix(T)$ for the set of fixed points of a map $T:\mathcal{M}\to\mathcal{M}$, i.e., $\Fix(T):=\{x\in\M:\,\,T(x)=x\}$. We denote by $O(n)$ the set of $n\times n$ real orthogonal matrices.

\section{Background definitions and results}
\label{sec:background}
In this section we present some definitions and results that we use in \Cref{sec:Busemann layers} and \Cref{sec:Busemann layers nonexpansive}. We first define Jacobi fields, which are variation fields of one-parameter families of geodesics \cite[Chapter~10]{lee18itr}. Carrying information about the curvature of the space, they provide a useful tool to analyze the behavior of nearby geodesics. In particular, they can be used to obtain a condition under which geodesics do not spread apart; see \Cref{trm:Ttau nonexpansive}. The second part is dedicated to quasi-$\alpha$-firmly nonexpansive maps, and concludes by deriving such a property for nonexpansive Riemannian gradient-descent maps.

\subsection{Jacobi fields}
Let $\gamma : \mathbb{R} \to \M$ be a unit-speed geodesic. A Jacobi field along $\gamma$ is a vector field satisfying the Jacobi equation
\begin{equation*}
    \nabla^2_t J + R(J, \Dot{\gamma}) \Dot{\gamma} = 0,
\end{equation*}
where $\nabla_{t} = \nabla_{ \Dot{\gamma}(t)}$. Any Jacobi field can be decomposed as
\begin{equation*}
    J(t) = J^\parallel(t) + J^\perp (t) = (\alpha + \beta t) \Dot{\gamma}(t) + J^\perp (t)
\end{equation*}
for some coefficients $\alpha, \beta \in \mathbb{R}$, with $J^\parallel(t)$ \textit{parallel} to $\Dot{\gamma}(t)$, and $J^\perp(t)$ \textit{orthogonal} to $\Dot{\gamma}(t)$, i.e.
\begin{equation*}
    \langle J^\perp(t), \Dot{\gamma}(t) \rangle = 0 \qquad \text{for all } t\in\mathbb{R}.
\end{equation*}
We refer the reader to \cite[Chapter~10]{lee18itr} for a more in-depth study.

\subsection{Quasi-$\alpha$-firmly nonexpansive maps}

A map $T:\M\to\M$ is $1$-Lipschitz, i.e., nonexpansive, if $d(T(x),T(y))\leq d(x,y)$ for every $x,y\in\M$. This constraint does not necessarily lead to a convergent fixed point iteration $x_{k+1}=T(x_k)$, $k\geq 0$. Banach's fixed point Theorem ensures that if $T$ satisfies $d(T(x),T(y))<\beta d(x,y)$, for every $x,y\in\M$ and a $\beta\in (0,1)$, then such an iteration converges to the unique fixed point of $T$. However, this contraction constraint can be overly restrictive in practice. In this subsection, we introduce the necessary mathematical framework to define a broader subclass of nonexpansive maps which leads to convergent fixed point iterations. We start by defining the notion of quasi-$\alpha$-firmly nonexpansive map, and introducing some of the properties of this class of functions.

\begin{definition}\cite[Definition~1]{berdellima2022alpha}
    \label{def: quasi alpha-firmly nonexpansive}
    Let $\mathcal{M}$ be a Hadamard manifold and let $\alpha\in(0,1)$. We say $T:\mathcal{M}\to \mathcal{M}$ is quasi-$\alpha$-firmly nonexpansive if $\Fix(T)$ is non empty, and for every $y\in\Fix(T)$ one has
    \begin{equation*}
        d^2(Tx, y) \le d^2(x,y) - \frac{1-\alpha}{\alpha} d^2 (Tx, x)
    \end{equation*}
    for every $x \in \mathcal{M}$.
\end{definition}

The class of quasi-$\alpha$-firmly nonexpansive maps is constrained enough to ensure the convergence of the fixed point iterations, whenever a fixed point exists, as formalized in the following theorem.

\begin{theorem}
    \label{corollary: convergence to fix point for quasi firmly nonexpansive operators}
    \cite[Corollary 6]{berdellima2020notion}
    Let $\M$ be a finite-dimensional Hadamard manifold, and $T:\M\to \M$ a nonexpansive map. If $T$ is quasi-$\alpha$-firmly nonexpansive then the iterates $x_n := T (x_{n-1})$ for $n\in \mathbb{N}$ converge strongly to some $x^*\in\Fix (T)$.
\end{theorem}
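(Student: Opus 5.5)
The statement is the fixed-point convergence result for nonexpansive, quasi-$\alpha$-firmly nonexpansive maps. I would prove it by a Fejér-monotonicity argument, the standard route from Hilbert spaces adapted to the metric setting of a Hadamard manifold. Fix any $y\in\Fix(T)$; it exists by \Cref{def: quasi alpha-firmly nonexpansive}. Applying the defining inequality at $x=x_{n-1}$ gives
\[
d^2(x_n,y)\le d^2(x_{n-1},y)-\frac{1-\alpha}{\alpha}\,d^2(x_n,x_{n-1}).
\]
Three consequences follow at once. First, the sequence $d(x_n,y)$ is nonincreasing, so it converges to some limit $r(y)\ge 0$. Second, the iterates are bounded. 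Third, telescoping shows
\[
\sum_n d^2(x_n,x_{n-1})\le \frac{\alpha}{1-\alpha}\,d^2(x_0,y)<\infty,
\]
so that $d(Tx_{n-1},x_{n-1})\to 0$. The iteration is therefore asymptotically regular.

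Next I would extract a convergent subsequence. Since $\M$ is a finite-dimensional complete Riemannian manifold, Hopf--Rinow makes closed bounded sets compact. Hence some subsequence satisfies $x_{n_k}\to x^*$. The map $T$ is nonexpansive, hence continuous. Therefore
\[
d(Tx^*,x^*)=\lim_k d(Tx_{n_k},x_{n_k})=0,
\]
so $x^*\in\Fix(T)$. This is the step where finite dimensionality and nonexpansiveness are both used. In infinite-dimensional Hadamard spaces one would instead need $\Delta$-convergence and a demiclosedness principle; here, strong compactness makes this step routine.

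Finally, I would upgrade subsequential convergence to convergence of the whole sequence. Apply the Fejér monotonicity from the first step with the specific fixed point $y=x^*$. The sequence $d(x_n,x^*)$ is nonincreasing, and along the subsequence it tends to $0$. A monotone sequence with a subsequence tending to zero tends to zero, so $x_n\to x^*$.

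I expect the only delicate point to be the bookkeeping that the quasi-firm inequality is used with a fixed point chosen \emph{after} the limit is identified. This is legitimate because the inequality holds for every $y\in\Fix(T)$. No curvature comparison is needed beyond what \Cref{def: quasi alpha-firmly nonexpansive} already encodes.
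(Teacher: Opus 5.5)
Your argument is correct and complete. The paper gives no proof of this statement. It imports it as Corollary~6 of \cite{berdellima2020notion}, where it is the finite-dimensional case of a result about general Hadamard (CAT(0)) spaces. The standard argument in that setting goes through $\Delta$-convergence:
\begin{itemize}
\item Fejér monotonicity and asymptotic regularity follow from the quasi-$\alpha$-firm inequality.
\item A demiclosedness principle for the nonexpansive map $T$ shows that every $\Delta$-cluster point of $(x_n)$ lies in $\mathrm{Fix}(T)$.
\item An Opial-type lemma gives $\Delta$-convergence of the whole sequence.
\item Local compactness upgrades $\Delta$-convergence to convergence in the metric.
\end{itemize}
You replace the middle two steps with something more elementary. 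Hopf--Rinow gives a genuinely convergent subsequence, continuity of $T$ places its limit in $\mathrm{Fix}(T)$, and Fejér monotonicity with respect to that specific fixed point yields convergence of the full sequence. Your version is self-contained and shows exactly which hypothesis does what. The quasi-firm inequality with $\alpha<1$ gives summability of $d^2(x_n,x_{n-1})$. Nonexpansiveness is used only for continuity, which quasi-firmness alone does not supply. Finite dimensionality is used only for compactness. The cited route needs more machinery, but it also covers infinite-dimensional Hadamard spaces, where only $\Delta$-convergence survives.
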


Quasi-$\alpha$-firm nonexpansiveness is closed under composition under some assumptions. Closeness under composition is a condition that simplifies the design of constrained neural networks, since it allows working at the layer level, rather than considering the whole architecture. This is also true for the nonexpansiveness property. We leverage this in the network design process presented in \Cref{sec:implementation}.

\begin{proposition}
    \label{corollary: finite composition of firmly non expansive is firmly nonexpansive}
    \cite[Corollary~13]{berdellima2022alpha}
    Let $\M$ be a Hadamard manifold. Let $T_1 : D_1 \to \M$ where $D_1 \subset \M$, and for $j=2,\dots, m$ let $T_j : D_j \to \M$ where $D_j := \{ T_{j-1}(x) : x\in D_{j-1}\}$. If $T_j$ is quasi-$\alpha$-firmly nonexpansive with constant $\alpha_j$ on $D_j$ for $j=1,\dots,m$, and $\Fix(T_m \circ T_{m-1} \circ \dots \circ T_1) \subset D_1$ is nonempty, then the composite map $T:=T_m \circ T_{m-1} \circ \dots \circ T_1 $ is quasi-$\alpha$-firmly nonexpansive on $D_1$ with constant given recursively by
    \begin{equation*}
        \overline{\alpha}_m = \frac{\overline{\xi}_{m-1} + \xi_m}{\overline{\xi}_{m-1} \xi_m + \overline{\xi}_{m-1} + \xi_m} \qquad \mathrm{for } \,\, m \ge 3,
    \end{equation*}
    where $\overline{\xi}_j := \frac{1-\overline{\alpha}_j}{\overline{\alpha}_j}$ for $j\ge 2$, $\xi_j := \frac{1-\alpha_j}{\alpha_j}$ for $j\ge1$, and $\overline{\alpha}_2 := \frac{\xi_1 + \xi_2}{\xi_1 \xi_2 + \xi_1 + \xi_2}$.
\end{proposition}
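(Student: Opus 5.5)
The plan is to argue by induction on the number $m$ of factors. The whole argument reduces to a two-map composition lemma. Throughout, write $\xi := \frac{1-\alpha}{\alpha}$, so quasi-$\alpha$-firm nonexpansiveness of $T$ reads
\[
d^2(Tx,y)\le d^2(x,y)-\xi\, d^2(Tx,x)\qquad \text{for all } y\in\Fix(T).
\]
The recursion in the statement is exactly what one gets by composing two maps with parameters $\xi_1,\xi_2$ and obtaining a parameter $\bar\xi$ with $\frac{1}{\bar\xi}=\frac{1}{\xi_1}+\frac{1}{\xi_2}$. Indeed, from $\bar\alpha=\frac{\xi_1+\xi_2}{\xi_1\xi_2+\xi_1+\xi_2}$ one computes $\bar\xi=\frac{1-\bar\alpha}{\bar\alpha}=\frac{\xi_1\xi_2}{\xi_1+\xi_2}$. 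So the first step is to verify this algebraic identity, which turns the claim into a statement about the harmonic-type combination of the $\xi_j$.

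\textbf{Two-map step.} Let $T=T_2\circ T_1$ and take $y\in\Fix(T)$. The key point is to show that $y\in\Fix(T_1)\cap\Fix(T_2)$. Set $z:=T_1 y$. By assumption on $T_2$, and since $T_2 z=y$, for any $p\in\Fix(T_2)$ we have $d^2(y,p)\le d^2(z,p)-\xi_2 d^2(y,z)$. Symmetrically, for any $q\in\Fix(T_1)$ we have $d^2(z,q)\le d^2(y,q)-\xi_1 d^2(z,y)$. For a genuinely general statement this requires common fixed points to exist, and that is the place where I would follow the source reference: I would assume, as is standard there, that $\Fix(T_1)\cap\Fix(T_2)\neq\emptyset$, or deduce it from $\Fix(T)\neq\emptyset$. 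I would then show $d(z,y)=0$ by chaining the two inequalities through a common fixed point $p$:
\[
d^2(y,p)\le d^2(z,p)-\xi_2 d^2(y,z)\le d^2(y,p)-(\xi_1+\xi_2)d^2(y,z).
\]
This forces $y=z$, so every $y\in\Fix(T)$ is a common fixed point.

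\textbf{Main inequality.} With $y$ common, for any $x$ set $u=T_1x$. Then
\[
d^2(T x,y)\le d^2(u,y)-\xi_2 d^2(Tx,u)\le d^2(x,y)-\xi_1 d^2(u,x)-\xi_2 d^2(Tx,u).
\]
The term $\xi_1 a^2+\xi_2 b^2$, with $a=d(x,u)$ and $b=d(u,Tx)$, is bounded below using the triangle inequality $d(x,Tx)\le a+b$. By Cauchy--Schwarz,
\[
(a+b)^2\le\Bigl(\tfrac1{\xi_1}+\tfrac1{\xi_2}\Bigr)(\xi_1a^2+\xi_2b^2),
\]
hence $\xi_1a^2+\xi_2b^2\ge\bar\xi\, d^2(x,Tx)$, which is exactly the claimed constant. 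Note this step uses only the metric triangle inequality; no CAT(0) comparison is needed.

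\textbf{Induction and main obstacle.} For $m\ge3$, apply the two-map step to $(T_{m-1}\circ\dots\circ T_1)$ and $T_m$, with the domains $D_j$ tracked as images, which yields the stated recursion for $\bar\alpha_m$. The main obstacle I expect is the fixed-point step. The definition in the paper quantifies only over $\Fix(T_j)$, so showing $\Fix(T)\subset\bigcap_j\Fix(T_j)$ needs nonemptiness of the common fixed-point set. At the induction stage this also means checking that the partial compositions have nonempty fixed-point sets. My first attempt would be to take the hypotheses of \cite[Corollary~13]{berdellima2022alpha} to include this, and otherwise to restrict $y$ to common fixed points.
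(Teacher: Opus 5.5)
The paper gives no proof of this proposition; it only cites \cite[Corollary~13]{berdellima2022alpha}. Your argument is the standard one and it is correct: identify $\mathrm{Fix}(T_2\circ T_1)$ with the common fixed set by chaining the two inequalities through a common fixed point, combine the two quasi-firm inequalities with the triangle inequality and the weighted Cauchy--Schwarz bound, and induct using $D_m=(T_{m-1}\circ\cdots\circ T_1)(D_1)$. It is purely metric and yields $1/\bar\xi_m=\sum_{j}1/\xi_j$, which is exactly the stated recursion.

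The obstacle you flagged has to be settled by \emph{adding} the hypothesis $\bigcap_{j=1}^m\mathrm{Fix}(T_j)\neq\emptyset$. It cannot be deduced from $\mathrm{Fix}(T)\neq\emptyset$, and without it the proposition as literally stated is false. Here is a counterexample on $\mathcal{M}=\mathbb{R}$ with $\alpha_1=\alpha_2=\frac12$, so that $\xi=1$ and the quasi-firm inequality says exactly that $T_jx$ lies between $x$ and the fixed point. Take $T_1(x)=x/2$ for $x\neq 1$ with $T_1(1)=0$, and $T_2(w)=(w+1)/2$ for $w\neq 0$ with $T_2(0)=1$. Then $\mathrm{Fix}(T_1)=\{0\}$ and $\mathrm{Fix}(T_2)=\{1\}\subset D_2$. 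Both maps satisfy the hypotheses on $D_1=\mathbb{R}$ and $D_2=T_1(\mathbb{R})$, and $T_2T_1(1)=T_2(0)=1$, so $1\in\mathrm{Fix}(T_2\circ T_1)$. However, $T_2T_1(9/10)=29/40$, and $d(29/40,1)=11/40>1/10=d(9/10,1)$. Hence $T_2\circ T_1$ is not even quasi-nonexpansive, let alone quasi-$\alpha$-firmly nonexpansive for any $\alpha$.

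Your fallback of restricting $y$ to common fixed points would also not prove the claim, since the definition quantifies over all of $\mathrm{Fix}(T)$. In the example, the offending $y=1$ is precisely a non-common fixed point. The paper itself only uses the proposition under this extra assumption: in the discussion following \Cref{remark:Lie-Trotter} it says composition works ``provided we assume non-trivial intersections of the fixed sets''.

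With $\bigcap_j\mathrm{Fix}(T_j)\neq\emptyset$ assumed, your induction goes through cleanly. At each stage, $\mathrm{Fix}(T_{m-1}\circ\cdots\circ T_1)=\bigcap_{j<m}\mathrm{Fix}(T_j)$ is nonempty and meets $\mathrm{Fix}(T_m)$, so the two-map step applies.
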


On Hadamard manifolds, we can derive a more practical construction of some quasi-$\alpha$-firmly nonexpansive maps. This is formalized in the following theorem.

\begin{theorem}
    \label{trm: alpha averaged operators are quasi alpha FME}
    Let $\mathcal{M}$ be a Hadamard manifold and let $\alpha \in (0,1)$. Let $T_\alpha:\mathcal{M} \to \mathcal{M}$ be defined as
    \begin{equation*}
        T_\alpha (x) = (1-\alpha) x \, \oplus \, \alpha N(x) = \exp_x(\alpha \log_x (N(x))), \qquad x\in\mathcal{M},
    \end{equation*}
    for a nonexpansive map $N:\mathcal{M}\to\mathcal{M}$. If $\Fix(N) \ne \emptyset $, then $T_\alpha$ is quasi-$\alpha$-firmly nonexpansive and $\mathrm{Fix}(T_\alpha)=\mathrm{Fix}(N)$.
\end{theorem}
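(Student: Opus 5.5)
The plan is to use the CAT(0) comparison inequality for geodesics in a Hadamard manifold together with nonexpansiveness of $N$ at fixed points. The key tool is the standard inequality valid in any CAT(0) space (hence any Hadamard manifold): for $x,z,y\in\M$ and $t\in[0,1]$,
\begin{equation*}
    d^2\bigl((1-t)x\oplus t z,\, y\bigr) \le (1-t)\,d^2(x,y) + t\,d^2(z,y) - t(1-t)\,d^2(x,z).
\end{equation*}
This follows from nonpositive curvature via comparison triangles, or equivalently from the $1$-strong geodesic convexity of $\tfrac12 d^2(\cdot,y)$ along the geodesic from $x$ to $z$. I will cite it as a standard fact \cite{bridson2013metric,bacak2014convex}.

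First, I show $\Fix(T_\alpha)=\Fix(N)$. If $N(x)=x$, then $\log_x N(x)=0$, so $T_\alpha(x)=x$. Conversely, if $T_\alpha(x)=x$, then $\exp_x(\alpha\log_x N(x))=x$. Since $\exp_x$ is a diffeomorphism on a Hadamard manifold, this gives $\alpha\log_x N(x)=0$, and since $\alpha>0$, $N(x)=x$. In particular, $\Fix(T_\alpha)$ is nonempty.

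Next, I fix $y\in\Fix(N)$ and $x\in\M$, and apply the inequality with $z=N(x)$ and $t=\alpha$. Nonexpansiveness and $N(y)=y$ give $d(N(x),y)\le d(x,y)$. Hence
\begin{equation*}
    d^2(T_\alpha x, y)\le d^2(x,y) - \alpha(1-\alpha)\,d^2(x,N(x)).
\end{equation*}
Finally, since $T_\alpha x$ lies at proportion $\alpha$ along the geodesic from $x$ to $N(x)$, we have $d(T_\alpha x,x)=\alpha\,d(x,N(x))$. Therefore
\begin{equation*}
    \alpha(1-\alpha)\,d^2(x,N(x))=\frac{1-\alpha}{\alpha}\,d^2(T_\alpha x,x),
\end{equation*}
which is exactly the inequality in \Cref{def: quasi alpha-firmly nonexpansive}.

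The only nontrivial step is justifying the CAT(0) inequality. If a self-contained argument is preferred over a citation, I would argue as follows. Let $\gamma(t)$ be the geodesic from $x$ to $z$ and set $f(t)=d^2(\gamma(t),y)$. The Hessian comparison for the squared distance on nonpositively curved manifolds, obtained through Jacobi fields, gives $\Hess\,\tfrac12 d^2(\cdot,y)\succeq g$. It follows that $f''\ge 2\,d^2(x,z)$, because $\gamma$ has constant speed $d(x,z)$. Integrating this convexity estimate yields the stated bound. The remaining steps are routine.
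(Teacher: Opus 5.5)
Your proposal is correct and follows essentially the same route as the paper: the CAT(0) (uniform-convexity) inequality for $d^2(\cdot,y)$ along the geodesic from $x$ to $N(x)$, nonexpansiveness of $N$ at a fixed point, the identity $d(x,T_\alpha x)=\alpha\, d(x,N(x))$, and injectivity of $\exp_x$ to get $\Fix(T_\alpha)=\Fix(N)$. Your optional Hessian-comparison derivation of the inequality is a valid self-contained substitute for the paper's citation.
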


\begin{proof}
    The proof is similar to 
    the Hilbert space identity for averaged operators, see e.g. \cite[Proposition~2.2]{bacak2014convex}. Let $p\in\Fix(N)$ and $x\in\mathcal{M}$. On Hadamard manifolds, the distance function is uniformly convex \cite[Definition~2.1]{ariza2015asymptotic}, hence
    \begin{equation*}
        d^2(T_\alpha(x), p) \le (1-\alpha) d^2(x,p) + \alpha d^2(N(x), p)
        - \alpha (1-\alpha) d^2(x, N(x)).
    \end{equation*}
    Now $d(N(x),p)=d(N(x),N(p)) \le d(x,p)$ by nonexpansiveness of $N$, and $d(x,T_\alpha(x)) = \alpha d(x, N(x))$ by construction. Therefore,
    \begin{equation*}
        d^2(T_\alpha(x), p) \le d^2(x,p) - \frac{1-\alpha}{\alpha} d^2(x, T_\alpha (x)).
    \end{equation*}
    By the injectivity of the Riemannian exponential and logarithm on Hadamard manifolds, we have that
    \[
    x=T_\alpha(x)\quad \iff \quad \alpha\log_x (N(x)) = 0 \quad \iff N(x)=x,
    \]
    and hence $\Fix(N) = \Fix(T_\alpha)$. This concludes the proof.
\end{proof}

The result above allows us to model quasi-$\alpha$-firmly nonexpansive maps by modeling nonexpansive ones. This is why we focus on building 1-Lipschitz neural networks. Such a construction will be evident in the SPD-inverse problem of \Cref{subsec:spd-inverse}. Additionally, for the gradient-descent-type layers we study in the upcoming sections, we have an even more compatible structure as pointed out by \Cref{trm: Ttau is quasi alpha FNE}.

\begin{theorem}
    \label{trm: Ttau is quasi alpha FNE}
    Let $\mathcal{M}$ be a Hadamard manifold. Let $V:\mathcal{M}\to \mathbb{R}$ be of class $\C^1$, and consider the Riemannian gradient descent step
    \begin{equation*}
        T_\tau(x) = \exp_x(-\tau \grad V(x)),\qquad T_\tau:\M\to \M.
    \end{equation*}
    Assume $T_\tau$ is nonexpansive for $\tau \in [0, \bar \tau]$ and $\Fix(T_\tau) \ne \emptyset$. Then, for every $\tau \in (0,\bar \tau )$, $T_\tau$ is quasi-$\alpha$-firmly nonexpansive for any $ \alpha \in [\tau/ \bar \tau, 1 )$.
\end{theorem}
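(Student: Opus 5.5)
The plan is to write $T_\tau$ as a geodesic average of $x$ and the longer step $T_{\bar\tau}(x)$, and then apply \Cref{trm: alpha averaged operators are quasi alpha FME}. For every $x$, the curve $s\mapsto \exp_x(-s\,\grad V(x))$ is a geodesic starting at $x$. For $\tau\in(0,\bar\tau)$ and $\beta:=\tau/\bar\tau\in(0,1)$ we therefore get
\begin{equation*}
T_\tau(x)=\exp_x\bigl(\beta\,(-\bar\tau\,\grad V(x))\bigr)=\exp_x\bigl(\beta \log_x T_{\bar\tau}(x)\bigr)=(1-\beta)x\oplus \beta\, T_{\bar\tau}(x).
\end{equation*}
The middle equality uses $\log_x(\exp_x v)=v$, which holds for every $v\in T_x\M$ because $\exp_x$ is a global diffeomorphism on a Hadamard manifold. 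So $T_\tau$ has exactly the form in \Cref{trm: alpha averaged operators are quasi alpha FME} with $N=T_{\bar\tau}$ and averaging parameter $\beta$.

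Next I check the hypotheses of that theorem. The map $N=T_{\bar\tau}$ is nonexpansive by assumption, since $\bar\tau$ lies in the closed interval $[0,\bar\tau]$. Its fixed points are easy to identify: $T_s(x)=x$ for some $s>0$ holds iff $-s\,\grad V(x)=0$, by injectivity of $\exp_x$, i.e.\ iff $\grad V(x)=0$. Hence $\Fix(T_s)$ is the set of critical points of $V$ for every $s>0$. In particular $\Fix(T_{\bar\tau})=\Fix(T_\tau)\neq\emptyset$. \Cref{trm: alpha averaged operators are quasi alpha FME} then gives that $T_\tau$ is quasi-$\beta$-firmly nonexpansive with $\beta=\tau/\bar\tau$.

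It remains to pass from $\alpha=\beta$ to every $\alpha\in[\beta,1)$. The defining inequality in \Cref{def: quasi alpha-firmly nonexpansive} has coefficient $\frac{1-\alpha}{\alpha}$, which is decreasing in $\alpha$ on $(0,1)$. So for $\alpha\ge\beta$,
\begin{equation*}
d^2(T_\tau x,y)\le d^2(x,y)-\tfrac{1-\beta}{\beta}d^2(T_\tau x,x)\le d^2(x,y)-\tfrac{1-\alpha}{\alpha}d^2(T_\tau x,x).
\end{equation*}
This is the claim.

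I do not expect a serious obstacle here. The only point needing care is the reparametrisation identity $\log_x T_{\bar\tau}(x)=-\bar\tau\,\grad V(x)$. This needs global invertibility of $\exp_x$, which can fail off Hadamard manifolds because of cut points. It is also the reason the statement asks for nonexpansiveness at the endpoint $\bar\tau$ itself, since that is the map we use as $N$.
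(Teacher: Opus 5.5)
Your proof is correct and follows essentially the paper's route: write the gradient step as the geodesic average $T_{\alpha s}(x)=(1-\alpha)x\oplus\alpha\,T_s(x)$, then invoke \Cref{trm: alpha averaged operators are quasi alpha FME}. The paper differs only in that, for each $\alpha\in[\tau/\bar\tau,1)$, it takes $N=T_{\tau/\alpha}$ and so uses nonexpansiveness on all of $[\tau,\bar\tau]$; you instead fix $N=T_{\bar\tau}$ and use that $(1-\alpha)/\alpha$ is decreasing, which needs nonexpansiveness only at $\bar\tau$ and makes explicit the identity $\Fix(T_s)=\{x:\grad V(x)=0\}$ for all $s>0$, which the paper uses tacitly.
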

\begin{proof}
Fix $\tau\in(0,\bar \tau]$ and $\alpha\in(0,1)$. Define $G_{\alpha, \tau}(x) := (1-\alpha)x \,\oplus\,\alpha T_\tau(x)$. Since $T_\tau$ is nonexpansive and $\Fix(T_\tau) \ne\emptyset$ by assumption, \Cref{trm: alpha averaged operators are quasi alpha FME} implies that $G_{\alpha,\tau}$ is quasi-$\alpha$-firmly nonexpansive. On the other hand, we also have $G_{\alpha, \tau}(x)=T_{\alpha\tau}(x)$ for every $x\in\M$.
Thus $T_{\alpha\tau}$ is quasi-$\alpha$-firmly nonexpansive for any $\tau\in(0,\bar \tau ]$ and any $\alpha\in(0,1)$. 
Now let $\tau'\in(0, \bar \tau)$ and choose $\alpha\in[\tau'/\bar \tau,1)$. Then
\[
\tau:=\frac{\tau'}{\alpha}\in \left(0,\bar \tau \right].
\]
By the previous argument, $T_{\alpha\tau}=T_{\tau'}$ is quasi-$\alpha$-firmly nonexpansive. As $\tau'\in(0,\bar \tau )$ was arbitrary, renaming $\tau'$ as $\tau$ yields the claim.
\end{proof}

To the best of our knowledge, the literature does not provide a result showing the nonexpansiveness of the gradient-descent map $T_\tau$ for a general class of potentials $V$, such as geodesically convex and $L$-smooth potentials. For this reason, in \Cref{sec:Busemann layers} we derive a more specialized construction of $V$ that allows us to prove the nonexpansiveness of the corresponding Riemannian gradient-descent step. We then use such a result to design our nonexpansive manifold-valued neural networks. The analysis of the more general case remains open, and could expand the range of $1$-Lipschitz manifold-valued networks one can design following our principles.

\section{Busemann gradient-descent layers}
\label{sec:Busemann layers}
In $\mathbb{R}^n$ equipped with the Euclidean metric, one can consider layers of the form
\begin{equation*}
    x \mapsto x - \tau \nabla V(x) = x - \tau A^\top \sigma(Ax + b),
    \qquad
    x,b \in \mathbb{R}^n,\, A\in\mathbb{R}^{n\times n},\,\tau\geq 0,
\end{equation*}
where $\sigma$ is a scalar activation function, $V(x) = 1^\top_n\varphi(A x+b)$, $\varphi(\tau) = \int_0^\tau \sigma(s) \text{d}s$, and $1_n\in\mathbb{R}^n$ is a vector of ones. Assuming $\sigma$ is non-decreasing and $L$-Lipschitz, the above map is 1-Lipschitz and $\alpha$-averaged whenever $\tau \in (0, 2/(L\|A\|_2^2))$, with $\alpha=\tau\|A\|_2^2L/2$ \cite{sherry24dsn}. We recall that $T:\mathbb{R}^n\to\mathbb{R}^n$ is $\alpha$-averaged if $T(x)=(1-\alpha)x+\alpha N(x)$ for a 1-Lipschitz map $N:\mathbb{R}^n\to\mathbb{R}^n$, as considered for the construction in \Cref{trm: alpha averaged operators are quasi alpha FME}.

In this section, we consider Riemannian descent gradient layers on Hadamard manifolds. We propose a parametrization of the potential $V$ based on Busemann functions, thereby obtaining a concrete class of nonexpansive models on Hadamard manifolds for which Busemann functions and their Riemannian gradients can be efficiently implemented. We first provide the basic notions and properties about such a family of functions, and refer the reader to \cite[Chapter~II.8]{bridson2013metric} for a more in-depth study. We then present the Busemann gradient-descent layers and show that they are 1-Lipschitz under a certain condition on the stepsize (\Cref{trm:Ttau nonexpansive}). 

Let $p\in\M$, $\xi\in\partial\M$ a point at infinity and $\gamma : [0,+\infty) \to \mathcal{M}$ the (unique) unit-speed geodesic ray from $p$ to $\xi$. Let $b$ denote the Busemann function associated to $\gamma$ defined by
\begin{equation}
    \label{eq: busemann on manifold}
    b(x) = \lim_{s\to+\infty} (d(\gamma(s),x)-s).
\end{equation}
When we want to remark the associated geodesic ray or point at infinity, we interchangeably use the notations $b_\gamma$ and $b_\xi$.

The existence of the limit in \eqref{eq: busemann on manifold} is guaranteed by \cite[Chapter II.8, Lemma~8.18]{bridson2013metric}. The level sets of $b$ are called horospheres: for $t\ge 0$
\begin{equation}
    \label{eq: horosphere}
    H_{\gamma(t)} := \{x \in \M : b(x) = b(\gamma(t))\}
\end{equation} 
is the horosphere at $\gamma(t)$, a smooth hypersurface orthogonal to $\gamma$ at $\gamma(t)$.

\begin{lemma}
    \label{lemma: properties of Busemann functions on Hadamard manifolds}
    Let $(\mathcal{M}, g)$ be a Hadamard manifold. Every Busemann function $b$ on $\M$ is geodesically convex, 1-Lipschitz, and of class $\C^{2}$. 
\end{lemma}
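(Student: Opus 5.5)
We prove the three properties separately, starting from the limit definition \eqref{eq: busemann on manifold} for the two metric properties and then turning to differential-geometric arguments for regularity.

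\emph{1-Lipschitz.} For each $s$, set $f_s(x):=d(\gamma(s),x)-s$. By the triangle inequality, $|f_s(x)-f_s(y)|\le d(x,y)$, so each $f_s$ is 1-Lipschitz. This bound is preserved under the pointwise limit $s\to+\infty$, so $b$ is 1-Lipschitz.

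\emph{Geodesic convexity.} On a Hadamard manifold, the distance function $x\mapsto d(z,x)$ to a fixed point $z$ is geodesically convex. This is the CAT(0) convexity of the metric, or equivalently the uniform convexity of $d^2$ already used in the proof of \Cref{trm: alpha averaged operators are quasi alpha FME}. Hence each $f_s$ is convex along every geodesic $c$, in the sense that $f_s(c(t))\le(1-t)f_s(c(0))+tf_s(c(1))$. A pointwise limit of convex functions is convex, so $b$ is geodesically convex.

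\emph{$\C^2$ regularity.} This is the main obstacle. We will cite the classical result of Heintze--Im Hof (\emph{Geometry of horospheres}, J. Differential Geom. 1977) that Busemann functions on Hadamard manifolds are $\C^2$, and sketch the argument as follows.
\begin{itemize}
\item Each $f_s$ is smooth away from $\gamma(s)$, with $\grad f_s(x)=-\log_x\gamma(s)/\|\log_x\gamma(s)\|$, a unit vector.
\item One first shows that these gradients converge locally uniformly as $s\to\infty$. The limit is the unit vector field $-v(x)$, where $v(x)$ is the initial velocity of the unique geodesic ray from $x$ asymptotic to $\xi$. The convergence follows from comparison (CAT(0)) estimates on the angles between $\log_x\gamma(s)$ and $\log_x\gamma(s')$. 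Hence $b$ is $\C^1$ with $\|\grad b\|=1$.
\item For the second derivatives, $\Hess f_s$ restricted to the orthogonal complement of $\grad f_s$ is the second fundamental form of the distance sphere $S(\gamma(s),f_s(x)+s)$. It is expressed through the solution of a matrix Riccati equation along the geodesic from $\gamma(s)$ to $x$.
\item Since $K\le0$, the Rauch/Riccati comparison shows that these shape operators are positive semidefinite and monotone in $s$. They are also locally bounded, with the bound coming from a lower curvature bound on compact sets.
\item Consequently they converge locally uniformly to the stable Riccati solution, namely the shape operator of the horosphere. This convergence gives $b\in\C^2$.
\end{itemize}

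Since this last part is standard but technical, in the paper we will present the first two properties in full and refer to Heintze--Im Hof, and to \cite[Chapter~II.8]{bridson2013metric} for the convergence of the rays, for the $\C^2$ statement.
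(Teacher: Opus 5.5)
Your proposal is correct and follows the paper's proof: the 1-Lipschitz bound comes from the triangle inequality, convexity is inherited from the convex functions $d(\gamma(s),\cdot)-s$ (which is what the paper imports from \cite[Proposition~8.22]{bridson2013metric}), and $\mathcal{C}^2$ regularity is taken from Heintze--Im Hof. One small caveat on your optional sketch: the monotone-limit argument is normally run with spheres centred on the ray from $x$ asymptotic to $\xi$, so that all shape operators act on the fixed subspace $v(x)^\perp$, whereas with centres $\gamma(s)$ the normals at $x$ vary with $s$; since you defer this step to the literature exactly as the paper does, this does not affect correctness.
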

The above result is quite standard, and we include a proof in \Cref{appendix: proofs omitted in the main document}. \Cref{lemma: phi coincides with exp} below shows that Busemann flows, i.e., Riemannian gradient flows of Busemann functions, generate unit-speed geodesics, often referred to as Busemann geodesics. This property plays a central role in showing \Cref{trm:Ttau nonexpansive}.
\begin{lemma}
    \label{lemma: phi coincides with exp}
    Let $(\mathcal{M}, g)$ be a Hadamard manifold, and $b$ a Busemann function on $\mathcal{M}$. Let $x\in\M$, and let $(\Phi_t)_{t\geq 0}$ denote the Busemann flow given by
    \begin{equation}
        \label{eq: Busemann flow phi}
        \frac{\dd}{\dd t}\Phi_t (x)=-\grad b(\Phi_t (x)), \qquad \Phi_0 (x)=x.
    \end{equation}
    Then
    \begin{equation*}
        \Phi_t(x) = \exp_x(-t \grad b(x))
    \end{equation*}
    for every $t\ge0$, and $(t,x)\mapsto \Phi_t (x)$ is $\C^1$ jointly in $(t,x)$. Moreover, $b(\Phi_t(x)) = b(x) - t$, and
    \begin{equation*}
        \Hess b_{\Phi_t(x)} (\grad b(\Phi_t(x)), \grad b(\Phi_t(x))) = 0.
    \end{equation*}
\end{lemma}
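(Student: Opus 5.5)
The plan is to show that the integral curves of $-\grad b$ are exactly the unit-speed geodesics that are asymptotic to $\xi$. The identity $\Phi_t(x)=\exp_x(-t\grad b(x))$ then follows by uniqueness of solutions of the flow ODE. By \Cref{lemma: properties of Busemann functions on Hadamard manifolds}, $b$ is $\C^2$, so $\grad b$ is a $\C^1$ vector field. The flow \eqref{eq: Busemann flow phi} is therefore well defined and unique, and standard ODE theory makes it $\C^1$ jointly in $(t,x)$ wherever it exists.

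\textbf{Step 1: $\|\grad b\|\equiv 1$ and the gradient points along the asymptotic ray.} Fix $x\in\M$. Let $\gamma_s$ be the geodesic from $x$ to $\gamma(s)$, and $v_s=\log_x\gamma(s)/\|\log_x\gamma(s)\|$ its unit initial direction. By compactness of the unit sphere in $T_x\M$, together with the standard construction of the asymptotic ray in \cite[Chapter~II.8]{bridson2013metric}, $v_s\to v$, where $c_x(t)=\exp_x(tv)$ is the unique ray from $x$ asymptotic to $\gamma$.

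Along this ray we have $b(c_x(t))=b(x)-t$. To see this, note that $d(\gamma(s),c_x(t))-s$ converges to $b(c_x(t))$. Comparing with $d(\gamma(s),x)-s$ via convexity of the distance and the asymptotic property gives exactly the decrement $t$; this is the standard identity $b_\gamma = b_{c_x}+b(x)$ for asymptotic rays.

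Since $b$ is 1-Lipschitz, $\|\grad b(x)\|\le 1$. Differentiating $b(c_x(t))$ at $t=0$ gives $\langle \grad b(x),v\rangle=-1$. Cauchy–Schwarz then forces $\grad b(x)=-v$, with unit norm.

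\textbf{Step 2: the flow is geodesic.} The curve $\beta(t)=\exp_x(-t\grad b(x))=c_x(t)$ is a geodesic ray asymptotic to $\xi$. For each $t_0$, the restriction $t\mapsto \beta(t_0+t)$ is the asymptotic ray from $\beta(t_0)$. Applying Step 1 at $\beta(t_0)$ gives $-\grad b(\beta(t_0))=\dot\beta(t_0)$. Hence $\beta$ solves \eqref{eq: Busemann flow phi}, and by uniqueness $\Phi_t(x)=\beta(t)$ for all $t\ge0$. In particular the flow is globally defined, and $b(\Phi_t(x))=b(x)-t$ follows from Step 1.

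\textbf{Step 3: the Hessian identity.} Write $y=\Phi_t(x)$ and $u=\grad b(y)$. Since $\Phi_t(x)$ is a geodesic with velocity $-\grad b$, we get
\[
0=\tfrac{\dd^2}{\dd t^2}b(\Phi_t(x))=\Hess b_{y}(u,u)+\langle \grad b(y),\nabla_t\dot\Phi_t\rangle=\Hess b_y(u,u),
\]
where the last term vanishes because $\nabla_t\dot\Phi_t=0$. Alternatively, $\Hess b_y(u,\cdot)=\tfrac12 \dd(\|\grad b\|^2)=0$ since $\|\grad b\|\equiv1$.

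\textbf{Main obstacle.} The delicate point is Step 1, namely that $b$ decreases at exact unit rate along the asymptotic ray from an arbitrary $x$, not just from $p$. I would handle it through the standard fact that Busemann functions of asymptotic rays differ by a constant. I would cite \cite[Chapter~II.8, Corollary~8.20]{bridson2013metric} if available, and otherwise prove it directly from the convexity of $s\mapsto d(\gamma(s),c_x(s))$, which is bounded and hence nonincreasing, together with the triangle inequality.
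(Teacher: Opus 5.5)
Your proof is correct. It follows the same skeleton as the paper's proof: flow lines of $-\grad b$ are the unit-speed geodesics asymptotic to $\xi$, uniqueness identifies them with $t\mapsto\exp_x(-t\grad b(x))$, and differentiating $b$ along the flow gives the remaining claims. The difference is that you prove the step the paper only cites.

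The paper takes from \cite{criscitiello2025horospherically} that the flow line through $x$ is a unit-speed geodesic, then applies uniqueness of geodesics with prescribed initial data. You instead show directly that $-\grad b(x)$ is the initial velocity of the asymptotic ray $c_x$, using $b(c_x(t))=b(x)-t$, the 1-Lipschitz bound and Cauchy--Schwarz. So $c_x$ is an integral curve, and ODE uniqueness for the $\C^1$ field $-\grad b$ finishes the identification. What this buys you:
\begin{itemize}
\item the argument is self-contained modulo one standard CAT(0) fact;
\item it derives $\|\grad b\|\equiv 1$ rather than importing it from the Gauss lemma;
\item global existence of the flow comes for free;
\item your alternative for Step 3, $\Hess b(\grad b,\cdot)=\frac12\,\dd(\|\grad b\|^2)=0$, makes explicit the identity $\nabla_{\grad b}\grad b=0$, the form actually used in version 2 of the proof of \Cref{trm:Ttau nonexpansive}.
\end{itemize}
The price is length and reliance on the asymptotic-ray identity.

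Your fallback proof of that identity does not close as worded. Boundedness (hence monotonicity) of $d(\gamma(s),c_x(s))$ plus the triangle inequality only gives $b(c_x(t))\le D_\infty-t$, with $D_\infty=\lim_s d(\gamma(s),c_x(s))$. In general $D_\infty\neq b(x)$: already in $\mathbb{R}^n$ with $\gamma(s)=p+su$ one has $D_\infty=\|x-p\|_2$ but $b(x)=\langle u,p-x\rangle$.

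The fix uses only what you already have. Set $g(t):=b(c_x(t))+t$. Then:
\begin{itemize}
\item $g$ is convex, because $b$ is geodesically convex;
\item $g$ is nondecreasing, because $b$ is 1-Lipschitz;
\item $g$ is bounded above by $\sup_t d(\gamma(t),c_x(t))<\infty$, because $b(\gamma(t))=-t$.
\end{itemize}
A convex, nondecreasing function bounded above on $[0,\infty)$ is constant, so $g\equiv g(0)=b(x)$. This also makes the limit $v_s\to v$ and uniqueness of asymptotic rays unnecessary, since your Cauchy--Schwarz step works for any unit-speed ray from $x$ asymptotic to $\gamma$.
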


\begin{proof}
    The flow line $t\mapsto \Phi_t(x)$ is a unit-speed geodesic asymptotic to $\xi$, see \cite[Section~2.3]{criscitiello2025horospherically}, with initial data $\Phi_0(x) = x$, $ \dot \Phi_0(x) = -\grad b(x)$. On the other hand, the curve $t\longmapsto \exp_x(-t \grad b(x)) =: \eta(t)$ is by definition the unique geodesic with initial conditions $\eta(0) = x$, $\dot \eta (0)=-\grad b(x)$. By uniqueness of geodesics with prescribed initial data, the two curves must coincide for every $t\ge0$. \Cref{lemma: properties of Busemann functions on Hadamard manifolds} guarantees that $b$ is $\C^2$, hence the flow map $(t,x)\mapsto \Phi_t(x)$ is $\C^1$ jointly in $(t,x)$. \\
    Consider now the derivative of $b(\Phi_t(x))$ with respect to $t$:
    \begin{equation*}
        \frac{\dd}{\dd t}b(\Phi_t(x)) = \langle \grad b(\Phi_t(x)), -\grad b(\Phi_t(x)) \rangle = -\|\grad b(\Phi_t(x))\|^2 = -1.
    \end{equation*}
    Integrating the above yields $b(\Phi_t(x)) = b(x) - t$, while deriving we find
    \begin{align*}
        \frac{\dd^2}{\dd t^2} b(\Phi_t(x)) &= \Hess b_{\Phi_t(x)} (\Dot{\Phi}_t(x), \Dot{\Phi}_t(x)) \\
        &=
        \Hess b_{\Phi_t(x)} (\grad b(\Phi_t(x)), \grad b(\Phi_t(x))) = 0,
    \end{align*}
    i.e., the Hessian of $b$ vanishes in the directions parallel to $\Dot{\gamma}$ (i.e., to $\grad b(x)$).
\end{proof}

We now use the theory of Busemann functions and their gradient flows to design nonexpansive neural networks on Riemannian manifolds. We briefly introduce these layers now, and then study their properties in \Cref{sec:Busemann layers nonexpansive}.

\paragraph{Definition of the Busemann gradient-descent layers} Let $\gamma$ be a geodesic ray on $\M$, $b$ the Busemann function generated by $\gamma$, $\lambda \ge 0$, $\beta\in\mathbb{R}$, $\varphi:\mathbb{R}\to\mathbb{R}$ of class $\mathcal{C}^1$ and consider the potential
\begin{equation}
    \label{eq:potential}
    V(x)=\varphi(\lambda b(x)+\beta), \qquad
    \grad V(x) = \lambda\varphi'(\lambda b(x)+\beta) \grad b(x).
\end{equation}
The Busemann (Riemannian) gradient-descent layer is defined by
\begin{equation}
    \label{eq:Busemann gradient layer}
    T_\tau(x)=\exp_x(-\tau\grad V(x)), \qquad \tau \ge 0.
\end{equation}
It follows from \Cref{lemma: phi coincides with exp} that, for every fixed base point \(x\in \M\),
\begin{equation}
    \label{eq:single-busemann-gradient-step}
    T_\tau(x) = \Phi_{a(b(x))}(x),
\end{equation}
where $a(r):=\tau \lambda\varphi'(\lambda r+\beta)$, $\Phi$ is the Busemann flow as in \eqref{eq: Busemann flow phi}, and the variable flow time is evaluated pointwise through $a(b(x))$. We can interpret this as a time-reparametrization of Busemann gradient flows.

\section{Nonexpansiveness of Busemann gradient-descent layers}
\label{sec:Busemann layers nonexpansive}
In this section, we present the main theoretical result of our work, \Cref{trm:Ttau nonexpansive}, which provides a condition on the stepsize ensuring that the Busemann gradient-descent layer in \Cref{eq:Busemann gradient layer} is 1-Lipschitz. We present two versions of the proof. The first one relies only on basic properties of Busemann functions on Hadamard manifolds, and provides a strategy to prove the product-manifold extension presented in \Cref{trm:product-general}. The second one uses the construction in, e.g., \cite[Theorems~2.2,3.1]{arnold24bso}, \cite[Theorems~6,9]{ghirardelli2025conditional} based on variations through geodesics and Jacobi fields, providing a more geometric interpretation. 

\begin{theorem}
    \label{trm:Ttau nonexpansive}
    Let $(\M,g)$ be a Hadamard manifold and $b$ a Busemann function on $\M$. Let $\lambda \ge 0$, $\tau\geq 0$, $\beta\in\mathbb{R}$, and $\varphi : \mathbb{R}\to \mathbb{R} $ be a $\mathcal{C}^{1}$ function such that $\varphi'$ is globally Lipschitz continuous, i.e. $\varphi\in \mathcal{C}^{1,1}$. Assume
    \begin{equation*}
        \varphi'(s)\geq 0 \qquad\text{for all }s\in \mathbb R,    
    \end{equation*}
    and
    \begin{equation}
        \label{eq: condition for nonexpansiveness}
        0\leq \tau \lambda^2 \varphi''(s)\leq 2 \qquad\text{for a.e. }s\in \mathbb R,
    \end{equation}
    where $\varphi''$ denotes the a.e. derivative of the Lipschitz function $\varphi'$. Then, the map $T_\tau$ in \eqref{eq:Busemann gradient layer} is nonexpansive on $\M$.
\end{theorem}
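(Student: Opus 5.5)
Write $T_\tau(x)=\Phi_{a(b(x))}(x)$ as in \eqref{eq:single-busemann-gradient-step}, with $a(r)=\tau\lambda\varphi'(\lambda r+\beta)\ge 0$. By the hypotheses, $a$ is non-decreasing and Lipschitz, with $0\le a'(r)=\tau\lambda^2\varphi''(\lambda r+\beta)\le 2$ for a.e. $r$. Fix $x,y\in\M$ and set $s=a(b(x))$, $t=a(b(y))$. We may assume $b(x)\le b(y)$, so that $s\le t$, and write $\delta:=b(y)-b(x)\ge 0$. Then $t-s\le 2\delta$ by the mean value inequality for Lipschitz functions.

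The first step is to reduce to a comparison along flow lines. Busemann flows are nonexpansive for equal times, since $\Phi_s$ is the gradient flow of the convex function $b$; on a Hadamard manifold, $d(\Phi_s(x),\Phi_s(y))$ is non-increasing because $\langle -\grad b(u)+\grad b(v),\ \log_u v\rangle$ has the right sign, by geodesic convexity (\Cref{lemma: properties of Busemann functions on Hadamard manifolds}). Hence
\[
d(T_\tau x,T_\tau y)=d(\Phi_s(x),\Phi_{s+(t-s)}(y))=d(\Phi_s(x),\Phi_{h}(\Phi_s(y))),\qquad h:=t-s\in[0,2\delta].
\]
Put $x'=\Phi_s(x)$ and $y'=\Phi_s(y)$. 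We have $d(x',y')\le d(x,y)$, and $b(y')-b(x')=\delta$ by \Cref{lemma: phi coincides with exp}. It therefore suffices to show the following claim: if $b(y')-b(x')=\delta\ge0$ and $h\in[0,2\delta]$, then $d(x',\Phi_h(y'))\le d(x',y')$.

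For the claim, consider $f(h):=\tfrac12 d^2(x',\Phi_h(y'))$. Since $h\mapsto\Phi_h(y')$ is a unit-speed geodesic (\Cref{lemma: phi coincides with exp}), $f$ is convex, because $d^2(x',\cdot)$ is geodesically convex on a Hadamard manifold. It then suffices to check $f(2\delta)\le f(0)$. For this I would use the Busemann function as a comparison. Let $z=\Phi_h(y')$. Since $b$ is $1$-Lipschitz, $d(x',z)\ge |b(z)-b(x')|=|\delta-h|$, but this lower bound does not help directly. Instead, I would use comparison with the Euclidean case: in a CAT(0) space, the geodesic from $y'$ to $\xi$ together with $x'$ gives an ideal triangle whose comparison is the flat half-strip. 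In $\mathbb{R}^n$ with $b(u)=-\langle u,e\rangle$, one computes $d^2(x',y'+he)=d^2(x',y')-2h\delta+h^2\le d^2(x',y')$ for $h\le 2\delta$. The intended inequality in $\M$ is therefore
\[
d^2(x',\Phi_h(y'))\le d^2(x',y')-2h\,\delta+h^2 .
\]
I would prove this by differentiating $f$. We have $f'(h)=\langle -\grad b(z_h),-\log_{z_h}x'\rangle=\langle \grad b(z_h),\log_{z_h}x'\rangle$, and convexity of $b$ along the geodesic from $z_h$ to $x'$ gives $b(x')\ge b(z_h)+\langle\grad b(z_h),\log_{z_h}x'\rangle$. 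Hence $f'(h)\le b(x')-b(z_h)=-\delta+h$. Integrating from $0$ to $h$ yields exactly $f(h)\le f(0)-h\delta+h^2/2$, which is the displayed inequality, and this is at most $f(0)$ for $h\in[0,2\delta]$.

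The main obstacle I expect is justifying the equal-time contraction step and the differentiability of $d^2(x',\cdot)$ at $z_h$. The latter is in fact smooth on Hadamard manifolds, with gradient $-\log$. The equal-time step is standard monotonicity of gradients of convex functions, $\langle\grad b(u)-P\grad b(v),\log_u v\rangle\le 0$ appropriately transported. Alternatively, one can avoid the equal-time step altogether: apply the derivative argument directly to $g(t)=\tfrac12 d^2(\Phi_{s}(x),\Phi_{t}(y))$ and separately to the joint equal-time flow. The case $\delta=0$ gives $s=t$ and is covered by the equal-time step alone.
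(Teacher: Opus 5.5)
Your proof is correct, and it takes a genuinely different route from the paper's.

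\textbf{The paper's route.} The paper argues infinitesimally. It pushes the minimizing geodesic $\gamma$ from $x$ to $y$ through $T_\tau$ and differentiates $s\mapsto\Phi_{a(b(\gamma(s)))}(\gamma(s))$. This needs the Lipschitz chain rule for $a\circ b\circ\gamma$, the identity $D\Phi_t(\grad b)=\grad b\circ\Phi_t$, and the bound $\|D\Phi_t\eta\|\le\|\eta\|$. It then splits $\dot\gamma$ into its $\grad b$-component and a horospherical part, so that $0\le a'\le 2$ enters as $|1-a'|\le 1$ on the $\grad b$-component. Version 2 redoes this with Jacobi fields under $\C^2$ regularity.

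\textbf{Your route.} You give a global two-point argument. You split the flow times into the common time $\min\{s,t\}$, handled by equal-time contraction, and the excess $h$. The excess is handled by the comparison estimate $d^2(x',\Phi_h(y'))\le d^2(x',y')-2h\delta+h^2$, which follows from the first-order convexity inequality for $b$ together with $b\circ\Phi_h=b-h$. The stepsize condition then enters only in its integrated form $0\le a(b(y))-a(b(x))\le 2\,(b(y)-b(x))$, i.e.\ nonexpansiveness of $r\mapsto r-a(r)$.

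\textbf{What each approach buys.} Your approach never differentiates $T_\tau$ or the flow with respect to its initial point. It also yields the quantitative bound $d^2(T_\tau x,T_\tau y)\le d^2(x,y)-h(2\delta-h)$. This is an equality in the flat case and on the sharpness example of Appendix~\ref{app:calculations-tight-bound}. Moreover, your comparison estimate never uses the sign of $\delta$ and is symmetric in the ordering of the two times. It can therefore be written as $d^2(\Phi_s x,\Phi_t y)\le d^2(x,y)-\delta^2+\bigl(\delta-(t-s)\bigr)^2$ for all $s,t\ge 0$, and summing over components gives \Cref{trm:product-general} at once. The paper's route, in exchange, gives a pointwise bound on the differential of $T_\tau$ and the Jacobi-field interpretation.

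\textbf{Loose spots.} The equal-time step as written is imprecise: $\langle -\grad b(u)+\grad b(v),\log_u v\rangle$ mixes tangent spaces. With $u=\Phi_s x$ and $v=\Phi_s y$, the clean computation is $\frac{d}{ds}\frac12 d^2(\Phi_s x,\Phi_s y)=\langle\grad b(u),\log_u v\rangle+\langle\grad b(v),\log_v u\rangle\le\bigl(b(v)-b(u)\bigr)+\bigl(b(u)-b(v)\bigr)=0$. This is the same convexity inequality you already use for the claim. The half-strip heuristic in the middle is superseded by the derivative argument and can be dropped.
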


Before proving the theorem, we provide an illustrative example of the bound and its tightness in \Cref{fig:tight-nonexpansive-bound}. More details on the calculations are in Appendix \ref{app:calculations-tight-bound}.
\begin{figure}[h!]
    \centering
    \includegraphics[width=0.8\linewidth]{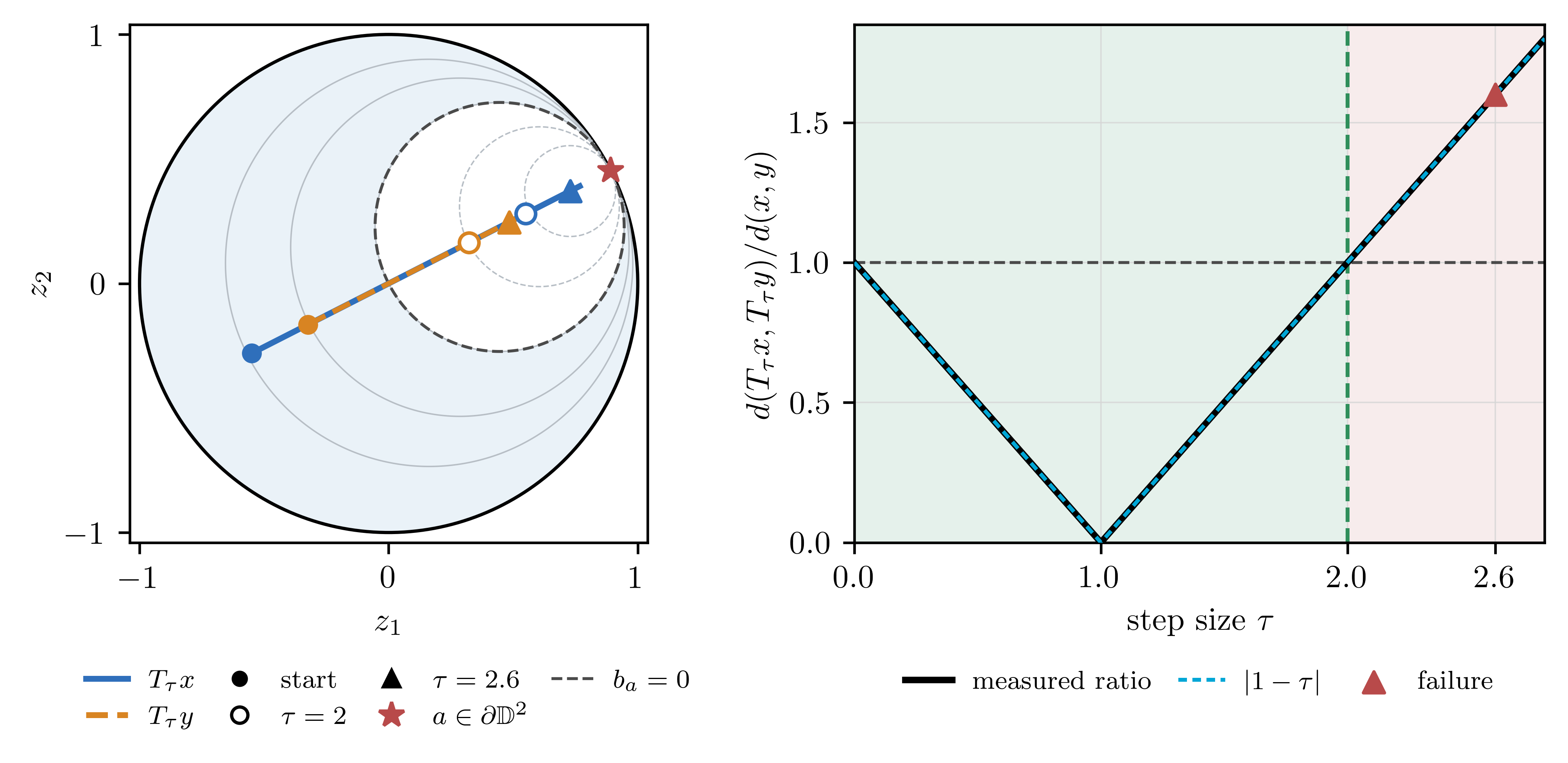}
    \caption{Sharpness example for a single Busemann layer on $\mathbb D^2$. For $V(x)=\frac12\operatorname{ReLU}(b_P(x))^2$, choose two active points $P_i=-r_iP$ on the diameter opposite $P$.  The update $T_\tau(x)=\Phi_{\tau\operatorname{ReLU}(b_P(x))}(x)$ reparametrizes the Busemann flow by the value of $b_P$, so the point with larger $b_P$ moves farther and the two trajectories cross.  On this pair, $
d(T_\tau(P_1),T_\tau(P_2))/d(P_1,P_2)=|1-\tau|$, showing nonexpansiveness for $0\le\tau\le2$ and failure beyond the bound.}
    \label{fig:tight-nonexpansive-bound}
\end{figure}

\begin{proof}[Proof of \Cref{trm:Ttau nonexpansive} (version 1)]
    If $\tau=0$, or $\lambda=0$, then $T_\tau=\mathrm{Id}$, and there is nothing to prove. Hence, we assume $\tau \lambda>0$ and that $\varphi$ is not constant. Define
    \begin{equation*}
        a(r) := \tau \lambda \varphi'(\lambda r+\beta).
    \end{equation*}
    Since $\varphi'$ is Lipschitz, the scalar map $a$ is Lipschitz on $\mathbb R$. Moreover, 
    $a'(r) = \tau \lambda^2 \varphi''(\lambda r+\beta)$ for a.e. $r\in\mathbb{R}$. 
    Hence
    \begin{equation*}
        0\leq a'(r)\leq 2 \qquad\text{for a.e. }r\in \mathbb R
    \end{equation*}
    whenever $\tau$ satisfies \eqref{eq: condition for nonexpansiveness}. Let $\gamma:[0,1]\to \M$ be the (unique) length-minimising geodesic segment from $x$ to $y$, i.e., $\gamma(0)=x$, $\gamma(1)=y$, and $\ell(\gamma)=d(x,y)$. Then, since
    \begin{equation*}
        d(T_\tau(x), T_\tau(y)) \leq \ell (T_\tau\circ \gamma),
    \end{equation*}
    it suffices to show that $\ell (T_\tau\circ \gamma)\leq \ell (\gamma)$. Define $w(s) := b(\gamma(s))$. Since $(t,x)\mapsto\Phi_t(x)$ is $\mathcal C^1$, the curve $s\mapsto \Phi_{a(w(s))}(\gamma(s))=T_\tau(\gamma(s))$ is absolutely continuous. Therefore, it is enough to show that $\|(T_\tau\circ \gamma)'(s)\| \leq \|\dot{\gamma}(s)\|$ for a.e. $s\in [0,1]$ so that
    \begin{equation*}
        \ell (T_\tau\circ \gamma) = \int_0^1 \left\| (T_\tau\circ \gamma)'(s) \right\| \dd s \leq \int_0^1 \|\dot{\gamma}(s)\| \dd s = \ell (\gamma).
    \end{equation*}
    It follows from \Cref{lemma: phi coincides with exp} that $T_\tau(\gamma(s)) = \Phi_{a(b( \gamma(s)))}(\gamma(s))$, where $(\Phi_t)_{t\geq 0}$ is the Busemann flow in \eqref{eq: Busemann flow phi}. Since $w$ is $\mathcal C^1$ and $a$ is Lipschitz, the
    curve $a\circ w$ is absolutely continuous. By the one-dimensional chain rule for Lipschitz functions \cite[Corollary~3.68]{leoni2017first}, $(a\circ w)'(s)=a'(w(s))w'(s)$ for a.e. $s\in[0,1]$. Therefore, for almost every $s\in [0,1]$, one has
    \begin{equation*}
        (T_\tau \circ \gamma)'(s) = D\Phi_{a(w(s))}(\dot{\gamma}(s)) + a'(w(s))\,w'(s)\, \frac{\partial}{\partial t}\Phi_t(\gamma(s))\bigg|_{t=a(w(s))}.
    \end{equation*}
     We have $w'(s) = \langle \grad b(\gamma(s)), \dot{\gamma}(s) \rangle$, i.e. $w'(s)$ is the component of $\dot{\gamma}(s)$ in the direction of $\grad b(\gamma(s))$. For every $s\in [0,1]$, let $\xi_H(s) \in \mathrm{span}\{\grad b(\gamma(s))\}^\perp$ be the orthogonal component so that
    \begin{equation*}
        \dot{\gamma}(s) = w'(s) \cdot \grad b(\gamma(s)) + \xi_H(s).
    \end{equation*}
    Then,
    \begin{equation*}
        \|\dot{\gamma}(s)\|^2 = |w'(s)|^2 + \|\xi_H(s)\|^2.
    \end{equation*}
    By the properties of pushforward maps, $D\Phi_t(\grad b(\cdot)) = \grad b(\Phi_t(\cdot))$ and hence 
    \begin{equation*}
        D\Phi_{a(b(\gamma(s)))}(\grad b(\gamma(s))) = \grad b(\Phi_{a(b(\gamma(s)))}(\gamma(s))) = \grad b(T_\tau(\gamma(s))).
    \end{equation*}
    Similarly,
    \begin{equation*}
        \frac{\partial}{\partial t}\Phi_t(\gamma(s))\bigg|_{t=a(b(\gamma(s)))}  = -\grad b(\Phi_{a(b(\gamma(s)))}(\gamma(s))) = -\grad b(T_\tau(\gamma(s))).
    \end{equation*}
    Putting everything together, we have
    \begin{align*}
        (T_\tau \circ \gamma)'(s) &= D\Phi_{a(b(\gamma(s)))}(\dot{\gamma}(s)) - a'(w(s)) w'(s) D\Phi_{a(b(\gamma(s)))}(\grad b(\gamma(s)))\\
        &= D\Phi_{a(b(\gamma(s)))}\Big(\dot{\gamma}(s)-a'(w(s)) w'(s)\grad b(\gamma(s))\Big).
    \end{align*}
    Since $\dot\Phi_t=X\circ\Phi_t$ with $X=-\grad b$, and since
    $b\in \mathcal C^2$ is convex, we have
    \[
        \langle \nabla_\eta X,\eta\rangle
        =
        -\langle \nabla_\eta \grad b,\eta\rangle
        =
        -\mathrm{Hess}\,b(\eta,\eta)
        \le 0 .
    \]
    Thus $X$ satisfies the Riemannian monotonicity condition of
    \cite{ghirardelli2025conditional} with constant $\nu=0$. Hence its exact
    flow is nonexpansive. Since $X$ is $\mathcal C^1$, the flow is differentiable
    with respect to the initial condition, and the corresponding differential
    satisfies
    \[
        \|D\Phi_t(x)\eta\|\le \|\eta\|,
        \qquad t\ge 0,\ \eta\in T_x\M .
    \]
    Applying this estimate with $x=\gamma(s)$ and $t=a(b(\gamma(s)))$ gives
    \begin{equation}\label{eq:nonexpExact}
        \bigl\|D\Phi_{a(b(\gamma(s)))}(\gamma(s))\eta\bigr\|
        \le
        \|\eta\| ,
    \end{equation}
    provided $a(b(\gamma(s)))\ge 0$. This is why we assume $a\ge 0$,
    equivalently $\varphi'\ge 0$. Therefore,
    \begin{equation*}
        \|(T_\tau\circ \gamma)'(s)\|    \leq
        \bigl\| \dot{\gamma}(s) - a'(w(s)) w'(s) \grad b(\gamma(s)) \bigr\|.
    \end{equation*}
    Using the orthogonal decomposition of $\dot{\gamma}(s)$ at the source point,
    \begin{equation*}
        \dot{\gamma}(s) - a'(w(s)) w'(s) \grad b(\gamma(s)) =
        \xi_H(s) + (1-a'(w(s))) w'(s) \grad b(\gamma(s)).
    \end{equation*}
    Since $\xi_H(s)\perp \grad b(\gamma(s))$, we obtain
    \begin{equation*}
        \bigl\|\dot{\gamma}(s) - a'(w(s)) w'(s) \grad b(\gamma(s)) \bigr\|^2 =
        \|\xi_H(s)\|^2 + |w'(s)|^2 (1-a'(w(s)))^2.
    \end{equation*}
    Finally, since $0\leq a'(r)\leq 2$ for a.e. $r\in \mathbb R$, it follows that
    \begin{equation*}
        \bigl\| \dot{\gamma}(s) - a'(w(s)) w'(s) \grad b(\gamma(s)) \bigr\|^2 \leq
        \|\xi_H(s)\|^2 + |w'(s)|^2 = \|\dot{\gamma}(s)\|^2,
    \end{equation*}
    which implies
    \begin{equation*}
        \|(T_\tau \circ \gamma)'(s)\| \leq \|\dot{\gamma}(s)\|.
    \end{equation*}
    This concludes the proof. 
\end{proof}

\begin{remark}
    The first version of the proof applies under the stated lower-regularity assumptions $\varphi\in\mathcal{C}^{1,1}$.
    The second proof is included as a more geometric argument, and is written
    under the additional simplifying assumption $\varphi\in \mathcal C^2$, so that
    $a(r)=\tau\lambda\varphi'(\lambda r+\beta)$ is $\mathcal C^1$ and the computations
    involving $a'$ are classical.
\end{remark}

\begin{proof}[Proof of \Cref{trm:Ttau nonexpansive} (version 2)] 
    Define
    \begin{equation*}
        a(r) := \tau \lambda \varphi'(\lambda r+\beta).
    \end{equation*}
    In this proof, we assume $\varphi\in \mathcal C^2$. Then
    \[
        a'(r)=\tau\lambda^2\varphi''(\lambda r+\beta)
    \]
    for every $r\in\mathbb R$. Moreover, $a(r)\geq 0$ and since $\varphi'$ is Lipschitz, the scalar map $a$ is Lipschitz on $\mathbb R$. Fix $x,y\in \M$ and let $\sigma:[0,1]\to \M$ be the minimizing geodesic from $x$ to $y$, so that 
    \begin{equation*}
        \sigma(0)=x,\quad \sigma(1)=y,\qquad \|\dot\sigma(s)\|\equiv d(x,y).
    \end{equation*}
    Define a two-parameter map
    \begin{equation*}
        \Gamma:[0,1]\times[0,1]\to \M,\qquad
        \Gamma(s,t) := \Phi_{t a(b(\sigma(s)))}\big(\sigma(s)\big).
    \end{equation*}
    For each fixed $s$, the curve $t\mapsto \Gamma(s,t)$ is the Busemann geodesic through $\sigma(s)$, starting at $\Gamma(s,0)=\sigma(s)$ and ending at
    \begin{equation*}
        \Gamma(s,1) = \Phi_{a(b(\sigma(s)))}(\sigma(s)) = \exp_{\sigma(s)} (-a(b(\sigma(s))) \, \grad b(\sigma(s)) )=  T_\tau(\sigma(s)).
    \end{equation*}
    On the other hand, for each fixed $t$, the curve $s\mapsto \Gamma(s,t)$ is a deformation of $\sigma$ into the image curve
    \begin{equation*}
        \eta(s) := \Gamma(s,1) = T_\tau(\sigma(s)).
    \end{equation*}
    For each $s\in[0,1]$, the variational field along $t\mapsto \Gamma(s,t)$ defined by
    \begin{equation*}
        J^s(t) := \partial_s \Gamma(s,t)
    \end{equation*}
    is a Jacobi field. In particular,
    \begin{equation*}
        J^s(0) = \partial_s \Gamma(s,0) = \dot\sigma(s),\qquad
        J^s(1) = \partial_s \Gamma(s,1) = \dot\eta(s).
    \end{equation*}
    Our goal is to show that
    \begin{equation}
    \label{eq:lengths}
        \int_0^1 \|J^s(1)\|\,\dd s \le \int_0^1 \|J^s(0)\|\,\dd s,
    \end{equation}
    which implies $d(T_\tau (x),T_\tau (y))\le d(x,y)$. Fix $s=0$ and consider the $t$-curve
    \begin{align*}
        \gamma(t) := \Gamma(0,t), \qquad \gamma(0) &= x,  \\
        \gamma(1) & = \exp_x (-a(b(x)) \grad b(x)) = \exp_x (-\tau \grad V(x)) .
    \end{align*}
    By construction, $\gamma$ is a (reparametrized) Busemann geodesic
    \begin{align*}
        \gamma(t) &= \Phi_{t a(b(x))}(x),\\
        \dot\gamma(t) &= a(b(x)) \left. \frac{\dd}{\dd s} \Phi_{s}(x) \right|_{s=ta(b(x))}
        = - a(b(x))\,\grad b(\gamma(t)),
    \end{align*}
    with $\|\Dot{\gamma}\| = |a(b(x))|$ independent of $t$, and $\nabla_{\Dot{\gamma}} \Dot{\gamma}=0$, hence $\dot\gamma$ is parallel along $\gamma$.
    Set
    \[
        u(t):=\grad b(\gamma(t)).
    \]
    Since $\nabla_{\grad b}\grad b=0$, the vector field $u$ is parallel
    along $\gamma$. We decompose
    \[
        J(t) = J^\parallel(t)+J^\perp(t) = (c_0+c_1t)u(t)+J^\perp(t),
        \qquad \langle J^\perp(t),u(t)\rangle=0.
    \]
    The coefficients $c_0$ and $c_1$ depend on the initial data at $t=0$. The torsion-free commutation identity gives
    \begin{equation*}
        D_t \partial_s \Gamma (s,t) = D_s \partial_t \Gamma (s,t),
    \end{equation*}
    hence
    \begin{align*}
        D_t J(0)
        &= \left. D_t \partial_s \Gamma (s,t) \right|_{t=s=0} 
        = \left. D_s (-a(b(\sigma(s)))\,  \grad b(\sigma(s))) \right|_{s=0} \\
        & = - 
        \left. \left( \frac{\dd}{\dd s} a(b(\sigma(s))) \right) \, \grad b(\sigma(s))\right|_{s=0}
        - \left. a(b(\sigma(s))) \, D_s ( \grad b(\sigma(s))) \right|_{s=0} \\
        &= - a'(b(x)) \, \langle \grad b(x), S_0 \rangle \, \grad b(x) - a(b(x)) \nabla_{S_0} \grad b(x).
    \end{align*}
    Then $c_0$ and $c_1$ are given by \cite[Section~3]{ghirardelli2025conditional}
    \begin{align*}
        c_0 &= \langle J(0), u(0) \rangle = \langle S_0, \grad b(x) \rangle \\
        c_1 &= \langle D_t J(0), u(0) \rangle 
        = - a'(b(x)) \, \langle S_0, \grad b(x) \rangle
        = - a'(b(x)) \, c_0,
    \end{align*}
    where we have used
    \begin{align*}
        \langle \nabla_{S_0} \grad b, \grad b \rangle 
        &= \Hess b (S_0, \grad b) \\
        &= \Hess b (\grad b, S_0)
        =  \langle \nabla_{\grad b}\grad b, S_0\rangle = 0,
    \end{align*}
    since $ \nabla_{\grad b}\grad b=0$. Therefore
    \begin{align*}
        \|J^\parallel(1)\|^2 - \|J^\parallel(0)\|^2 = (2c_0 c_1 + c_1^2) = c_0^2 \, a'(b(x))\, (-2 + a'(b(x)) ) \le 0,
    \end{align*}
    as $0\le a'(r) \le 2$ for any $r\in\mathbb{R}$ by assumption. 
    Since $u$ is parallel along $\gamma$,
    \[
        D_t J^\perp = D_t J(t)-c_1u(t).
    \]
    Using $D_t J = D_s\partial_t\Gamma\big|_{s=0}$ and $\partial_t\Gamma(s,t) = -a(b(\sigma(s)))\grad b(\Gamma(s,t))$ we obtain
    \begin{align*}
        D_t J(t)
        &= -\left. \frac{\dd}{\dd s}a(b(\sigma(s))) \right|_{s=0}u(t)
        -a(b(x))\nabla_{J(t)} \grad b (\gamma(t) ) \\
        &= c_1 u(t)-a(b(x))\nabla_{J(t)}\grad b (\gamma(t)).
    \end{align*}
    Consequently,
    \[
        D_t J^\perp(t) = -a(b(x))\nabla_{J(t)}\grad b (\gamma(t)).
    \]
    Since
    \[
        J(t)=(c_0+c_1t)\grad b(\gamma(t)) +J^\perp(t)
    \]
    and $\nabla_{\grad b}\grad b=0$, we have
    \[
        \nabla_{J(t)} \grad b(\gamma(t)) = \nabla_{J^\perp(t)}\grad b(\gamma(t)).
    \]
    Hence
    \[
        D_t J^\perp (\gamma(t))
        = -a(b(x))\nabla_{J^\perp(t)}\grad b  (\gamma(t)).
    \]
    Together with the convexity of $b$ by \Cref{lemma: properties of Busemann functions on Hadamard manifolds}, we then obtain
    \begin{align*}
        \frac{d}{dt}\|J^\perp (t)\|^2 & 
        = 2 \, \langle \nabla_{\dot \gamma(t)} J^\perp(t), J^\perp(t) \rangle \\
        & 
        = -2 a(b(x)) \, \Hess b_{\gamma(t)} (J^\perp(t), J^\perp(t)) \le 0,
    \end{align*}
    since $a(r) \ge 0$ for any $r\in\mathbb{R}$ by assumption. In particular, 
    \begin{equation*}
        \|J^\perp(1) \|^2 - \|J^\perp(0) \|^2  \le 0.
    \end{equation*} 
    Combining the two estimates and using the orthogonality of the decomposition, we obtain
    \begin{align*}
        \|J(1)\|^2-\|J(0)\|^2 =
        \left(
            \|J^\parallel(1)\|^2-\|J^\parallel(0)\|^2
        \right) +
        \left(
            \|J^\perp(1)\|^2-\|J^\perp(0)\|^2
        \right)
        \le0.
    \end{align*}
    Repeating the same reasoning to all $s\in[0,1]$ gives $\| J^s(1)\|^2 - \| J^s(0)\|^2 \le 0 $ (equivalently $\| J^s(1)\| - \| J^s(0)\| \le 0 $ ) for any $s\in[0,1]$. Hence 
    \begin{align*}
        d \big(T_\tau(x),T_\tau(y)\big) \le 
        \int_0^1 \|J^s(1)\|\, \dd s \le
        \int_0^1 \|J^s(0)\|\, \dd s = L(\sigma) = d(x,y),
    \end{align*}
    and $T_\tau$ is nonexpansive.
\end{proof}

\begin{remark}[Combination or composition of potentials]
    \label{remark:Lie-Trotter}
    Let $\omega_i \ge 0$,  $\lambda_i \ge 0$, and $\beta_i \in \R$. 
    Let $b_i$ be Busemann functions on $\M$ and let $\varphi : \R \to \R$ of class $\C^{1,1}$ and such that
    \[
        \esssup_{s\in\mathbb R} \varphi''(s)\leq M_2.  
    \]
    Consider the maps
    \[
        T_i(x)=\exp_x(-\tau_i\grad V_i(x)), \qquad
        V_i(x) = w_i\varphi(\lambda_i b_i(x)+\beta_i), \qquad i=1,\dots,m.
    \]
    If $\tau_i w_i\lambda_i^2M_2\leq 2$ for every $i$ then every $T_i$ is nonexpansive, and the composition $T=T_m\circ T_{m-1}\circ\cdots\circ T_1$ is nonexpansive too. Consider now the simultaneous step
    \[
        x\mapsto \exp_x\left(-\tau\grad\sum_{i=1}^mV_i(x)\right).
    \]
    Here, several Busemann directions are mixed inside one exponential, destroying the one-dimensional horospherical structure used above. The split layer keeps each factor in the single-Busemann regime, where the gradient step is a variable-time gradient flow and can be analyzed more easily. However, with $\tau_1=\cdots=\tau_m=\tau$ small, the split version provides an approximation of the simultaneous step (Lie-Trotter splitting, \cite{blanes2024splitting}), making the split version as expressive as the more complex simultaneous-step one, while still allowing nonexpansiveness certificates.
\end{remark}

The map $T_\tau$ is a nonexpansive map on the finite-dimensional Hadamard manifold $\M$. Moreover, \Cref{trm: Ttau is quasi alpha FNE} with $\bar \tau$ given by \eqref{eq: condition for nonexpansiveness} guarantees $T_\tau$ is quasi-$\alpha$-firmly nonexpansive on $\mathcal{M}$, and \Cref{corollary: convergence to fix point for quasi firmly nonexpansive operators} implies that the iterates $x_{k+1}=T_\tau(x_k)$, $x_0\in\M$, converge strongly to a fixed point of $T_\tau$ in $\mathcal{M}$, provided $\Fix T_\tau \ne \emptyset$. Furthermore, this property is preserved under composition, provided we assume non-trivial intersections of the fixed sets; see \Cref{corollary: finite composition of firmly non expansive is firmly nonexpansive}. This allows us to recover quasi-$\alpha$-firmly nonexpansive networks in two different ways: by composing layers as $T_\tau$ with suitably constrained stepsizes and ensuring nontrivial intersections of their fixed sets, or defining a modified network $N_\theta(x)=x\#_\alpha D_\theta(x)$ for a nonexpansive network $D_\theta(x)=T_{\tau_m}\circ \cdots \circ T_{\tau_1}$ and $\alpha\in(0,1)$. In the inverse problem considered in \Cref{subsec:spd-inverse}, we opt for the second option since it is not immediate to ensure the desired condition on the fixed sets of the layers.

\subsection{Generalization to product manifolds}
The construction seen so far can be extended to product manifolds for which the base manifold admits efficiently computable Busemann functions and their gradients. This extension can be used to apply the proposed theory to hyperbolic-valued images and graphs, or for considering tasks on product manifolds, such as in Diffusion Tensor Imaging (DTI). Such applications are outside the scope of this paper and will be considered in future extensions.

Let $\N=\M^m$ be endowed with the product Riemannian metric. We adopt the notation $d_\M$ and $d_\N$ for the Riemannian distances induced on $\M$ and $\N$, respectively. Let $X=(x^1,\dots,x^m),Y=(y^1,\dots,y^m)\in\N$ denote a generic pair of points on the product manifold. The product distance is
\[
    d_\N(X,Y)^2=\sum_{k=1}^m d_\M(x^k,y^k)^2.
\]
For a tangent vector $(v_1,\dots,v_m)\in T_{x_1} \M\times\cdots\times T_{x_m} \M \simeq T_X \N$, the product exponential is componentwise:
\[
    \exp_X(v^1,\dots,v^m)
    =\bigl(\exp_{x^1}(v^1),\dots,\exp_{x^m}(v^m)\bigr).
\]
Fix one Busemann function $b:\M\to\mathbb{R}$ and define the map
\[
    B:\N\to\mathbb{R}^m,
    \qquad
    B(X)=\bigl(b(x^1),\dots,b(x^m)\bigr).
\]
Let $U:\mathbb{R}^m\to\mathbb{R}$ and define $V(X)=U(B(X))$. Assuming $U\in \mathcal C^1$, the product gradient has components
\[
    \grad_{x^k}V(X)
    =\partial_k U(B(X))\grad b(x^k),
    \qquad k=1,\dots,m.
\]
Hence the explicit Riemannian gradient step of $V$ is
\[
    T(X)=\exp_X(-\tau\grad V(X)),
\]
with components
\begin{equation}
    \label{eq:product-single-busemann-step}
    T(X)^k
    =\exp_{x^k}\left(-\tau\partial_kU(B(X))\grad b(x^k)\right)
    =\Phi_{\tau\partial_kU(B(X))}(x^k),
\end{equation}
where we have used \Cref{lemma: phi coincides with exp}. Note that the flow times for each component depend on the full vector $B(X)$, so the components of $T$ are coupled through $U\circ B$. We study the nonexpansiveness of this gradient step with the two results below.
\begin{theorem}
    \label{trm:product-general}
    Let $(\M,g)$ be a Hadamard manifold and $\N = \M^m$ the product manifold equipped with the product metric. Let $a:\R^m\to\R^m$ be Lipschitz continuous as a vector-valued function. Assume
    \[
        a_k(z)\geq0, \qquad k=1,\dots,m, \qquad z\in\R^m,
    \]
    and assume that $R(z):=z-a(z)$ is nonexpansive in the Euclidean norm on $\R^m$.
    Define
    \[
         T:\N\to\N,
         \qquad
         T(X)^k=\Phi_{a_k(B(X))}(x^k),
         \qquad k=1,\dots,m.
    \]
    Then $T$ is nonexpansive on $\N$.
\end{theorem}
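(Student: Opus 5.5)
We follow the strategy of version 1 of the proof of \Cref{trm:Ttau nonexpansive}, applied to the product manifold. Fix $X,Y\in\N$ and let $\Gamma:[0,1]\to\N$, $\Gamma(s)=(\gamma^1(s),\dots,\gamma^m(s))$, be the minimizing geodesic from $X$ to $Y$. Since $\N$ is itself Hadamard, $\ell(\Gamma)=d_\N(X,Y)$ and $\|\dot\Gamma(s)\|^2=\sum_k\|\dot\gamma^k(s)\|^2$. It then suffices to show that $\|(T\circ\Gamma)'(s)\|\le\|\dot\Gamma(s)\|$ for a.e. $s$.

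Set $w(s):=B(\Gamma(s))\in\R^m$, with $w_k(s)=b(\gamma^k(s))$; this curve is $\mathcal C^1$. Since $a$ is Lipschitz, $a\circ w$ is Lipschitz and hence absolutely continuous. We need $(a\circ w)'(s)$ to exist for a.e. $s$, and we need a bound on it. The direct chain rule is delicate for vector-valued Lipschitz $a$, so we avoid it. Instead we use only that $r(s):=R(w(s))=w(s)-a(w(s))$ satisfies $\|r(s_2)-r(s_1)\|\le\|w(s_2)-w(s_1)\|$, which follows from nonexpansiveness of $R$. Consequently, wherever both are differentiable (a.e. in $s$), $\|w'(s)-(a\circ w)'(s)\|\le\|w'(s)\|$. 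Because $(t,x)\mapsto\Phi_t(x)$ is $\mathcal C^1$ by \Cref{lemma: phi coincides with exp}, each component $s\mapsto\Phi_{a_k(w(s))}(\gamma^k(s))$ is absolutely continuous. At a.e. $s$ its derivative is
\[
D\Phi_{a_k(w(s))}(\dot\gamma^k(s))-(a_k\circ w)'(s)\,\grad b\bigl(T(\Gamma(s))^k\bigr)
= D\Phi_{a_k(w(s))}\Bigl(\dot\gamma^k(s)-(a_k\circ w)'(s)\grad b(\gamma^k(s))\Bigr),
\]
where we used $D\Phi_t(\grad b)=\grad b\circ\Phi_t$, exactly as in version 1.

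Next, since $a_k\ge0$, the differential bound $\|D\Phi_t\eta\|\le\|\eta\|$ for $t\ge0$ (from convexity of $b$, as in version 1) removes $D\Phi$. For each $k$, decompose $\dot\gamma^k(s)=w_k'(s)\grad b(\gamma^k(s))+\xi^k_H(s)$ with $\xi^k_H\perp\grad b$, using $\|\grad b\|=1$. Then
\[
\|(T\circ\Gamma)'(s)\|^2\le\sum_{k}\Bigl(\|\xi_H^k(s)\|^2+\bigl(w_k'(s)-(a_k\circ w)'(s)\bigr)^2\Bigr)
=\sum_k\|\xi^k_H(s)\|^2+\|r'(s)\|^2 .
\]
By the difference-quotient bound above, $\|r'(s)\|^2\le\|w'(s)\|^2=\sum_k w_k'(s)^2$. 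The right-hand side is therefore at most $\sum_k\|\dot\gamma^k(s)\|^2=\|\dot\Gamma(s)\|^2$. Integrating gives $d_\N(T(X),T(Y))\le\ell(T\circ\Gamma)\le\ell(\Gamma)=d_\N(X,Y)$.

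The main obstacle is the a.e. differentiability step: we must justify differentiating the composite $\Phi_{a_k(w(s))}(\gamma^k(s))$ at points where $a\circ w$ is differentiable, and avoid chain rules for $a$ itself. We handle this by differentiating the absolutely continuous scalar functions $a_k\circ w$ directly, which is possible by Rademacher in one variable. We then apply the ordinary chain rule to the $\mathcal C^1$ map $(t,x)\mapsto\Phi_t(x)$ composed with the curve $s\mapsto(a_k(w(s)),\gamma^k(s))$, which is differentiable at such $s$. Finally, the nonexpansiveness of $R$ transfers to derivatives through the limit of difference quotients.
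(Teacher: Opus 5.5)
Your proposal is correct and follows the paper's proof step for step: you pull back to the curve $B\circ\Gamma$ in $\mathbb{R}^m$, bound the derivative of $R\circ B\circ\Gamma$ by difference quotients, and write each component velocity as $D\Phi$ applied to the horospherical part plus $(R\circ B\circ\Gamma)_k'$ times the Busemann gradient. You then drop $D\Phi$ using nonexpansiveness for nonnegative times and sum over $k$, exactly as the paper does; the only change is notation, since your $w$ and $r$ are the paper's $r$ and $R\circ r$, and your explicit chain-rule justification at points where each scalar Lipschitz function $a_k\circ w$ is differentiable just spells out the paper's ``at a.e.\ $s$'' step.
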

The above \Cref{trm:product-general} can be proved by adapting version 1 of the proof of \Cref{trm:Ttau nonexpansive}. We provide the complete proof in \Cref{appendix: proofs omitted in the main document}.

\begin{remark}
    For $m=1$, nonexpansiveness of $r\mapsto r-a(r)$ is equivalent to $|1-a'(r)|\leq1$ a.e., hence to the one-dimensional criterion $0\leq a'(r)\leq2$ a.e. in \Cref{trm:Ttau nonexpansive}. Therefore, \Cref{trm:product-general} provides a strict generalization of \Cref{trm:Ttau nonexpansive}.
\end{remark}

\begin{corollary}
    \label{cor:gradient-step-product}
    Let $(\M,g)$ be a Hadamard manifold and $\N = \M^m$ the product manifold equipped with the product metric. Let $U:\R^m\to\R$ be $\C^1$, convex, $L$-smooth, and nondecreasing in every component, i.e., $\nabla U(z)\geq 0$ componentwise. Define $V(X)=U(B(X))$. Then the explicit Riemannian gradient step
    \[
        T(X)=\exp_X(-\tau\grad V(X))
    \]
    is nonexpansive on $\N$ for every $\tau\in[0,2/L]$. 
\end{corollary}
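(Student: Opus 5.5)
The plan is to reduce \Cref{cor:gradient-step-product} to \Cref{trm:product-general} with the choice $a(z):=\tau\nabla U(z)$. By \eqref{eq:product-single-busemann-step} and \Cref{lemma: phi coincides with exp}, the gradient step can be written componentwise as
\[
T(X)^k=\exp_{x^k}\bigl(-\tau\,\partial_kU(B(X))\grad b(x^k)\bigr)=\Phi_{a_k(B(X))}(x^k).
\]
This is exactly the map considered in \Cref{trm:product-general}. It therefore remains to check the three hypotheses of that theorem for this $a$.

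The first two hypotheses are immediate.
\begin{itemize}
\item Since $U$ is $L$-smooth, $\nabla U$ is $L$-Lipschitz. Hence $a=\tau\nabla U$ is Lipschitz continuous as a map $\R^m\to\R^m$.
\item Since $U$ is nondecreasing in every component, $\nabla U(z)\ge 0$ componentwise. With $\tau\ge0$ this gives $a_k(z)\ge 0$ for all $k$ and $z$.
\end{itemize}
If $\tau=0$, then $T=\mathrm{Id}$ and there is nothing to prove, so we may assume $\tau\in(0,2/L]$.

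The substantive step is the third hypothesis: $R(z)=z-\tau\nabla U(z)$ must be nonexpansive in the Euclidean norm whenever $\tau\in[0,2/L]$. This is the classical Euclidean fact that a gradient step of a convex $L$-smooth function is nonexpansive for stepsizes up to $2/L$. We prove it with the Baillon--Haddad theorem, which states that for convex $L$-smooth $U$ the gradient is $1/L$-cocoercive:
\[
\langle \nabla U(z)-\nabla U(w),z-w\rangle\ge \tfrac1L\|\nabla U(z)-\nabla U(w)\|^2 .
\]
Expanding the square and applying cocoercivity gives
\begin{align*}
\|R(z)-R(w)\|^2&=\|z-w\|^2-2\tau\langle \nabla U(z)-\nabla U(w),z-w\rangle+\tau^2\|\nabla U(z)-\nabla U(w)\|^2\\
&\le \|z-w\|^2-\tau\bigl(\tfrac{2}{L}-\tau\bigr)\|\nabla U(z)-\nabla U(w)\|^2 .
\end{align*}
The last term is nonpositive because $\tau\le 2/L$, so $\|R(z)-R(w)\|\le\|z-w\|$.

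With all three hypotheses verified, \Cref{trm:product-general} yields that $T$ is nonexpansive on $\N$. The only nontrivial input is cocoercivity, a standard result we cite rather than reprove. We also note that \Cref{trm:product-general} uses only that $a$ is Lipschitz, which $L$-smoothness guarantees, so no extra regularity of $U$ is needed.
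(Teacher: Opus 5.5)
Your proposal is correct and follows the paper's proof: the paper also applies \Cref{trm:product-general} with $a(z)=\tau\nabla U(z)$, relying on the Euclidean nonexpansiveness of $z\mapsto z-\tau\nabla U(z)$ for $\tau\in[0,2/L]$. The only difference is in detail. You explicitly check the Lipschitz and nonnegativity hypotheses and justify the Euclidean step via Baillon--Haddad cocoercivity, whereas the paper takes these as known.
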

We recall that a $\C^1$ function $U:\R^m\to\R$ is $L-$smooth if its gradient $\nabla U:\R^m\to\R^m$ is $L-$Lipschitz.
\begin{proof}
    For convex $L$-smooth $U$, the Euclidean map $z\mapsto z-\tau\nabla U(z)$ is nonexpansive for $\tau\in[0,2/L]$. We conclude by applying \Cref{trm:product-general} with $a(z)=\tau\nabla U(z)$.
\end{proof}

\section{Implementation of the Busemann gradient-descent layers}
\label{sec:implementation}
In this section, we provide some practical implementations of the proposed Busemann gradient-descent layers. We provide explicit expressions of Busemann functions in the three settings $\M=\mathbb{R}^n$, the $n-$dimensional Poincar\'e ball $\M=\mathbb{D}^n$, and the manifold $S_{++}(n)$ of symmetric positive definite (SPD) matrices. We then list some admissible activation functions (i.e., those that satisfy the hypothesis of \Cref{trm:Ttau nonexpansive}) and compute the corresponding bound on the stepsize. The proposed architectures on the Poincar\'e ball and SPD manifold are then empirically tested in \Cref{sec:numerical experiments}.

\subsection{Examples of Busemann functions}
\subsubsection{Euclidean space}
For every $u\in \mathbb{R}^n$ with $\|u\|_2=1$, the Busemann function associated with the geodesic ray $\gamma(t) = -tu$ is $b(x)=u^\top x$ (see \cite[Example II.8.24(1)]{bridson2013metric}).
The norm constraint can be removed by absorbing the norm in $\lambda$: if $\xi \in\mathbb{R}^n\setminus\{0\}$, then $\xi^\top x = \|\xi\|_2 b_{\xi/\|\xi\|_2}(x)=:\lambda b_{\xi/\|\xi\|_2}(x)$. 
Let $\beta \in \mathbb{R}$ and $\varphi:\mathbb{R} \to \mathbb{R}$ in $\C^{1,1}$ be a convex function. We consider the potential
\begin{equation*}
    V(x) = \varphi(\xi^\top x + \beta).
\end{equation*}
Set $\Delta := \esssup_{s\in \R}\varphi''(s)$, which is finite given that $\varphi'$ is Lipschitz continuous. The function $V$ defined above is $L$-smooth with $L=\|\xi\|_2^2\Delta$. The gradient-descent map associated to it is therefore $1$-Lipschitz if $\tau \in [0,2/L]$ and it writes
\begin{equation}
    \label{eq:resnetEuclidean}
    T_\tau(x) = x - \tau \nabla V(x) = x- \tau \, \varphi'(\xi^\top x + \beta) \xi.
\end{equation}
In this context, $\theta=(\xi, \beta)$ are the trainable weights of the layer. Layers defined as \eqref{eq:resnetEuclidean} can recover Householder reflection layers \cite[Eq. 6]{mhammedi2017efficient} and the \texttt{MaxMin} and \texttt{GroupSort} activations \cite{murari2026approximation,anil2019sorting}. We remark that, in the case of $\R^n$, the potential obtained by summing finitely many such potentials also leads to nonexpansive gradient steps:
\begin{align*}
V(x) &= \sum_{i=1}^k \varphi(\xi_i^\top x + \beta_i),\,\,T_\tau(x) = x-\tau \Xi^\top \varphi'(\Xi x + v),\\
\Xi &= \begin{bmatrix} \xi_1^\top \\ \vdots \\ \xi_k^\top \end{bmatrix}, \quad v=\begin{bmatrix} \beta_1 \\ \vdots \\ \beta_k\end{bmatrix},\quad \tau \in \left[0,\frac{2}{\Delta\|\Xi\|_2^2}\right].
\end{align*}
This is a layer of a $1$-Lipschitz ResNet as considered in \cite{meunier2022dynamical,sherry24dsn}. 
\begin{remark}
For $\M=\mathbb{R}^n$, the considered Busemann functions are linear. Therefore, here, the non-decreasing assumption on $\varphi$ can be dropped. It suffices to assume that $\varphi\in \mathcal{C}^{1,1}$ and convex.
\end{remark}

\subsubsection{Hyperbolic manifolds}\label{sec:hypArch} 
In the Poincar\'e ball $\mathbb{D}^n = \{ x\in\R^n : \|x\|_2 < 1\}$, the points at infinity are the ones lying on the hypersphere $S^{n-1} = \partial \mathbb{D}^n$. Let $\xi \in S^{n-1}$ and consider a geodesic ray $\gamma$, parametrized by arc length, and tending to $\xi$. The Busemann function associated with $\gamma$ is
\begin{equation}
    \label{eq: Busemann function in Poincare ball}
    b(x) = \lim_{t\to\infty} (d(\gamma(t), x)-t) =  \log \left(\frac{\|\xi- x\|_2^2}{1-\|x\|_2^2} \right),
\end{equation}
where $\|\xi\|_2=1$ and $x\in\mathbb{D}^n$ (see \cite{ghadimi2021hyperbolic}).
The Euclidean gradient of $b$ is
\begin{align*}
    \nabla b (x) 
    &= \frac{2}{1-\|x\|_2^2}x + \frac{2}{\|x-\xi\|_2^2} (x-\xi).
\end{align*}
The Riemannian metric on $\mathbb{D}^n$ is defined as
\begin{equation*}
    g_x^\mathbb{D} = \left( \frac{2}{1-\|x\|_2^2} \right)^2 g^E =: \delta_x^2 g^E
\end{equation*}
where $g^E$ is the Euclidean one. The Riemannian gradient $\grad b$ is then 
\begin{equation*}
    \grad b (x) = \frac{1}{\delta_x^2} \nabla b(x) = \frac{1-\|x\|_2^2}{2} x + \frac{(1-\|x\|^2_2)^2}{2\|x-\xi\|_2^2} (x-\xi).
\end{equation*}
Using $b$ and $\grad b$ as above yields an implementable gradient-descent map, with the freedom to choose $\xi\in S^{n-1}$, $\lambda>0$, and $\beta\in \R$. We provide a more implementation-oriented description of this layer in \Cref{alg:hyp-single-busemann}.

\subsubsection{SPD matrices}\label{se:spd_def}
Consider the manifold of symmetric positive definite matrices
\[
    S_{++}(n)
    =
    \{X\in\mathbb{R}^{n\times n}: X^\top=X,\ X\succ0\}
\]
equipped with the affine-invariant Riemannian distance
\cite{pennec2006riemannian}
\[
    d_{\mathrm{AI}}(X,Y)
    =
    \left\|
    \Log\left(X^{-1/2}YX^{-1/2}\right)
    \right\|_F
    =
    \left(\sum_{i=1}^n \log^2 \lambda_i\right)^{1/2},
\]
where $\lambda_1,\dots,\lambda_n$ are the eigenvalues of
\(X^{-1/2}YX^{-1/2}\). Then $S_{++}(n)$ is a Hadamard manifold, and
$T_XS_{++}(n)\simeq S(n)$, the space of $n\times n$ symmetric matrices.

For the split SPD layers, we use the explicit Cholesky-based expression of
Busemann functions on $S_{++}(n)$. This is the formula appearing, for
example, in \cite[Example~4]{ferreira2026subdifferential}. Let $U\in O(n)$, let
$d\in\mathbb{R}^n$ be a vector with $\|d\|_2=1$ and whose entries are sorted in ascending order, fixing the
ordering convention used in the Cholesky factorization. We write
\[
    U^\top XU = LL^\top,
\]
where $L$ is lower triangular factor in the Cholesky factorization of $U^\top X U$, which has positive diagonal. We define the Busemann function
\[
    b_{U,d}(X)
    =
    -2\sum_{i=1}^n d_i\log L_{ii}.
\]
A geodesic ray starting from the identity is determined by a unit tangent
direction \(A\in S(n)\):
\[
    \gamma_A(t)=\exp_I(tA)=\exp(tA), \qquad \|A\|_F=1,
\]
where $\exp$ without a subscript denotes the matrix exponential. We parametrize such directions as
\[
    A = U\operatorname{diag}(d)U^\top,
    \qquad U\in O(n), \quad \|d\|_2=1,\,\,d_1<d_2<\dots<d_n.
\]
In the implementation, \(U\) is learned through an orthogonal parametrization.
The vector \(d\) is obtained by sorting a raw trainable vector, subtracting its
mean, and normalizing it. Thus \(\sum_i d_i=0\) and \(\|d\|_2=1\). This
restricts the associated direction $A$ to be trace-free. This centering is a modeling choice for the
SPD denoiser, and it is not required by the general Busemann formula on full
$S_{++}(n)$.

The affine-invariant Riemannian gradient of \(b_{U,d}\) is
\[
    \operatorname{grad} b_{U,d}(X)
    =
    - ULDL^\top U^\top,
    \qquad
    D=\operatorname{diag}(d),
\]
see \cite[Equation 23]{ferreira2026subdifferential}.
For a scalar potential
\[
    V(X)
    =
    \varphi(\lambda b_{U,d}(X)+\beta),
    \qquad
    \lambda>0,
\]
a negative-gradient step has tangent direction
\[
    -\tau\,\operatorname{grad}V(X)
    =
    s\, ULDL^\top U^\top,
    \qquad
    s
    =
    \tau\lambda\,
    \varphi'(\lambda b_{U,d}(X)+\beta).
\]
In our experiments $\varphi(t)=\frac12\mathrm{ReLU}(t)^2$, so
$\varphi'(t)=\mathrm{ReLU}(t)$.

Using the affine-invariant exponential map and congruence invariance,
\[
    \exp_{LL^\top}(sLDL^\top)
    =
    L\exp(sD)L^\top.
\]
Equivalently, the Cholesky factor is updated as
\[
    L
    \longmapsto
    L\exp\left(\frac{s}{2}D\right),
\]
which gives the closed-form split update
\begin{equation}\label{eq:gradStepSPD}
    \exp_X(-\tau \grad V(X))
    =
    U
    \left(
        L\exp\left(\frac{s}{2}D\right)
    \right)
    \left(
        L\exp\left(\frac{s}{2}D\right)
    \right)^\top
    U^\top.
\end{equation}
This is the update implemented in the split Busemann SPD denoiser, which is more computationally efficient than directly computing the affine-invariant Riemannian exponential at $X$.

The trainable parameters of each single-Busemann step are the orthogonal matrix
$U$, the vector $d$, the scalar offset $\beta$, the positive
scale $\lambda$, and the bounded stepsize $\tau$. 

\subsection{Examples of activations}
Our theory is compatible with activation functions $\varphi:\R\to\R$ that are $\mathcal C^1$, have a globally Lipschitz derivative, are convex, and are non-decreasing. Some admissible ones are:
\begin{itemize}
    \item \textit{Softplus.}
    \begin{equation*}
        \varphi(r) = \log(1+e^r), \qquad
        \varphi'(r) = \frac{e^r}{1+e^r} \ge 0, \qquad
        \varphi''(r)\leq \frac14,
    \end{equation*}
    and condition \eqref{eq: condition for nonexpansiveness} becomes
    \begin{equation*}
        0\le \tau \lambda^2\leq 8 \iff 0 \le \tau \leq \frac{8}{\lambda^2}.
    \end{equation*}
    \item \textit{Squared $\relu$.}
    \begin{equation*}
        \varphi(r)=\frac12 {\relu(r)}^2, \qquad
        \varphi'(r)=\relu (r), \qquad
        \varphi''(r)= \begin{cases}
        0,& r<0,\\
        1,& r>0,
        \end{cases}
    \end{equation*}
    with $\varphi''$ defined for almost every $r$ since $\varphi'$ is Lipschitz continuous. Condition \eqref{eq: condition for nonexpansiveness} is then
    \begin{equation*}
        0 \le \tau \lambda^2\leq 2 \iff 0 \le \tau \leq \frac{2}{\lambda^2}.
    \end{equation*}
    \item \textit{Smooth bounded-slope activations.}
    Any convex nondecreasing $\C^{1,1}$ activation with
    \begin{equation*}
        \esssup_{r\in\mathbb R} \varphi''(r)\leq M_2
    \end{equation*}
    fits the theorem and yields the stepsize restriction
    \begin{equation*}
        0 \le \tau \lambda^2 M_2 \leq 2 \iff 0 \le \tau \leq \frac{2}{\lambda^2 M_2}.
    \end{equation*}
\end{itemize}
In our numerical experiments, we consider $\mathrm{ReLU}^2/2$, which has empirically performed slightly better than softplus. We plot the two activations side-by-side in \Cref{fig:activations} together with their first two derivatives.

\begin{figure}[h!]
    \centering
    \includegraphics[width=0.7\linewidth]{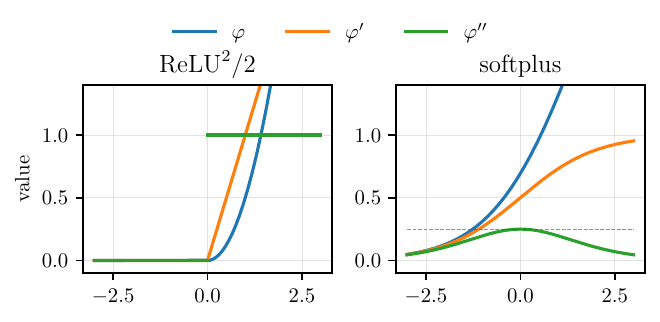}
    \caption{Two activation functions that satisfy the assumptions of our theory, along with their first two derivatives.}
    \label{fig:activations}
\end{figure}

\begin{remark}
The above are admissible activation functions under the $\C^{1,1}$-regularity assumption. If $\mathcal{C}^2$-regularity is required, then $\mathrm{ReLU}^2(x)/2$ would not be allowed.
\end{remark}

\section{Numerical experiments}
\label{sec:numerical experiments}

This section evaluates the gradient-descent-type nonexpansive architectures we propose in two complementary regimes. The first experiment is a supervised classification problem on the Poincar\'e disk $\mathbb{D}^2$. It is designed to test whether the constrained Busemann layers can fit non-linear hyperbolic decision boundaries while being robust under adversarial perturbations. The second experiment is an inverse problem on $S_{++}(10)$, the manifold of $10\times 10$ symmetric positive definite matrices, where a trained Busemann denoiser is inserted into a Plug-and-Play (PnP) reconstruction loop for masked Wishart observations. This second experiment tests whether the nonexpansive denoiser can act as an effective learned prior beyond a likelihood-only reconstruction.

In both cases, we compare against baseline models that preserve the underlying geometry but lack Lipschitz constraints. We provide more details on each of the two problems and the considered architectures in the dedicated subsections below. The code accompanying the paper can be found at the GitHub repository \href{https://github.com/davidemurari/one-lipschitz-hadamard-networks}{https://github.com/davidemurari/one-lipschitz-hadamard-networks}.

\subsection{Hyperbolic classification on \texorpdfstring{$\mathbb{D}^2$}{D2}}
\label{subsec:hyp-classification}

We use two synthetic datasets on the Poincar\'e disk $\mathbb{D}^2$. We sample the points first in the Euclidean plane and then map them to $\mathbb{D}^2$ through $\exp_0$. The first class is sampled uniformly in $B_{0.45,\ell^2}(0)$. The second
class is sampled in
$B_{0.95,\ell^2}(0)\setminus B_{0.62,\ell^2}(0)$, with radius distributed as a Gaussian centered at $0.78$ and standard deviation
$0.08$, clipped to $[0.62,0.95]$. Here, we write $B_{r,\ell^2}(0)$ for the closed Euclidean ball in $\mathbb{R}^2$ of radius $r>0$ and center at the origin. The second dataset consists of $k=12$ angular classes arranged in radial sectors. Half of the classes concentrate near the Euclidean disk radius $r_{\mathrm{in}}=0.7$ and the remaining classes near $r_{\mathrm{out}}=0.8$. Each point is obtained by perturbing the associated reference radius and angle with Gaussian additive noise with zero mean and fixed radial standard deviation $\sigma_{\mathrm{rad}}=0.02$ and angular standard deviation $\sigma_{\mathrm{ang}}=0.16$. Both datasets contain $4000$ points, split into an 80/20 train--test split. These tasks are intentionally simple enough to visualize, and they allow us to inspect the mathematical behavior of the models we are considering. We show the plot of the two datasets in \Cref{fig:dataset_classification}.
\begin{figure}[h!]
    \centering
    \includegraphics[width=0.3\linewidth]{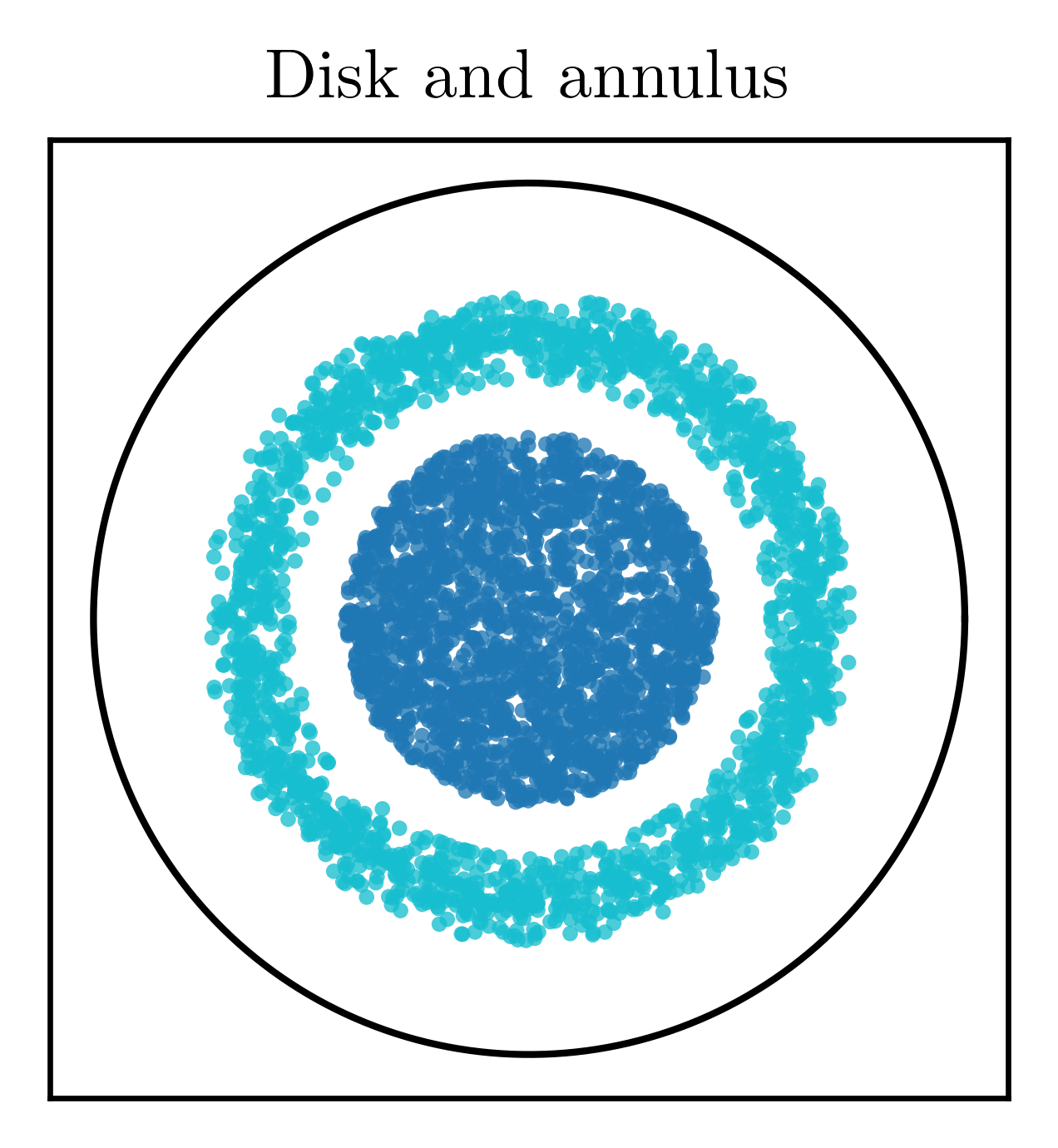}
    \includegraphics[width=0.3\linewidth]{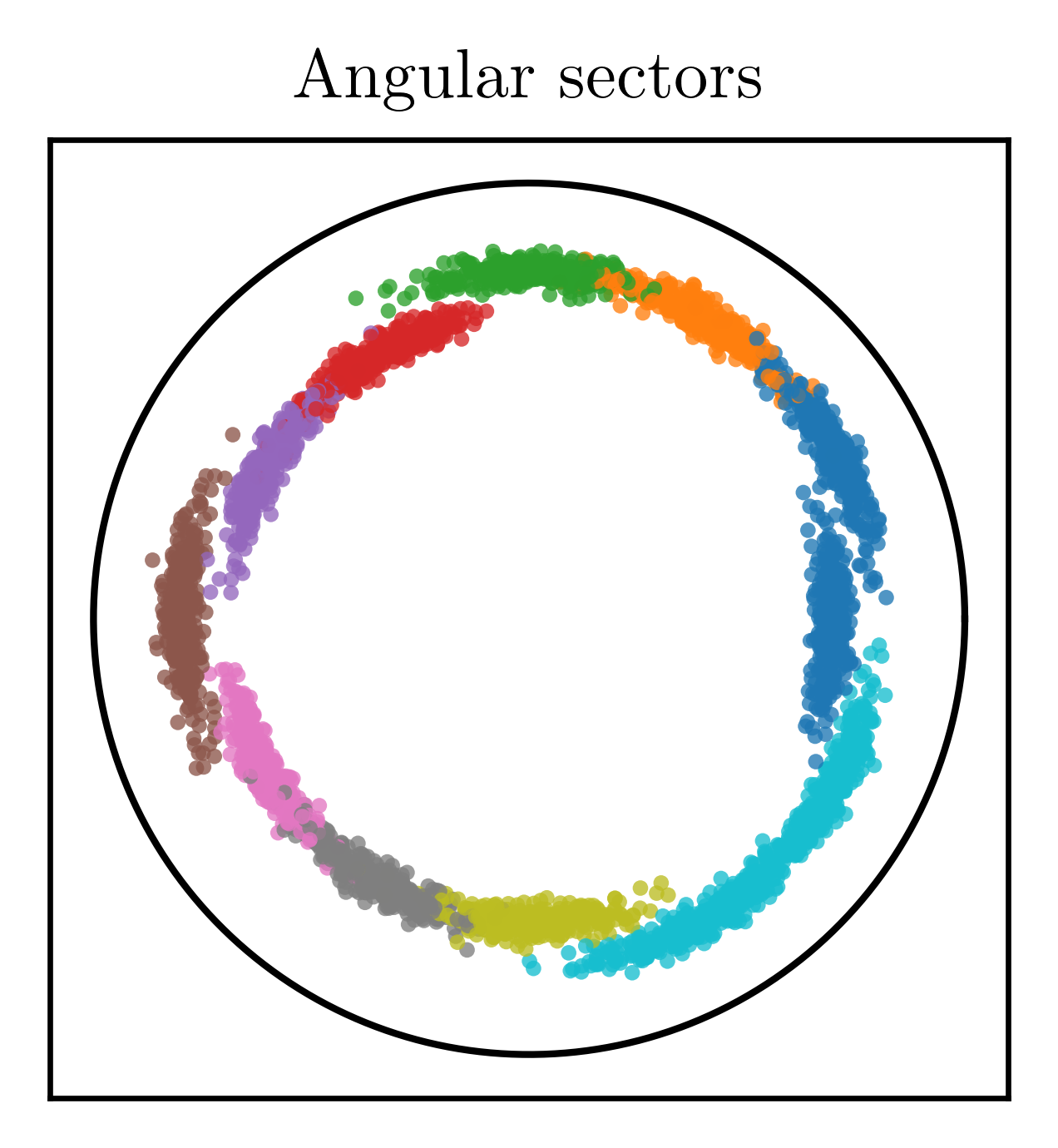}
    \caption{Datasets used for the classification problem.}
    \label{fig:dataset_classification}
\end{figure}

\begin{algorithm}[h!]
\caption{Schematic comparison of the two main hyperbolic classifiers.}
\label{alg:hyp-architecture-comparison}
\begin{algorithmic}[1]
\REQUIRE Input $x\in\mathbb{D}^2$
\STATE
\begin{tabular}{p{0.47\linewidth}p{0.47\linewidth}}
\textbf{Nonexpansive Busemann model} &
\textbf{Hyperbolic ResNet baseline}
\\[0.4em]

$z \gets (x,0)\in\mathbb{D}^3$
&
$z \gets (x,0)\in\mathbb{D}^3$
\\
\emph{trivial isometric embedding}
&
\emph{trivial isometric embedding}
\\[0.5em]

\textbf{for} $r=1,2$ \textbf{do}
&
\textbf{for} $r=1,2$ \textbf{do}
\\[0.3em]

\quad $z \gets A_r(z)$
&
\quad $z \gets A_r(z)$
\\
\quad \emph{Learnable isometry on $\mathbb{D}^3$}
&
\quad \emph{Learnable M\"{o}bius affine map}
\\[0.5em]

\quad \textbf{for} $j=1,\dots,5$ \textbf{do}
&\quad $z \gets \exp_z(f_r(z))$
\\[0.3em]

\qquad $z \gets G_{b_{r,j}}(z)$
&  \quad \emph{unconstrained residual step \eqref{eq:hypResidual}}
\quad 
\\
\qquad \emph{Busemann step, \Cref{alg:hyp-single-busemann}}
&
\\[0.3em]

\quad \textbf{end for}
&
\\[0.3em]

\textbf{end for}
&
\textbf{end for}
\\[0.5em]

$z \gets A_{3}(z)$ & $z \gets A_{3}(z)$
\\
\emph{Learnable isometry on $\mathbb{D}^3$}
&
\emph{Learnable M\"{o}bius affine map} \\
[0.5em]

$\ell \gets H(z)\in\mathbb{R}^{C}$
&
$\ell \gets H(z)\in\mathbb{R}^{C}$
\\
\emph{classification head \eqref{eq:proto}}
&
\emph{classification head \eqref{eq:proto}}
\\[0.5em]

\textbf{return} $\ell$
&
\textbf{return} $\ell$
\end{tabular}
\end{algorithmic}
\end{algorithm}

\paragraph{Architectures}
We consider three models for this task. Two are 1-Lipschitz before the classification head, and the other is unconstrained. We now provide a precise description of how they are assembled. An algorithmic description of the Busemann and ResNet architectures is provided in \Cref{alg:hyp-architecture-comparison}. The isometric model follows a similar logic.

All the models start with a trivial embedding of the data points from $\mathbb{D}^2$ to $\mathbb{D}^3$ defined as $x\mapsto (x,0)$. $\mathbb{D}^3$ is the manifold where the hidden maps are defined. The classifier head of all the models is a hyperbolic prototype head with $C$ learned prototypes, where $C$ is the number of classes.  More precisely, we learn $C$ target points $t_1,\dots,t_{C}\in\mathbb{D}^{3}$ and return a vector in $\mathbb{R}^{C}$ defined as
\begin{equation}\label{eq:proto}
\mathbb{D}^3\ni h\mapsto \begin{bmatrix} -d(h,t_1) &\dots & -d(h,t_{C}) \end{bmatrix}^\top,
\end{equation}
where $h$ is the output of the last hidden layer. This head is designed so that the highest output entry corresponds to the closest prototype. 

\textbf{1-Lipschitz Busemann model.} The core constrained model uses a split Busemann feature map. We always use $\mathrm{ReLU}^2/2$ as the activation for the Busemann layers. Each hidden split block is a composition of single-Busemann updates as defined in \Cref{sec:hypArch}, and in the implementation every such block is preceded by its own learnable hyperbolic isometry. The isometries are defined as $\mathbb{D}^3\ni y\mapsto b\oplus_{\mathbb{D}^3} Qy\in\mathbb{D}^3$ where $Q\in O(3)$ is a learnable orthogonal matrix, $b\in \mathbb{D}^3$ is a learnable bias, and $\oplus_{\mathbb{D}^3}$ stands for the Möbius addition in $\mathbb{D}^3$, see \cite{ganea2018hyperbolic}. The learnable isometries are included because they preserve the network 1-Lipschitz structure and introduce additional flexibility into the model. The layer parameters are constrained so that the layer remains compatible with the nonexpansive construction. We consider two blocks of Busemann steps, each containing $5$ gradient-descent steps and one isometry.

\textbf{1-Lipschitz isometric model.} This model is constrained as well, and provides a 1-Lipschitz baseline useful to judge the complexity of the task. It is obtained using the same structure as the Busemann model, but replacing the blocks of Busemann gradient-descent steps with an additional learnable isometry of $\mathbb{D}^3$.

\textbf{Unconstrained hyperbolic ResNet.} As a second baseline, we use an unconstrained hyperbolic residual network with the same structure as the other two models. The isometric layers are replaced by unconstrained M\"obius affine maps, see \cite[Equations~(27)--(28)]{ganea2018hyperbolic}. The Busemann blocks are replaced by unconstrained Poincar\'e residual steps defined as
\begin{equation}\label{eq:hypResidual}
x\mapsto \exp_x\left(\tau \mathrm{PT}_{0\to x}(W_2\sigma (W_1\log_0 x + b_1)+b_2)\right),\,\,\tau>0,
\end{equation}
where $\mathrm{PT}_{0\to x}$ is the parallel transport from the origin to $x$, $\log_0$ and $\exp_x$ are the logarithmic and exponential maps at the origin and $x$, respectively, and $\sigma$ is a scalar nonlinearity. We use $\sigma=\mathrm{ReLU}$ and learn matrices $W_1,W_2\in\mathbb{R}^{3\times 3}$. 

\begin{algorithm}[t]
\caption{One hyperbolic single-Busemann step}
\label{alg:hyp-single-busemann}
\begin{algorithmic}[1]
\REQUIRE $x\in\mathbb{D}^d$, raw direction $p_{\mathrm{raw}}$, parameters $\lambda,\beta,\tau$, scalar potential $\varphi$
\STATE $p \gets p_{\mathrm{raw}}/\|p_{\mathrm{raw}}\|_2$
\STATE $b \gets b_p(x)=\log \frac{\|x-p\|^2_2}{1-\|x\|^2_2}$
\STATE $g \gets \nabla b_p(x)$
\STATE $s \gets \tau\,\lambda\,\varphi'(\lambda b+\beta)$
\RETURN $\exp_x(-s g)$
\end{algorithmic}
\end{algorithm}

\paragraph{Training and latent dynamics}
All models are trained by minimizing the multiclass cross-entropy loss, which, on one batch $\mathcal{B}$ of size $|\mathcal{B}|$, writes
\[
    \mathcal{L}(\theta)=\frac{1}{|\mathcal{B}|}\sum_{(x_i,y_i)\in\mathcal{B}}\mathrm{CE}\!\left(f_\theta(x_i),y_i\right),
\]
where $f_\theta(x)$ denotes the vector of class scores obtained through the classifier head. We train all the models for $200$ epochs with Adam, batch size $256$, a one-cycle learning-rate schedule from $5\cdot 10^{-4}$ to $5\cdot 10^{-3}$, no weight decay, and a validation set consisting of $10\%$ of the training set. We also clip the gradient norm to $1$ while training. For each model seed, the checkpoint with the best validation accuracy is retained and then evaluated on the held-out test set. The reported comparison uses the three model seeds $7,11,17$. We report in \Cref{fig:decisionRegion} the ensemble decision regions of the three models over the two classification problems.
\begin{figure}[h!]
    \centering
    \includegraphics[width=0.6\linewidth]{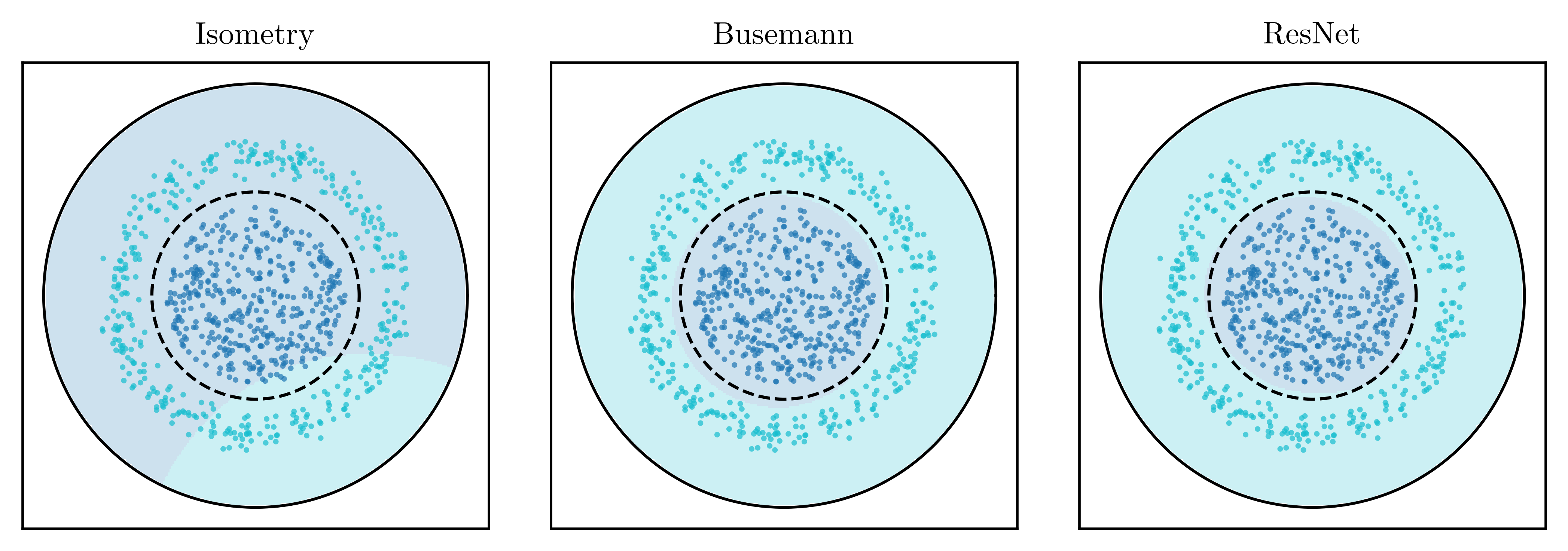}

    \includegraphics[width=0.6\linewidth]{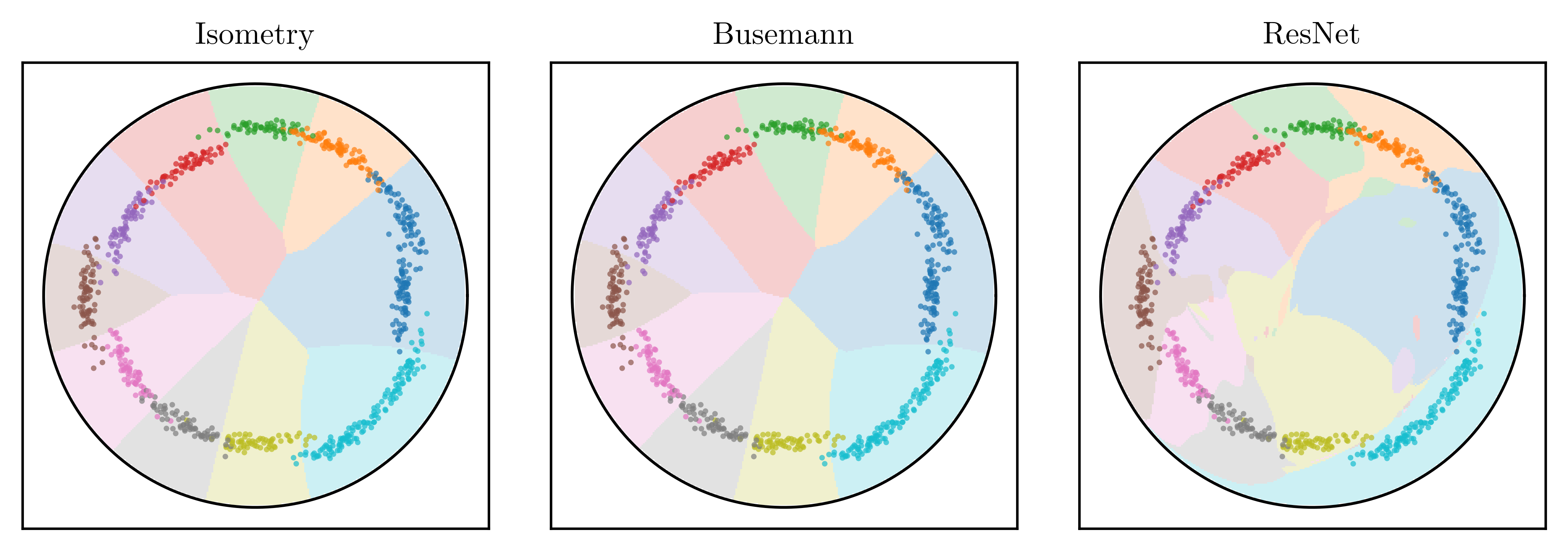}
    \caption{For each model and each dataset, we have three seeds. We present the decision regions derived from the probability vector obtained by averaging the predicted probabilities across the three seeds. To extract probabilities from the logits, we use the $\mathrm{softmax}$ function. For the annular dataset, we include the circular separatrix obtained as $\exp_0(\partial B_{(r_1+r_2)/2,\ell^2}(0))$, which is the ideal decision boundary given our dataset.}
    \label{fig:decisionRegion}
\end{figure}

For each model trained with seed $7$, \Cref{fig:dynamics} shows the evolution of the data points across the successive feature map layers. The plots make the different geometric mechanisms explicit. The isometric model can only relocate the data by constrained hyperbolic isometries. The Busemann model progressively pulls selected regions through nonexpansive Busemann-gradient steps, and the ResNet baseline uses unconstrained residual deformations. The end figure for each model represents, with squares, the learned prototype points $t_1,t_2\in\mathbb{D}^3$ (see \eqref{eq:proto}).

\begin{figure}[h!]
    \centering
    \includegraphics[width=.8\linewidth]{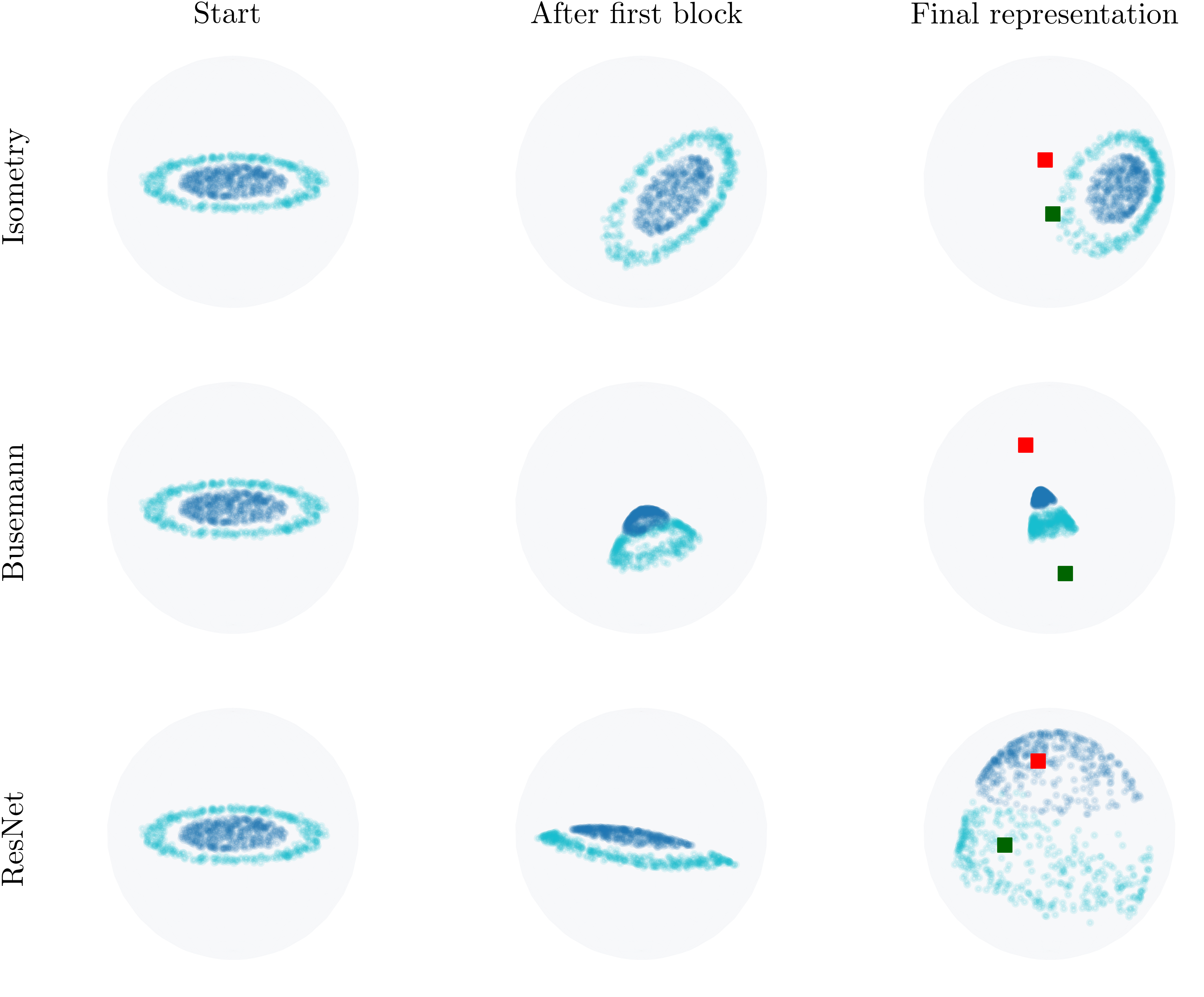}
    \caption{Layer-wise evolution of the annular test data through the feature maps for model seed $7$. Rows correspond to the Isometry, Busemann, and ResNet models, while columns show the initial embedding, the representation after the first block, and the final representation. The Isometry model can only move the data via hyperbolic isometries, whereas the Busemann model performs structured nonexpansive deformations through compositions of Busemann-gradient steps. The ResNet baseline is unconstrained and can produce sharper residual deformations. The square markers in the last column show the pairs of learned prototypes.}
    \label{fig:dynamics}
\end{figure}

\paragraph{Adversarial attacks and robustness evaluation}
Adversarial examples are generated by Riemannian projected gradient ascent in geodesic balls around each test point, using the projected-gradient attack paradigm and the multiclass cross-entropy loss. Each ascent step maps the Euclidean gradient to the Poincar\'e Riemannian gradient, moves with the exponential map, and projects back to the geodesic ball if necessary. For each test point, we run the projected-gradient ascent procedure several times from independently sampled initial points in the geodesic ball $B(x_0,\varepsilon):=\{x\in\mathbb{D}^2:\,\,d(x_0,x)\leq \varepsilon\}$. We refer to these independent initializations as restarts. A point is counted as robustly classified at radius $\varepsilon$ if all restarts fail to find a misclassified point in $B(x_0,\varepsilon)$. We use perturbation radii $\varepsilon \in \{0, 0.1, 0.2, 0.3, 0.4\}$, with $200$ attack iterations, $20$ random restarts, and stepsize $3\varepsilon/200$. The robustness curves in \Cref{fig:hyp-robustness-curves} measure worst-case behavior under repeated local searches in hyperbolic geodesic balls.

\begin{figure}[h!]
    \centering
    \begin{subfigure}[t]{0.49\textwidth}
    \centering
    \includegraphics[width=.85\linewidth]{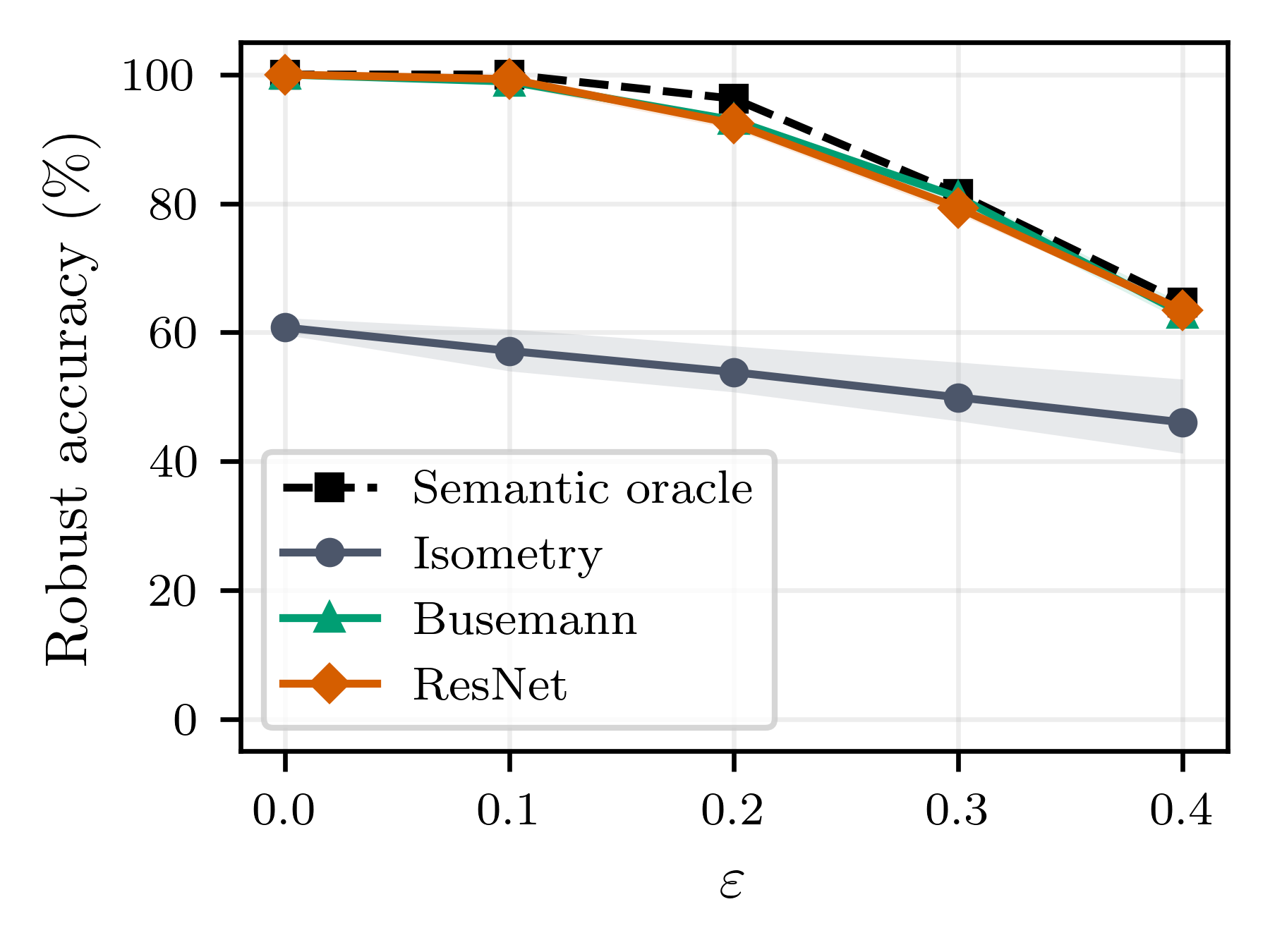}
    \caption{Annular dataset}
    \end{subfigure}
    \begin{subfigure}[t]{0.49\textwidth}
    \centering
    \includegraphics[width=.85\linewidth]{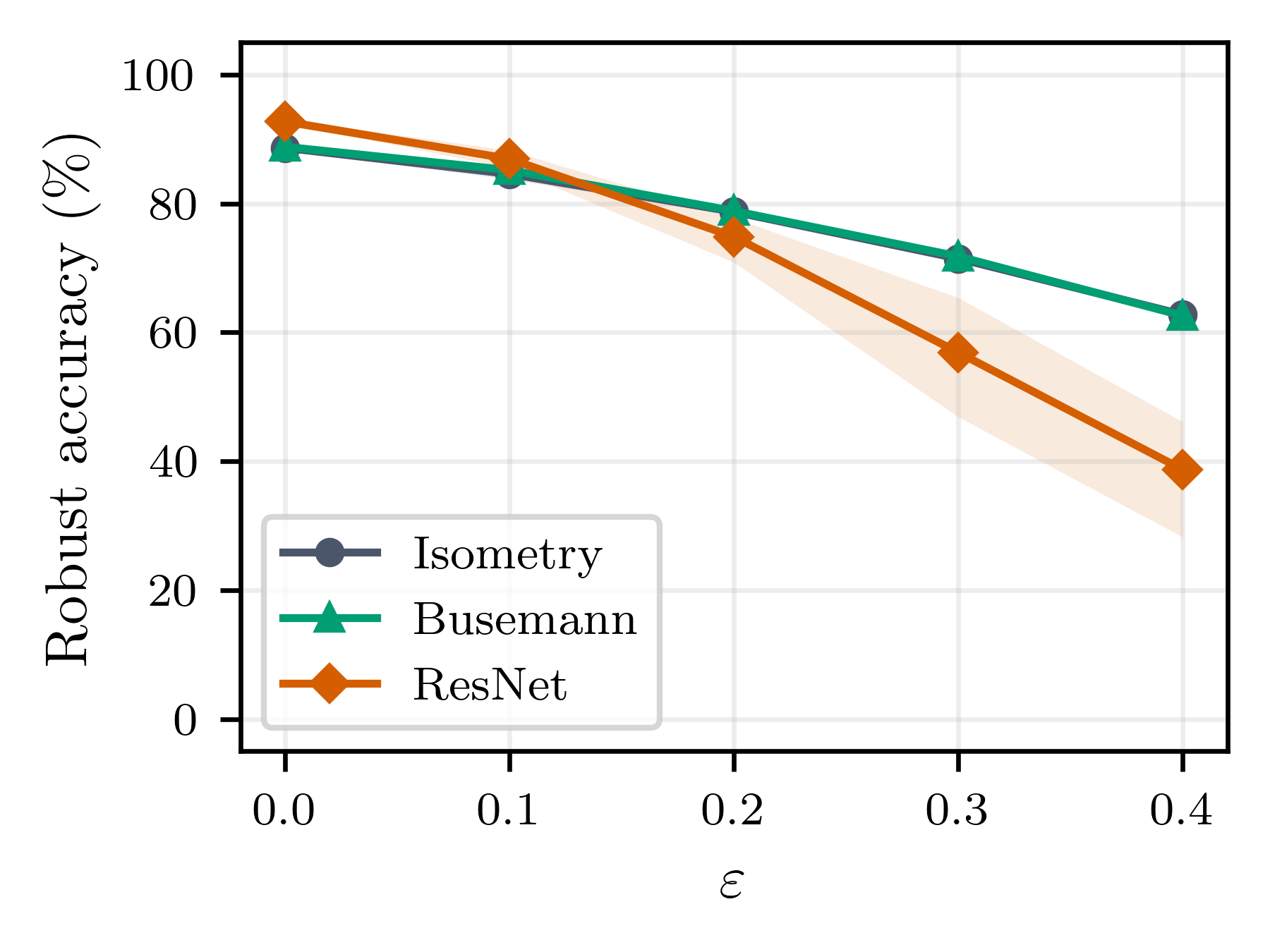}
    \caption{Angular dataset}
    \end{subfigure}
    \caption{Robust accuracy as the adversarial perturbation radius $\varepsilon$ increases. Solid curves show the mean over model seeds $7,11,17$, and the shaded bands show the corresponding min--max envelope across the three seeds. Semantic oracle denotes the reference circle representing the ideal classification boundary in the annular dataset.}
    \label{fig:hyp-robustness-curves}
\end{figure}

\paragraph{Interpretation}
The two datasets highlight complementary aspects of the constructions. On the annular task, the isometric baseline is not expressive enough: its mean clean accuracy is only about $60.8\%$. In contrast, both the Busemann model and the ResNet reach essentially perfect clean accuracy. Their robustness curves are also close to the ideal classifier provided by the intermediate circle, which we call \textit{oracle}. Their normalized Areas Under the Curves (AUCs)\footnote{This is a common quantity used to measure the tradeoff between accuracy and robustness. It is the area under the curve computed using the trapezoidal formula, and divided by $0.4$, the total length of the epsilon range.} are $88.6$ for the Busemann model, $88.2$ for the ResNet, and $90.0$ for the oracle. Thus, in this task, the Busemann layers add geometric expressivity beyond isometries while preserving the nonexpansive structure. In other words, the Busemann layers play a fundamental role in achieving the final test accuracy.

The angular-sector task gives a different message. Here the isometric baseline is already competitive with the Busemann model, indicating that this dataset can largely be solved by moving the prototype geometry with hyperbolic isometries. Nevertheless, the constrained models are more stable than the unconstrained ResNet under attack. The ResNet obtains higher clean accuracy, about $92.7\%$, but its robust accuracy drops to about $38.7\%$ at $\varepsilon=0.4$. The constrained models have lower clean accuracy, about $88.7\%$, but retain about $62.6\%$ robust accuracy at the same radius. This supports the intended role of the nonexpansive constraint. It does not by itself guarantee higher clean accuracy, and it does not always make the Busemann model more expressive than isometries. However, it biases the learned feature map toward more stable geometry under hyperbolic perturbations.

\subsection{Masked-Wishart inverse problem on \texorpdfstring{$S_{++}(10)$}{S_{++}(10)}}
\label{subsec:spd-inverse}

The second experiment studies the reconstruction of an unknown covariance matrix $X_\star\in S_{++}(n)$ from noisy masked principal-block observations. We consider the case $n=10$. Each of the $L$ masks $\ell=1,\dots,L$ selects a coordinate set
$I_\ell\subset\{1,\dots,n\}$ of $p=|I_\ell|$ coordinates. We denote by
$P_\ell\in\mathbb{R}^{p\times n}$ the corresponding coordinate-selection
matrix, so that $P_\ell x=x_{I_\ell}\in\mathbb{R}^p$. If
$x\sim\mathcal{N}(0,X_\star)$, then the observed subvector $P_\ell x$ is
Gaussian with covariance
\[
    C_\ell(X_\star)=P_\ell X_\star P_\ell^\top .
\]
For each mask, we draw $m=20$ independent samples
$y_{\ell,1},\dots,y_{\ell,m}\sim
\mathcal{N}(0,C_\ell(X_\star))$ and observe the empirical covariance
\[
    S_\ell
    =
    \frac{1}{m}
    \sum_{r=1}^{m} y_{\ell,r}y_{\ell,r}^\top .
\]
Equivalently, $mS_\ell$ follows the Wishart law
$W_p(C_\ell(X_\star),m)$
\cite{wishart1928generalised,muirhead1982aspects}. For a candidate covariance
$X$, the likelihood measures how compatible the predicted masked covariance
$C_\ell(X)$ is with the observed empirical covariance $S_\ell$. Since \(mS_\ell\sim W_p(C_\ell(X),m)\), the negative log-likelihood
of the observed sample covariances, up to additive constants independent
of \(X\), is
\begin{equation}\label{eq:negLog}
    F(X)
    = \frac{m}{2}\sum_\ell
      \left(
      \log\det C_\ell(X)
      + \operatorname{tr}\!\left(C_\ell(X)^{-1}S_\ell\right)
      \right).
\end{equation}
This objective is
the data-consistency term. It favors covariances whose observed principal
blocks explain the empirical covariances $S_\ell$.

The masks are fixed overlapping principal blocks. In the reported experiment, we set $n=10$, block size $p=5$, and $L=3$, with coordinate sets
\[
I_1=\{1,\dots,5\},\qquad
I_2=\{3,\dots,7\},\qquad
I_3=\{6,\dots,10\}.
\]
Thus $P_\ell X P_\ell^\top$ extracts a $5\times 5$ principal submatrix of
$X$. The induced observed-entry pattern is shown in \Cref{fig:spd-mask-pattern}.

The target covariances are sampled from a synthetic AR(1)-ensemble family. We
use the stationary Gaussian AR(1) covariance model
\cite{brockwell1991time}: for autocorrelation parameter $\rho$, the
covariance matrix has Toeplitz form
\[
    \Sigma(\rho)_{ij}=\rho^{|i-j|},\,\,i,j=1,\dots,n.
\]
Each target is generated by drawing
i.i.d. $\rho_1,\rho_2,\rho_3\sim\mathcal{U}(0.2,0.95)$ and setting
\[
    X_\star=\frac{1}{3}\sum_{q=1}^3 \Sigma(\rho_q).
\]
Since only masked blocks
are observed and the observations are noisy, Riemannian gradient descent with objective \eqref{eq:negLog} alone does not
encode the global covariance structure of the AR(1)-ensemble family. We
therefore solve the reconstruction problem with a Plug-and-Play (PnP) scheme that
alternates an affine-invariant gradient step for $F$ with a learned
SPD-valued denoiser, described below.
\begin{figure}[t]
    \centering
    \includegraphics[width=.25\linewidth]{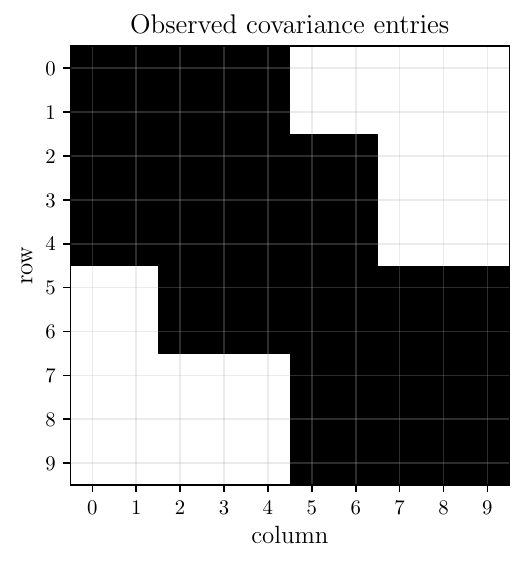}
    \caption{Observed-entry pattern induced by the three overlapping principal
    blocks. Colored cells are entries that appear in at least one observed
    $5\times5$ block.}
    \label{fig:spd-mask-pattern}
\end{figure}

\paragraph{Denoisers} We train two SPD-valued denoisers on full Wishart sample-covariance
observations generated from the same AR(1)-ensemble family. The supervised
training set contains $500$ clean covariances. For each one, a full Wishart
sample covariance is generated and used as the noisy input. The training task
maps this full noisy covariance estimate to its clean covariance target. More precisely, for each training target $X_{\star}^{(j)}$, $j=1,\dots,500$,
we draw $m=20$ independent samples
$z_{j,1},\dots,z_{j,m}\sim\mathcal{N}(0,X_{\star}^{(j)})$ and form
\[
    S^{(j)}=\frac{1}{m}\sum_{i=1}^m z_{j,i}z_{j,i}^\top .
\]
The denoiser is trained to map $S^{(j)}$ to $X_{\star}^{(j)}$.
The first model is the constrained split Busemann denoiser
$D_\theta:S_{++}(10)\to S_{++}(10)$. It comprises six
blocks, each consisting of a learnable congruence isometry
$X\mapsto QXQ^\top$, with $Q\in O(10)$, followed by nine
affine-invariant Busemann gradient steps. The Busemann potentials use
the scalar nonlinearity $\mathrm{ReLU}^2/2$. For more details, see \Cref{se:spd_def}. 
We enforce the stepsize constraint in \eqref{eq: condition for nonexpansiveness} to ensure that the
denoising map is nonexpansive in the affine-invariant metric.

The second model is a Log-Euclidean residual denoiser
\[
    D_\theta(X)
    =
    \exp\left(\log X+\operatorname{sym}(R_\theta(\log X))\right),
    \qquad
    \operatorname{sym}(A)=\frac{A+A^\top}{2}.
\]
Here $R_\theta$ is a feedforward ReLU network applied in logarithmic
coordinates. In the implementation, $\log X$ is flattened into an
$n^2$-dimensional vector, passed through the network, reshaped back into an
$n\times n$ matrix, and then symmetrized before applying the matrix
exponential. For $n=10$, the network has width $100$ and two affine-ReLU
hidden stages before the output layer. This baseline is SPD-valued by construction, but it is not constrained to be nonexpansive in the
affine-invariant geometry. 

Both denoisers are trained for $100$ epochs in
\texttt{float64}, then frozen for the Plug-and-Play reconstruction.

\paragraph{Plug-and-Play reconstruction}
Given an iterate $X_k$, the PnP update first applies one affine-invariant
likelihood step for the masked-Wishart objective,
\[
    Z_k=\exp_{X_k}\!\left(-\tau\grad F(X_k)\right),
\]
where the affine-invariant Riemannian gradient is computed by converting the closed-form expression of the Euclidean gradient:
\begin{align*}
\nabla F(X) &= \frac{m}{2}\sum_\ell P_\ell^\top\left(C_\ell(X)^{-1}-C_\ell(X)^{-1}S_\ell C_\ell(X)^{-1}\right)P_\ell,\\
\grad F(X)&=X \nabla F(X) X.
\end{align*}
The matrix $Z_k$ is then averaged with the denoiser output along the affine-invariant
geodesic,
\begin{equation}\label{eq:denoising_avg}
    X_{k+1}=Z_k\#_\alpha D_\theta(Z_k),
\end{equation}
where
\[
    A\#_\alpha B
    =
    A^{1/2}\bigl(A^{-1/2}BA^{-1/2}\bigr)^\alpha A^{1/2}.
\]
This is the weighted geometric mean or geodesic convex combination of positive definite matrices
\cite[Eq.~(6.18)]{bhatia07pdm}. If $\alpha=1$, the map relies entirely on the learned denoiser. We remark that when $D_\theta$ is nonexpansive, \Cref{trm: alpha averaged operators are quasi alpha FME} applies, and ensures that the map in \eqref{eq:denoising_avg} is quasi-$\alpha$-firmly nonexpansive for $\alpha\in (0,1)$, and hence iterating only such steps without the data-fidelity step would ensure convergence to a fixed point, in case it exists (see \Cref{corollary: convergence to fix point for quasi firmly nonexpansive operators}). This is confirmed empirically in \Cref{fig:spd-denoiser-residuals}. The full PnP loop additionally
contains the data step, so the convergence theory is not used as a proof for
the inverse-problem iteration. Still, the empirical results support the validity of the proposed strategy.

We use $32$ validation observation-target
pairs and $200$ held-out test pairs using the masked-Wishart model above.
The parameters $\tau$, $\alpha$, and the stopping iterate $T$ are selected
on validation by minimizing mean $d_{\mathrm{AI}}(X_T,X_\star)$, and then fixed
for test evaluation. \Cref{fig:spd-tuning-heatmaps} shows the validation
grid used to select the PnP parameters for the two denoisers.

\begin{figure}[htbp]
    \centering
    \begin{minipage}{.48\linewidth}
        \centering
        \includegraphics[width=\linewidth]{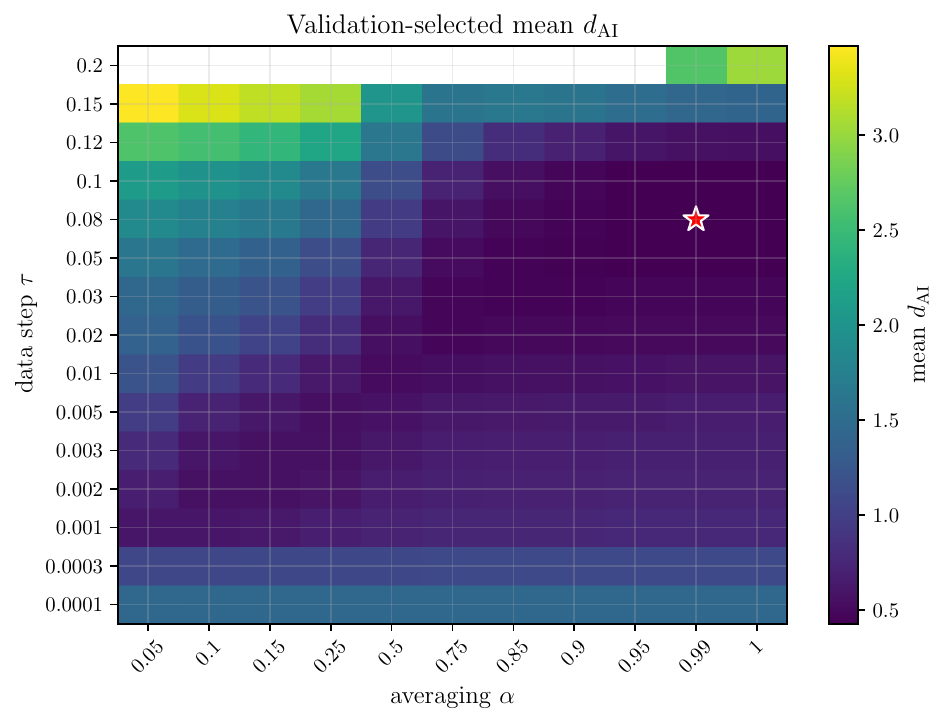}
        \smallskip

        \small Split Busemann
    \end{minipage}
    \hfill
    \begin{minipage}{.48\linewidth}
        \centering
        \includegraphics[width=\linewidth]{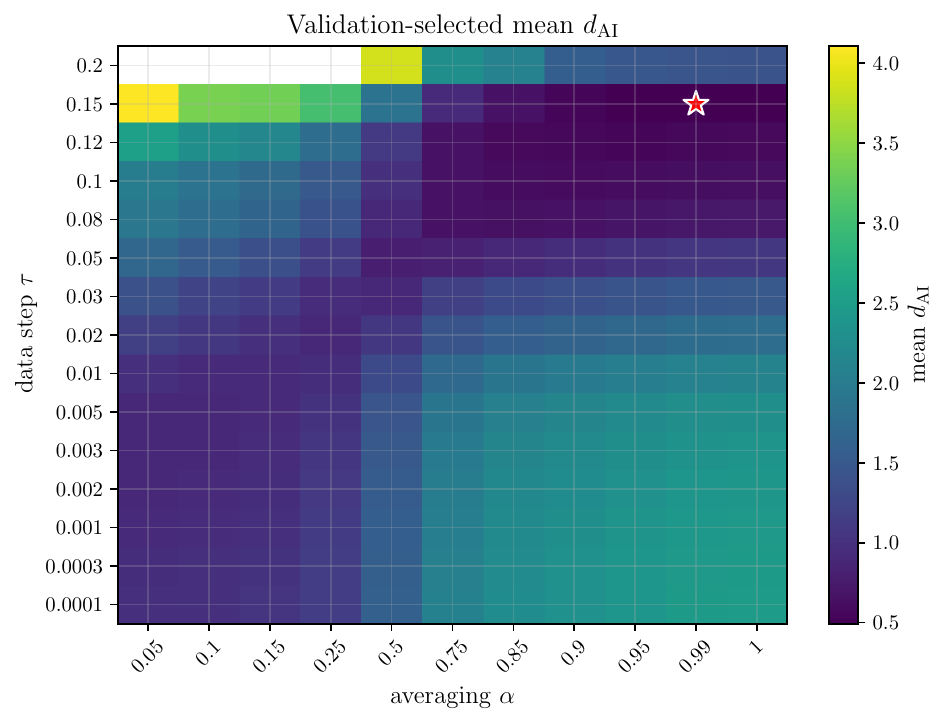}
        \smallskip

        \small Log-Euclidean
    \end{minipage}
    \caption{Validation tuning for the PnP reconstruction. For each denoiser,
    the full validation grid is run over the tested initializations, data steps
    $\tau$, averaging parameters $\alpha$, and stopping times. The heatmap then
    shows the $(\tau,\alpha)$ slice corresponding to the initialization with
    the best overall validation result for that denoiser. The star marks this
    global validation minimizer. \Cref{tab:spd-main-results} also reports
    the best validation-selected parameters separately for each listed
    initialization. White cells correspond to non-finite or failed runs, which
    occur only for overly aggressive data steps and are not selected.}
    \label{fig:spd-tuning-heatmaps}
\end{figure}

\paragraph{Baselines and metrics}
In addition to comparing the two denoisers, we benchmark different variations of the procedure. We use two types of baselines. First, we report static reconstructions that do
not use the masked observations: the identity matrix, the Euclidean and Log-Euclidean means of the $500$
training covariances used to train the denoisers.
Second, we report a data-only baseline, obtained by affine-invariant gradient
descent on the masked-Wishart objective $F$ without any denoising step. The likelihood-only baseline selects its own data stepsize and stopping time using the same validation protocol as the PnP methods. The primary metric is the affine-invariant distance $d_{\mathrm{AI}}(X,X_\star)$. We also record the masked-Wishart objective $F(X)$, log-Euclidean error, relative Frobenius error, observed-entry error, and unobserved-entry error. Qualitative examples are selected using relative Frobenius error, because those panels visualize entrywise matrix errors. All quantitative tables and tuning decisions use the affine invariant distance.

For the different procedures, we also test different initial guesses for the iterations. We start either from the identity matrix, the training Euclidean mean, or the training Log-Euclidean mean $\overline X_{\mathrm{LE}}=\exp\bigl(N_{\mathrm{tr}}^{-1}\sum_{i=1}^{N_{\mathrm{tr}}}\log X_i\bigr)$.

\begin{table}[htbp]
    \centering
    \small
    \begin{tabular}{lcccc}
        \hline
        method & initialization & $\tau$ & $\alpha$ & mean $d_{\mathrm{AI}}\downarrow$ \\
        \hline
        identity & identity & -- & -- & 2.873 \\
        Euclidean training mean & -- & -- & -- & 0.845 \\
        Log-Euclidean training mean & -- & -- & -- & 0.859 \\
        data-only descent & Euclidean mean & $10^{-4}$ & -- & 0.789 \\
        split Busemann PnP & identity & 0.08 & 0.99 & 0.369 \\
        split Busemann  PnP & Euclidean mean & 0.10 & 0.99 & \textbf{0.368} \\
        split Busemann  PnP & Log-Euclidean mean & 0.10 & 0.99 & \textbf{0.368} \\
        Log-Euclidean PnP & identity & 0.12 & 0.95 & 0.506 \\
        Log-Euclidean PnP & Euclidean mean & 0.15 & 1.00 & 0.480 \\
        Log-Euclidean PnP & Log-Euclidean mean & 0.15 & 0.99 & 0.475 \\
        \hline
    \end{tabular}
    \caption{Main test-set reconstruction results. All data-only and PnP
    parameters, including the stopping time, are selected based on the validation set and then
    frozen for test evaluation. The selected stopping times are $T=150$ for
    data-only descent from the Euclidean mean, $T=5$ for split PnP from
    identity, $T=50$ for split PnP from either training mean, and $T=10$
    for the Log-Euclidean PnP rows. We tune $\alpha\in(0,1]$, where $\alpha=1$ corresponds to applying the raw denoiser without geodesic relaxation. The averaged nonexpansive interpretation applies only when $\alpha\in(0,1)$ and the denoiser is nonexpansive.}
    \label{tab:spd-main-results}
\end{table}

\begin{table}[htbp]
    \centering
    \small
    \begin{tabular}{lccc}
        \hline
        PnP model & baseline & \makecell{mean\\improvement} $\uparrow$ & \makecell{fraction\\improved} $\uparrow$ \\
        \hline
        split Busemann & Euclidean training mean & 0.477 & 0.75 \\
        split Busemann & Log-Euclidean training mean & 0.491 & 0.78 \\
        split Busemann & data-only, Euclidean mean init & 0.422 & 0.855 \\
        Log-Euclidean & Euclidean training mean & 0.364 & 0.76 \\
        Log-Euclidean & Log-Euclidean training mean & 0.379 & 0.74 \\
        Log-Euclidean & data-only, Euclidean mean init & 0.309 & 0.78 \\
        \hline
    \end{tabular}
    \caption{Paired per-sample improvements on the test set. For each test
    covariance, we compute
    $d_{\mathrm{AI}}(X_{\mathrm{baseline}},X_\star)
    -d_{\mathrm{AI}}(X_{\mathrm{PnP}},X_\star)$ using the same ground-truth
    target $X_\star$. The ``mean improvement'' column averages this quantity
    over the test set. The ``fraction improved'' column is the fraction of test
    samples for which this paired difference is positive, i.e., for which the
    selected PnP reconstruction has smaller affine-invariant error than the baseline. For every row, $X_{\mathrm{PnP}}$ is obtained by initializing the corresponding PnP iteration at the Euclidean training mean; its parameters $\tau$, $\alpha$, and $T$ are those selected on validation for that initialization.}
    \label{tab:spd-paired-improvements}
\end{table}

\begin{figure}[htbp]
    \centering
    \begin{minipage}{.48\linewidth}
        \centering
        \includegraphics[width=\linewidth]{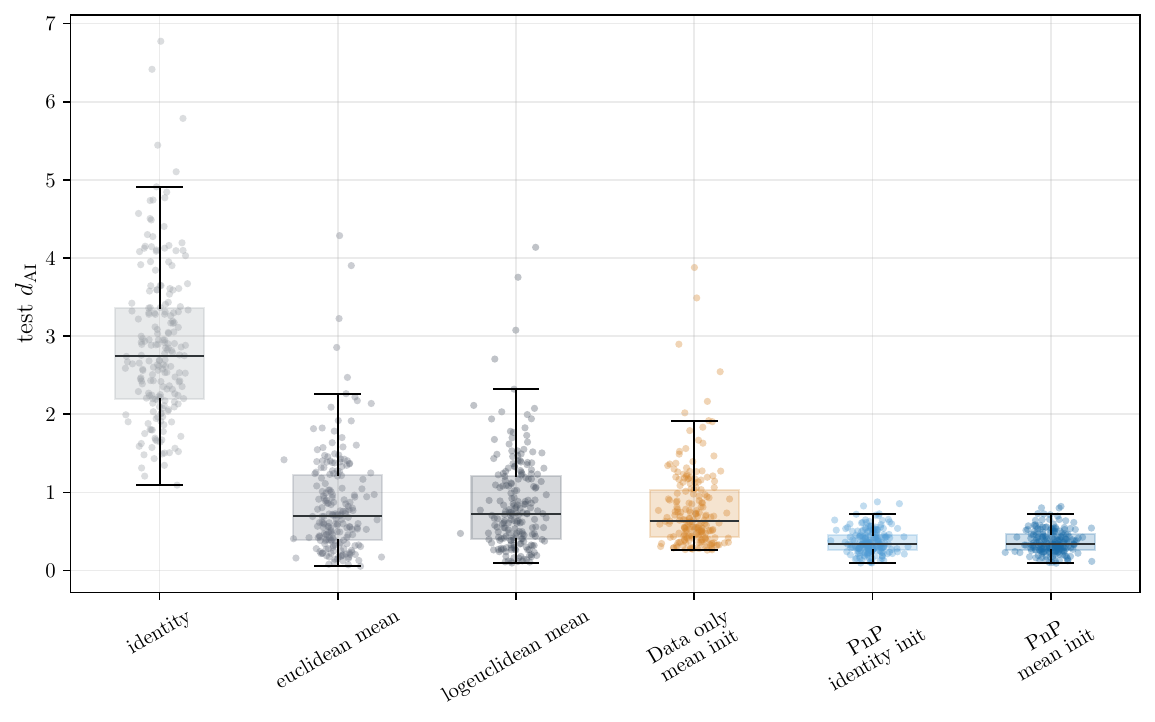}
        \smallskip

        \small Split Busemann
    \end{minipage}
    \hfill
    \begin{minipage}{.48\linewidth}
        \centering
        \includegraphics[width=\linewidth]{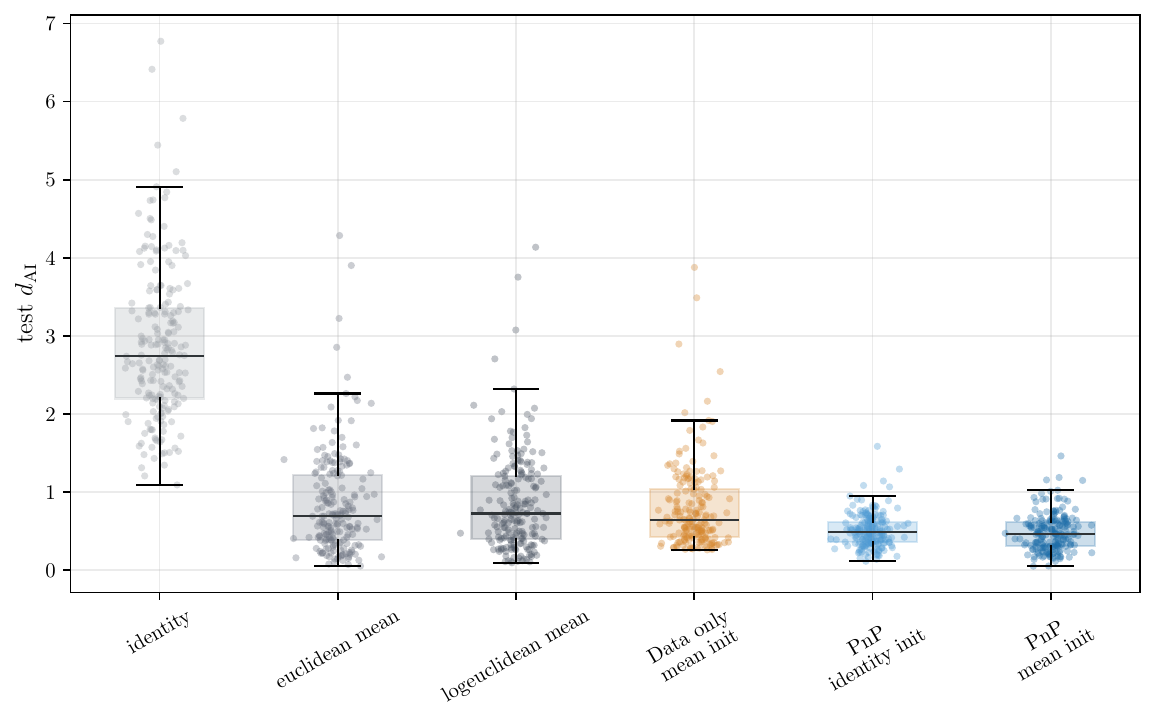}
        \smallskip

        \small Log-Euclidean
    \end{minipage}
    \caption{Per-sample test reconstruction errors corresponding to the methods
    and validation-selected configurations reported in
    \Cref{tab:spd-main-results}. For each denoiser and initialization, the PnP
    parameters $(\tau,\alpha,T)$ are selected independently on the validation set
    and then fixed for test evaluation; the data-only parameters $(\tau,T)$ are
    selected analogously. The boxplots summarize
    $d_{\mathrm{AI}}(X_T,X_\star)$ over the 200 held-out test instances, while the
    jittered points show the individual errors. The horizontal jitter is used
    only to improve visibility and has no quantitative meaning. The identity and
    training-mean entries are static baselines and require no parameter
    selection.}
    \label{fig:spd-final-distributions}
\end{figure}

\paragraph{Reconstruction results}
\Cref{tab:spd-main-results} shows that the training mean is already a
strong prior baseline for this concentrated AR(1)-ensemble. The Euclidean
training mean gives mean test error $0.845$, and data-only descent from that
mean improves it only to $0.789$. The split Busemann PnP reconstruction
reduces the error to about $0.368$ from either training-mean initialization. This shows that this denoiser is stable to initialization. We conjecture that this is a consequence of the denoiser's nonexpansiveness.
The Log-Euclidean denoiser also improves over data-only reconstruction, but its
best PnP row is $0.475$, which is weaker than the split Busemann
result. \Cref{fig:spd-final-distributions} shows the full test-error
distributions, while \Cref{tab:spd-paired-improvements} reports paired
improvements on the same test instances. The split model improves over the
data-only Euclidean-mean baseline on $85.5\%$ of test samples. \Cref{fig:spd-reconstruction-examples} gives representative correlation
matrices for a qualitative view of these reconstruction differences.

\begin{figure}[htbp]
    \centering
    \begin{minipage}{.48\linewidth}
        \centering
        \includegraphics[width=\linewidth]{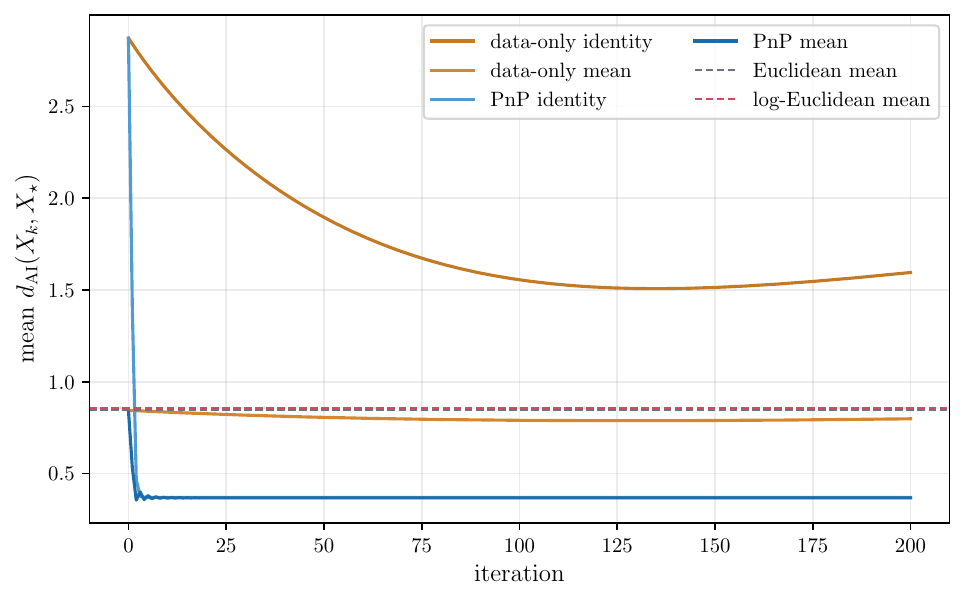}
        \smallskip

        \small Split Busemann
    \end{minipage}
    \hfill
    \begin{minipage}{.48\linewidth}
        \centering
        \includegraphics[width=\linewidth]{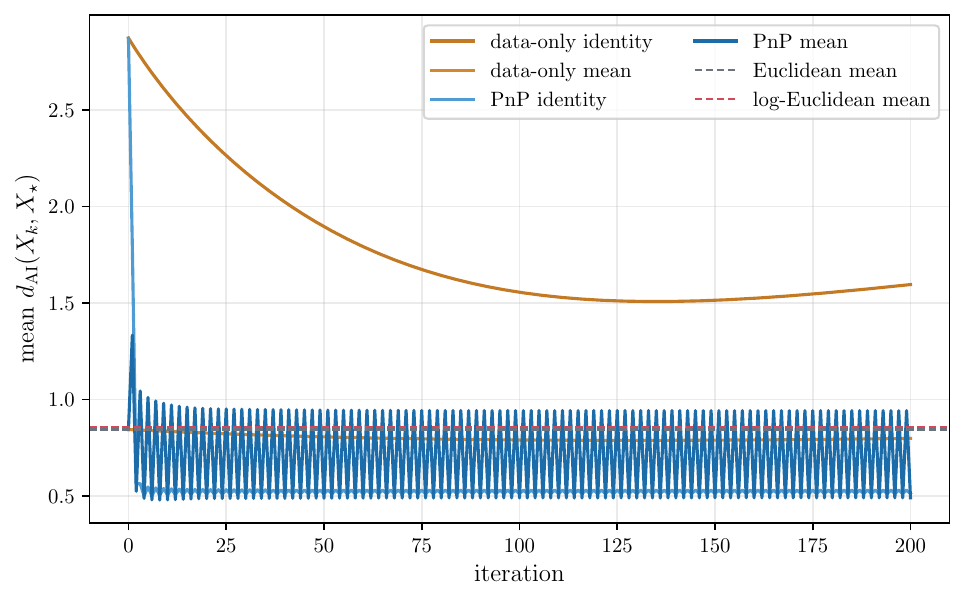}
        \smallskip

        \small Log-Euclidean
    \end{minipage}
    \caption{Mean target-error trajectories on the test set using the
    validation-selected stepsizes. At iteration $k$, the vertical axis is
    $N_{\mathrm{test}}^{-1}\sum_{b=1}^{N_{\mathrm{test}}}
    d_{\mathrm{AI}}(X_k^{(b)},X_\star^{(b)})$. These curves are diagnostic
    rollouts. The reported table values use validation-selected stopping times,
    while the plots in this figure show how the same dynamics behave when continued longer. The keywords \textit{identity} and \textit{mean} refer to the initial guess $X_0$, which is either set to the identity or the Euclidean mean, respectively.}
    \label{fig:spd-target-trajectories}
\end{figure}

\begin{figure}[htbp]
    \centering
    \begin{minipage}{.48\linewidth}
        \centering
        \includegraphics[width=\linewidth]{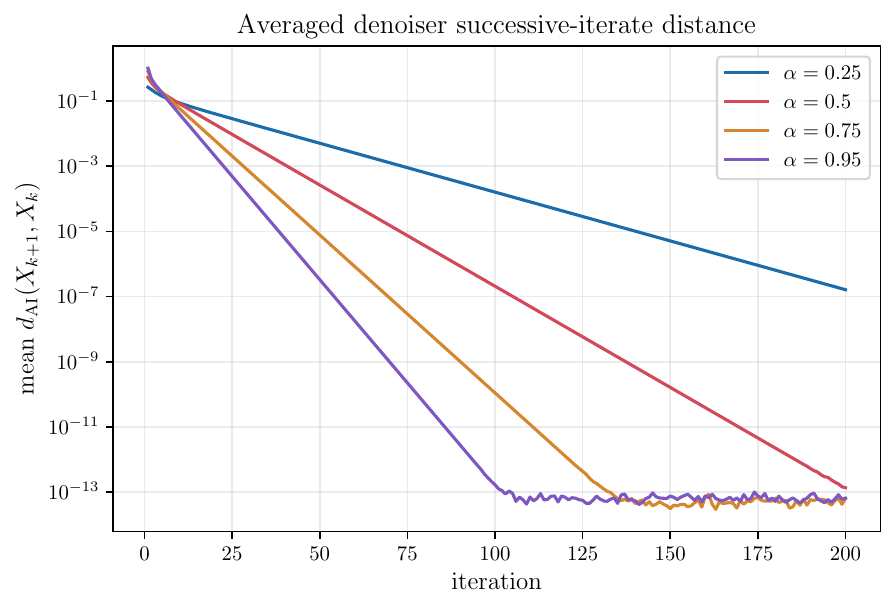}
        \smallskip

        \small Split Busemann
    \end{minipage}
    \hfill
    \begin{minipage}{.48\linewidth}
        \centering
        \includegraphics[width=\linewidth]{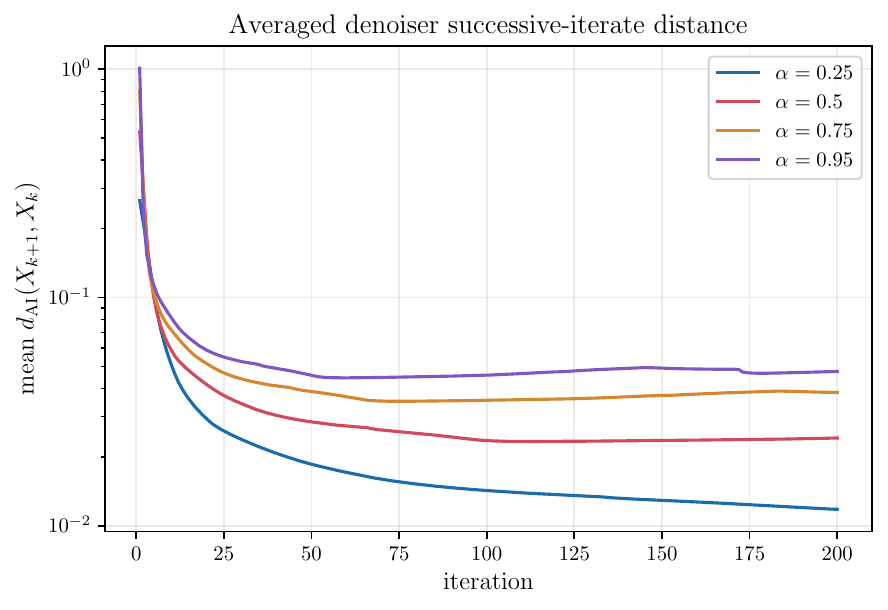}
        \smallskip

        \small Log-Euclidean
    \end{minipage}
    \caption{Averaged denoiser-only stability diagnostic. Each curve plots the
    mean successive-iterate distance
    $d_{\mathrm{AI}}(X_{k+1},X_k)$ for
    $X_{k+1}=X_k\#_\alpha D_\theta(X_k)$. This isolates the averaged denoiser
    from the masked-Wishart data step, which is the component covered by the
    nonexpansive fixed-point theory.}
    \label{fig:spd-denoiser-residuals}
\end{figure}

\paragraph{Stability interpretation}
\Cref{fig:spd-target-trajectories}
explains why early stopping is part of the reconstruction protocol. We can see that the validation-selected dynamics can initially reduce the test error but may deteriorate when continued beyond the selected stopping time. This is visible in the tuning heatmaps as failed cells for
large $\tau$, see \Cref{fig:spd-tuning-heatmaps}. The oscillations visible in the Log-Euclidean trajectory indicate that the
Log-Euclidean PnP dynamics are more sensitive to early stopping, whereas the
split Busemann dynamics remain more stable in this long-run diagnostic. \Cref{fig:spd-denoiser-residuals} isolates the denoiser-only averaged
map $X\mapsto X\#_\alpha D_\theta(X)$. This is the part of the method for
which the nonexpansive theory applies. In a Hadamard space, averaged
nonexpansive iterations have fixed-point convergence guarantees
when the fixed-point set is nonempty. The decay of
$d_{\mathrm{AI}}(X_{k+1},X_k)$ in the split-Busemann panel is therefore a
diagnostic support for the intended averaged-map behavior. It is still not a
convergence proof for the full PnP loop, because the full loop also contains
the masked-Wishart gradient step.

\begin{figure}[htbp]
    \centering
    \begin{minipage}{.48\linewidth}
        \centering
        \includegraphics[width=\linewidth]{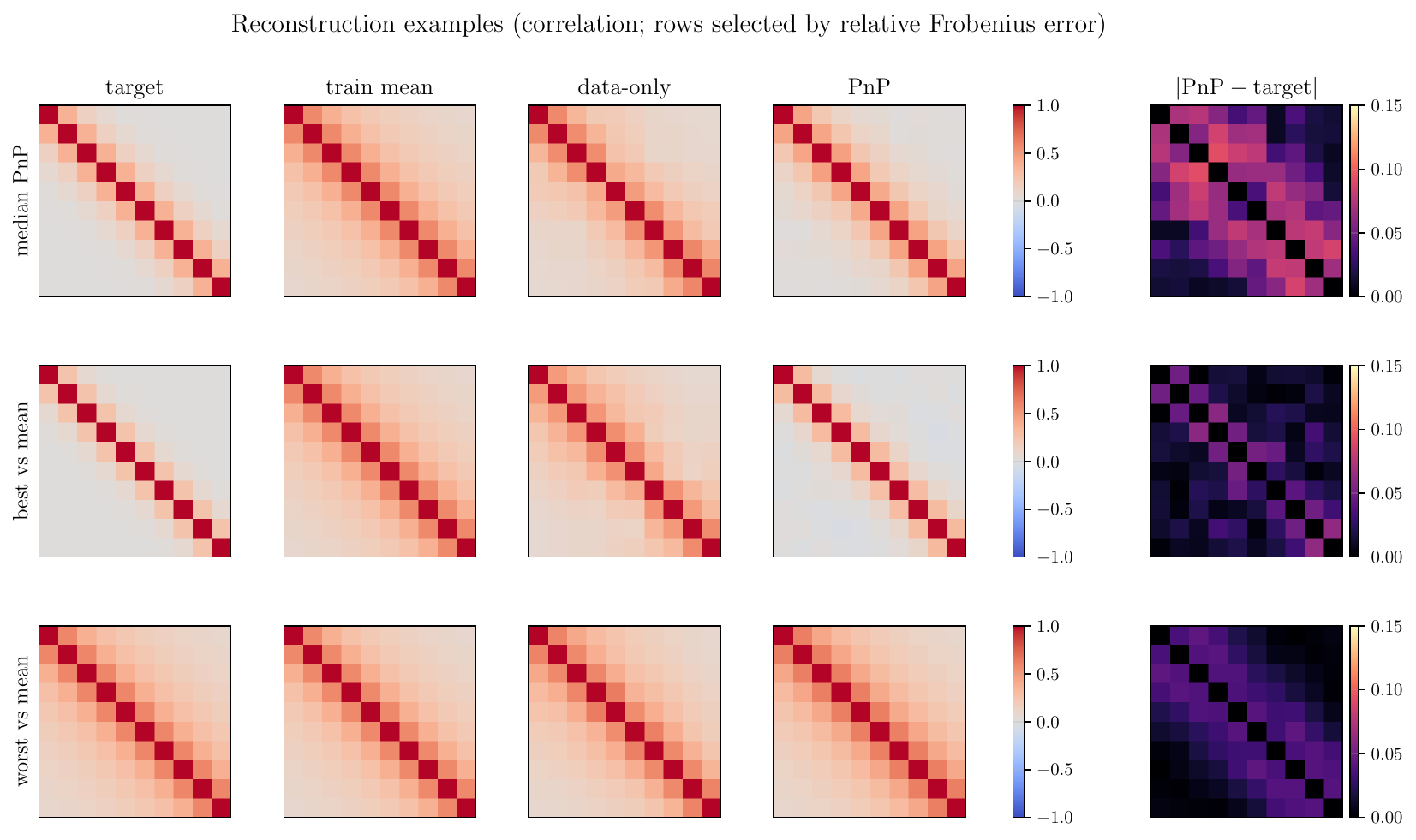}
        \smallskip

        \small Split Busemann
    \end{minipage}
    \hfill
    \begin{minipage}{.48\linewidth}
        \centering
        \includegraphics[width=\linewidth]{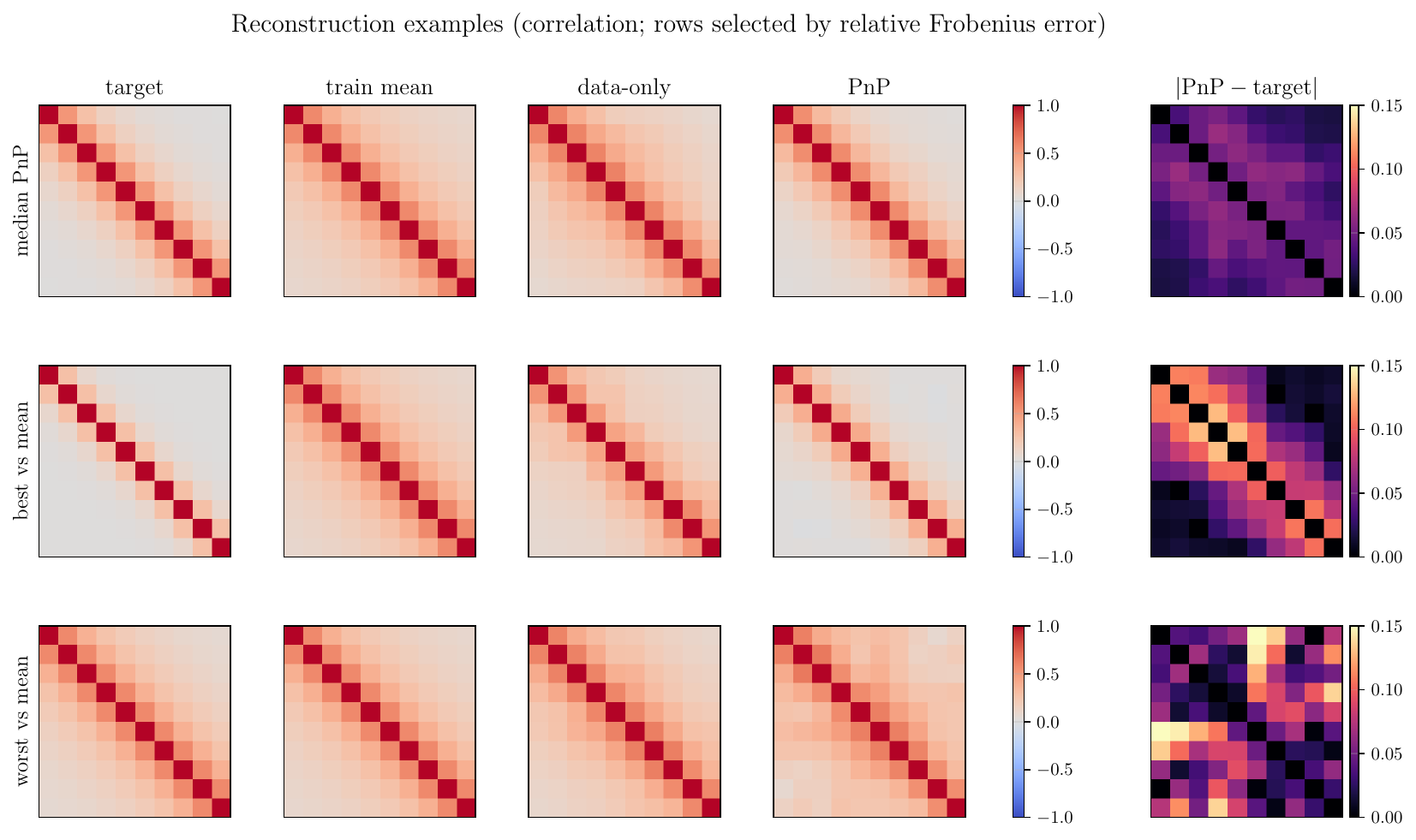}
        \smallskip

        \small Log-Euclidean
    \end{minipage}
    \caption{Representative correlation reconstructions. Rows are selected by
    relative Frobenius error to make the entrywise visual comparison
    interpretable: median PnP error, best PnP improvement over the training
    mean, and worst PnP change relative to the training mean. Error panels use a
    common color scale $[0,0.15]$ across both denoisers.}
    \label{fig:spd-reconstruction-examples}
\end{figure}

\begin{figure}[htbp]
    \centering
    \begin{minipage}{.48\linewidth}
        \centering
        \includegraphics[width=\linewidth]{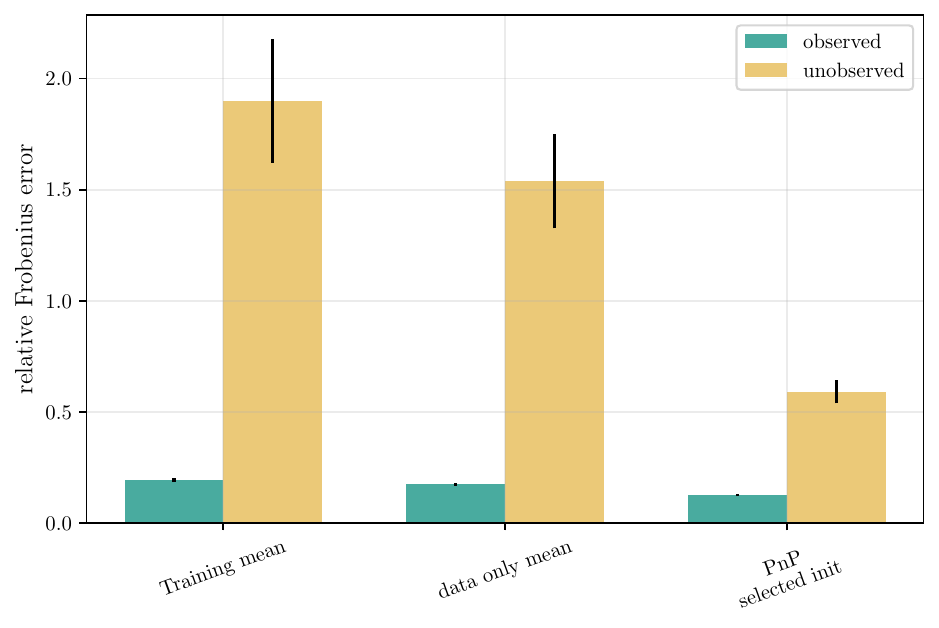}
        \smallskip

        \small Split Busemann
    \end{minipage}
    \hfill
    \begin{minipage}{.48\linewidth}
        \centering
        \includegraphics[width=\linewidth]{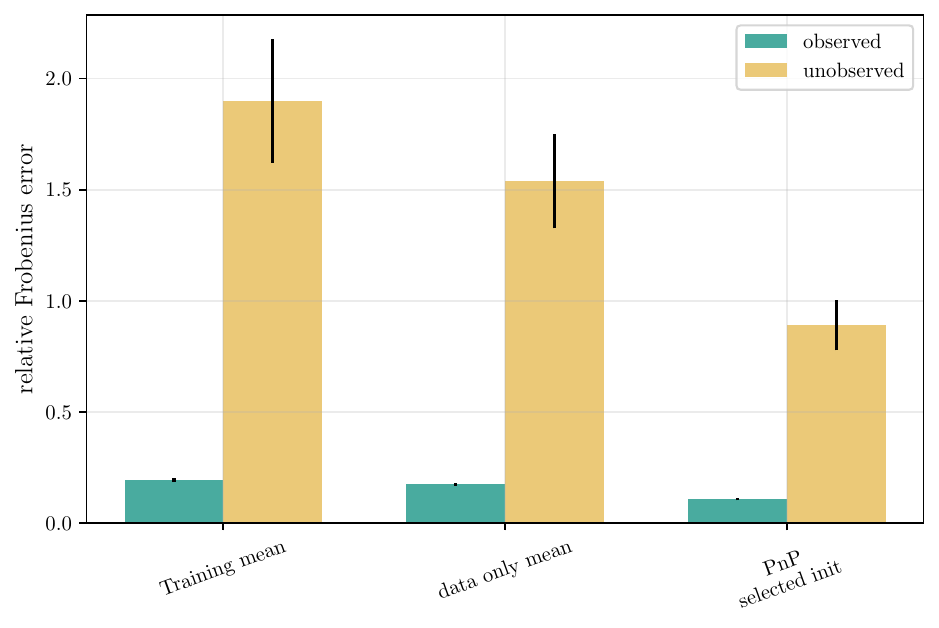}
        \smallskip

        \small Log-Euclidean
    \end{minipage}
    \caption{Observed-entry and unobserved-entry reconstruction errors for the
    two denoisers. The measured principal blocks determine only part of the
    covariance matrix. These diagnostics check whether the learned prior also
    improves entries not directly covered by the masks, rather than only
    fitting the observed blocks.}
    \label{fig:spd-extra-diagnostics}
\end{figure}

The observed/unobserved diagnostics in \Cref{fig:spd-extra-diagnostics} check that the improvement is not
limited to entries directly included in the measured principal blocks. This is important because the masks underdetermine the inverse problem. Therefore, good performance on unobserved entries indicates that the learned denoiser is adding
global covariance structure rather than only fitting the available blocks.

\paragraph{Conclusion for the SPD experiment}
The masked-Wishart experiment shows that a constrained split Busemann denoiser can be used as an effective learned prior in an SPD inverse problem. Under validation-selected PnP inference, the split Busemann denoiser substantially improves over both baselines and over the Log-Euclidean SPD-valued denoiser. The theory motivates the averaged denoising component, while the complete data-plus-denoiser reconstruction is evaluated empirically using validation tuning and held-out test instances.

\section{Conclusion}
\label{sec:conclusion}

In this paper, we introduced a new neural network architecture on Hadamard manifolds. This model is practically implementable for manifolds where a computationally efficient closed-form expression for Busemann functions is available. We theoretically analyzed such an architecture, proving that it is 1-Lipschitz when considering small enough step sizes in the manifold-valued residual layers. The derived stepsize restriction is cheap to evaluate and practically implementable. We also extended such a theory to product manifolds, providing a viable theoretical framework for future extensions to manifold-valued graphs and images. The proposed methodology was then empirically validated by two numerical experiments. The first considers the task of robustly classifying points in the Poincar\'e disk $\mathbb{D}^2$. The second considers an underdetermined covariance-matrix recovery problem.

The core focus of this paper has been on developing the methodology and comprehensively studying its theoretical aspects from the perspective of stability and convergence. Future extensions will consider image- and graph-focused applications, such as for diffusion tensor imaging. Our architecture relies on the concept of Busemann functions. A promising further extension of our methodology is to consider alternative building blocks which allow for a similar design principle in a broader class of Riemannian manifolds, extending the availability of 1-Lipschitz manifold-valued neural networks to a wider range of Riemannian manifolds.

\section*{Acknowledgements}
DM acknowledges support from the EPSRC programme grant in ‘The Mathematics of Deep Learning’, under the project EP/V026259/1.
CBS acknowledges support from the Royal Society Wolfson Fellowship, the EPSRC advanced career fellowship EP/V029428/1, the EPSRC programme grant EP/V026259/1, the Wellcome Innovator Awards 215733/Z/19/Z and 221633/Z/20/Z, the EPSRC funded ProbAI hub EP/Y028783/1. BA acknowledges support from the Natural Sciences and Engineering Research Council of Canada (NSERC) through grant RGPIN/2026-04531. All the authors acknowledge support from the EU through the Marie Sk\l{}odowska-Curie Actions Staff Exchanges project REMODEL, grant agreement 101131557.

\bibliographystyle{plain}
\bibliography{ManifoldNeuODE}

\newpage
\appendix

\section{Proofs omitted in the main document}
\label{appendix: proofs omitted in the main document}
We now provide a proof of \Cref{lemma: properties of Busemann functions on Hadamard manifolds} after restating it.
\begin{lemma*}
    Let $(\mathcal{M}, g)$ be a Hadamard manifold. Every Busemann function $b$ on $\M$ is geodesically convex, 1-Lipschitz, and of class $\C^{2}$. 
\end{lemma*}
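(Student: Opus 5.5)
The plan is to handle the three properties separately, in increasing order of difficulty: the Lipschitz bound, then convexity, then regularity. All three will be derived from the functions $f_s(x):=d(\gamma(s),x)-s$, whose pointwise limit as $s\to+\infty$ is $b$ by \eqref{eq: busemann on manifold}.

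\emph{1-Lipschitz.} For each $s$, the triangle inequality gives $|f_s(x)-f_s(y)|=|d(\gamma(s),x)-d(\gamma(s),y)|\le d(x,y)$. Letting $s\to+\infty$ preserves this bound, so $b$ is $1$-Lipschitz. I will also record that $s\mapsto f_s(x)$ is nonincreasing, since $d(\gamma(s'),x)\le (s'-s)+d(\gamma(s),x)$ for $s'>s$. This gives a monotone limit, consistent with the existence statement \cite[Lemma~II.8.18]{bridson2013metric}.

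\emph{Convexity.} On a Hadamard manifold, each distance function $x\mapsto d(p,x)$ is geodesically convex. This is the convexity of the metric in CAT(0) spaces, or equivalently the convex case of the uniform convexity already used in the proof of \Cref{trm: alpha averaged operators are quasi alpha FME}. Hence every $f_s$ is convex along geodesics. Convexity along a geodesic $c$ means $f_s(c(t))\le (1-t)f_s(c(0))+tf_s(c(1))$, and this inequality survives pointwise limits. So $b$ is geodesically convex.

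\emph{Regularity.} This is the main obstacle. The first-order part is manageable. Away from $\gamma(s)$, $\grad f_s(x)=-\log_x(\gamma(s))/\|\log_x(\gamma(s))\|$, which is the unit vector at $x$ pointing away from $\gamma(s)$. As $s\to\infty$, these unit vectors converge, by comparison of angles in CAT(0) geometry, to the unit vector pointing away from the ideal point $\xi$. Since the limit is uniform on compact sets and the convergence of the $f_s$ is locally uniform, $b$ is $\mathcal C^1$ with $\|\grad b\|=1$.

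For $\mathcal C^2$ regularity I would rely on the classical result of Heintze--Im~Hof (\emph{Geometry of horospheres}, J.\ Differential Geom.\ 1977). The Hessian of $f_s$ at $x$ is the second fundamental form of the geodesic sphere through $x$ centred at $\gamma(s)$. By comparison theory this form is locally uniformly bounded, because the curvature is bounded on compact sets. Moreover, it is determined by the solution of a matrix Riccati equation along the geodesic from $\gamma(s)$ to $x$. As $s\to\infty$, these solutions converge to the stable Riccati solution associated with $\xi$, which depends continuously on the initial data. This yields locally uniform convergence of $\Hess f_s$, and therefore $b\in\mathcal C^2$.

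In the proof I will cite this result rather than reprove it, and note that stronger smoothness is not claimed in general. The consequences used later then follow directly. Unit-speed asymptotic geodesics are integral curves of $-\grad b$, in agreement with \cite{criscitiello2025horospherically}, and convexity gives $\Hess b\succeq 0$.
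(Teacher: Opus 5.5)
Your proposal is correct and follows essentially the same route as the paper. The Lipschitz bound comes from the triangle inequality, and $\mathcal C^2$ regularity comes from citing Heintze--Im~Hof, which is the paper's reference too; the only difference is that you prove convexity directly, as a pointwise limit of convex distance functions, where the paper cites Bridson--Haefliger. Your additional $\mathcal C^1$ and Riccati sketches are consistent with that citation.
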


\begin{proof}[Proof of \Cref{lemma: properties of Busemann functions on Hadamard manifolds}]
Let $\gamma : [0,+\infty) \to \mathcal{M}$ be a geodesic ray and $b_\gamma(x)$ its associated Busemann function given by \eqref{eq: busemann on manifold}. The characterization of horofunctions \cite[Proposition~8.22]{bridson2013metric} guarantees that the function $b_\gamma(x)$ is convex on Hadamard manifolds. By the triangle inequality, $b_{\gamma}$ is 1-Lipschitz. In fact, $\| \nabla b_\gamma \| = 1 $ by the Gauss Lemma. In particular, $b_\gamma$ is differentiable a.e. and, on a Hadamard manifold, it is of class $\mathcal{C}^2$ \cite{heintze1977geometry, Ballmann1995LecturesOS, bessa2015curvature}.

\end{proof}

We now provide a proof of \Cref{trm:product-general} after restating it.
\begin{theorem*}
     Let $(\M,g)$ be a Hadamard manifold and $\N = \M^m$ the product manifold equipped with the product metric. Let $a:\R^m\to\R^m$ be Lipschitz continuous as a vector-valued function. Assume
    \[
        a_k(z)\geq0,
        \qquad k=1,\dots,m,
        \qquad z\in\R^m,
    \]
    and assume that $R(z):=z-a(z)$ is nonexpansive in the Euclidean norm on $\R^m$.
    Define
    \[
         T:\N\to\N,
         \qquad
         T(X)^k=\Phi_{a_k(B(X))}(x^k),
         \qquad k=1,\dots,m.
     \]
    Then $T$ is nonexpansive on $\N$.
\end{theorem*}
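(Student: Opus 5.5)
We adapt version 1 of the proof of \Cref{trm:Ttau nonexpansive}. Fix $X,Y\in\N$ and let $\Gamma=(\gamma^1,\dots,\gamma^m):[0,1]\to\N$ be the minimizing geodesic from $X$ to $Y$. In the product metric each $\gamma^k$ is a geodesic of $\M$, and $\|\dot\Gamma(s)\|^2=\sum_k\|\dot\gamma^k(s)\|^2$ is constant and equal to $d_\N(X,Y)^2$. Since $d_\N(T(X),T(Y))\le \ell(T\circ\Gamma)$, it suffices to show $\|(T\circ\Gamma)'(s)\|\le\|\dot\Gamma(s)\|$ for a.e. $s$.

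Set $w(s):=B(\Gamma(s))\in\R^m$, so $w_k(s)=b(\gamma^k(s))$. This curve is $\C^1$ because $b\in\C^2$ by \Cref{lemma: properties of Busemann functions on Hadamard manifolds}. Since $a$ is Lipschitz, $a\circ w$ is Lipschitz, hence absolutely continuous and differentiable a.e. Together with the joint $\C^1$ regularity of $(t,x)\mapsto\Phi_t(x)$ from \Cref{lemma: phi coincides with exp}, this makes $T\circ\Gamma$ absolutely continuous.

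For the derivative, we need $(a\circ w)'(s)=Da(w(s))\,w'(s)$ a.e. This is the main obstacle. In dimension $m>1$ the one-dimensional chain rule for Lipschitz maps does not apply directly, since $w$ may run inside a null set where $a$ fails to be differentiable. We will avoid the chain rule and keep only the a.e. derivative $v(s):=(a\circ w)'(s)$. The needed bound follows from nonexpansiveness of $R$: for $h\to0$,
\[
\bigl\|(w(s+h)-a(w(s+h)))-(w(s)-a(w(s)))\bigr\|\le\|w(s+h)-w(s)\|.
\]
Dividing by $|h|$ gives $\|w'(s)-v(s)\|_2\le\|w'(s)\|_2$ at every $s$ where both derivatives exist.

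At such $s$, exactly as in version 1, each component satisfies
\[
(T\circ\Gamma)^{k\,\prime}(s)=D\Phi_{a_k(w(s))}\Bigl(\dot\gamma^k(s)-v_k(s)\,\grad b(\gamma^k(s))\Bigr).
\]
This uses $D\Phi_t(\grad b)=\grad b\circ\Phi_t$ and $\partial_t\Phi_t=-\grad b\circ\Phi_t$. Because $a_k\ge0$, the flow times are nonnegative, and the monotonicity argument (convexity of $b$ and the results of \cite{ghirardelli2025conditional}) gives $\|D\Phi_t\eta\|\le\|\eta\|$.

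Next decompose $\dot\gamma^k(s)=w_k'(s)\,\grad b(\gamma^k(s))+\xi^k_H(s)$ with $\xi^k_H(s)\perp\grad b(\gamma^k(s))$. Using $\|\grad b\|=1$, we obtain
\[
\|(T\circ\Gamma)'(s)\|^2\le\sum_k\Bigl(\|\xi_H^k(s)\|^2+\bigl(w_k'(s)-v_k(s)\bigr)^2\Bigr)=\sum_k\|\xi^k_H(s)\|^2+\|w'(s)-v(s)\|_2^2.
\]
By the bound from the previous step this is at most $\sum_k\|\xi^k_H(s)\|^2+\|w'(s)\|_2^2=\|\dot\Gamma(s)\|^2$. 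Integrating over $s\in[0,1]$ gives $\ell(T\circ\Gamma)\le\ell(\Gamma)=d_\N(X,Y)$, so $T$ is nonexpansive.
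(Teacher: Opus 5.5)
Your proposal is correct and is essentially the paper's own proof. It uses the same minimizing product geodesic and the same difference-quotient bound $\|w'(s)-(a\circ w)'(s)\|_2\le\|w'(s)\|_2$, obtained directly from nonexpansiveness of $R$ so that no multidimensional chain rule is needed; then the same componentwise formula $D\Phi_{a_k(w(s))}\bigl(\dot\gamma^k(s)-(a_k\circ w)'(s)\grad b(\gamma^k(s))\bigr)$, nonexpansiveness of $D\Phi_t$ for $t\ge 0$, and the orthogonal decomposition summed over $k$, differing only in writing $w$ for the paper's $r=B\circ\Gamma$.
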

\begin{proof}
Let $X,Y\in\mathcal N$, and let $\Gamma:[0,1]\to\mathcal N$ be the
minimizing geodesic joining them. Write
\[
    \Gamma(s)=(\gamma^1(s),\dots,\gamma^m(s)).
\]
It is enough to show
\[
    L(T\circ\Gamma)\le L(\Gamma).
\]

For each $k=1,\dots,m$, set
\[
    r_k(s):=b(\gamma^k(s)),
    \qquad
    r(s):=B(\Gamma(s))=(r_1(s),\dots,r_m(s))\in\mathbb R^m.
\]
Since $\Gamma$ is a smooth geodesic and $b\in\mathcal C^2$, the curve
$r$ is $\mathcal C^1$, hence Lipschitz on $[0,1]$. Since $R:=I-a$ is
nonexpansive, $R\circ r$ is Lipschitz. Therefore $R\circ r$ is
differentiable for a.e. $s\in[0,1]$, and at such points
\[
    \|(R\circ r)'(s)\|_2
    =
    \lim_{h\to0}
    \frac{\|R(r(s+h))-R(r(s))\|_2}{|h|}
    \le
    \lim_{h\to0}
    \frac{\|r(s+h)-r(s)\|_2}{|h|}
    =
    \|r'(s)\|_2.
\]
Moreover, since $a=I-R$, the curve $a\circ r$ is Lipschitz as well.
Hence $a\circ r$ is differentiable for a.e. $s$, and at points where
both derivatives exist,
\[
    (R\circ r)'(s)=r'(s)-(a\circ r)'(s).
\]

For a.e. $s$, decompose each component velocity as
\[
    \dot\gamma^k(s)
    =
    r_k'(s)\grad b(\gamma^k(s))+\xi_H^k(s),
    \qquad
    \xi_H^k(s)\perp \grad b(\gamma^k(s)).
\]
Since $\|\grad b\|=1$, this gives
\[
    \|\dot\Gamma(s)\|^2
    =
    \sum_{k=1}^m\|\dot\gamma^k(s)\|^2
    =
    \|r'(s)\|_2^2+\sum_{k=1}^m\|\xi_H^k(s)\|^2.
\]

Now set
\[
    A_k(s):=a_k(r(s))=a_k(B(\Gamma(s))).
\]
Then
\[
    T(\Gamma(s))^k
    =
    \Phi_{A_k(s)}(\gamma^k(s)).
\]
At a.e. $s$, using $\partial_t\Phi_t=-\grad b\circ\Phi_t$, we obtain
\[
    \frac{d}{ds}T(\Gamma(s))^k
    =
    D\Phi_{A_k(s)}(\dot\gamma^k(s))
    -
    A_k'(s)\grad b(T(\Gamma(s))^k).
\]
Since
\[
    D\Phi_t(\grad b)=\grad b\circ\Phi_t,
\]
we can rewrite this as
\[
    \frac{d}{ds}T(\Gamma(s))^k
    =
    D\Phi_{A_k(s)}
    \left(
        \dot\gamma^k(s)-A_k'(s)\grad b(\gamma^k(s))
    \right).
\]
Using the decomposition
\[
    \dot\gamma^k(s)
    =
    r_k'(s)\grad b(\gamma^k(s))+\xi_H^k(s)
\]
and the identity
\[
    r_k'(s)-A_k'(s)=(R\circ r)'_k(s),
\]
we get
\[
    \frac{d}{ds}T(\Gamma(s))^k
    =
    D\Phi_{A_k(s)}
    \left(
        \xi_H^k(s)+(R\circ r)'_k(s)\grad b(\gamma^k(s))
    \right).
\]
Since $A_k(s)=a_k(r(s))\ge0$, the differential of the Busemann flow
$\Phi_{A_k(s)}$ is nonexpansive. Therefore
\[
    \left\|\frac{d}{ds}T(\Gamma(s))^k\right\|^2
    \le
    \left\|
        \xi_H^k(s)+(R\circ r)'_k(s)\grad b(\gamma^k(s))
    \right\|^2.
\]
Because $\xi_H^k(s)\perp\grad b(\gamma^k(s))$ and $\|\grad b\|=1$, this gives
\[
    \left\|\frac{d}{ds}T(\Gamma(s))^k\right\|^2
    \le
    \|\xi_H^k(s)\|^2+\left|(R\circ r)'_k(s)\right|^2.
\]
Summing over $k$ gives
\[
    \left\|\frac{d}{ds}T(\Gamma(s))\right\|^2
    \le
    \sum_{k=1}^m\|\xi_H^k(s)\|^2+\|(R\circ r)'(s)\|_2^2.
\]
Using $\|(R\circ r)'(s)\|_2\le\|r'(s)\|_2$, we obtain
\[
    \left\|\frac{d}{ds}T(\Gamma(s))\right\|^2
    \le
    \sum_{k=1}^m\|\xi_H^k(s)\|^2+\|r'(s)\|_2^2
    =
    \|\dot\Gamma(s)\|^2.
\]
Thus
\[
    L(T\circ\Gamma)
    =
    \int_0^1\left\|\frac{d}{ds}T(\Gamma(s))\right\|\,ds
    \le
    \int_0^1\|\dot\Gamma(s)\|\,ds
    =
    L(\Gamma).
\]
Since $\Gamma$ is minimizing,
\[
    d_{\mathcal N}(T(X),T(Y))
    \le
    L(T\circ\Gamma)
    \le
    L(\Gamma)
    =
    d_{\mathcal N}(X,Y).
\]
Therefore $T$ is nonexpansive.
\end{proof}
\section{Calculations for tightness of the nonexpansiveness bound}\label{app:calculations-tight-bound}
We now provide an explicit example on $\mathbb{D}^2$ demonstrating that the bound in \eqref{eq: condition for nonexpansiveness} is tight.

Fix $P\in S^1$, i.e. $P\in\mathbb{R}^2$ with $\|P\|_2=1$. Let us consider two points
\[
P_i=-r_iP,\qquad 0<r_i<1,\,\,i=1,2,
\]
and
\[
b_P(x)=\log\left(\frac{\|x-P\|^2}{1-\|x\|^2}\right).
\]
On the diameter through $P$, write any point as $x=\rho P$, where $\rho\in (-1,1)$. Then
\[
b_P(\rho P) = \log\left(\frac{(1-\rho)^2}{1-\rho^2}\right) = \log\left(\frac{1-\rho}{1+\rho}\right) = -2\tanh^{-1}(\rho).
\]
For $P_i=-r_iP$, this gives
\[
b_P(P_i) = b_P(-r_iP) = 2\tanh^{-1}(r_i)>0.
\]
We then consider the potential
\begin{equation}\label{eq:AppPotential}
V(x)=\frac12\operatorname{ReLU}(b_P(x))^2,
\end{equation}
i.e., set $\beta=0$ and $\lambda=1$. The update it defines is
\[
T_\tau(x) = \Phi_{\tau\operatorname{ReLU}(b_P(x))}(x).
\]
At $P_1$ and $P_2$, $\mathrm{ReLU}$ is active, so the flow time is
\[
s_i(\tau) = \tau b_P(P_i) = 2\tau\tanh^{-1}(r_i).
\]
Let us now use the defining property of the Busemann flow $b_P(\Phi_s(x))=b_P(x)-s$.
Let $\gamma_i(\tau):=T_\tau(P_i)$. Since the flow stays on the diameter, we write $\gamma_i(\tau)=\rho_i(\tau)P$. Then
\[
b_P(\gamma_i(\tau)) = b_P(P_i)-s_i(\tau) = 2\tanh^{-1}(r_i)-2\tau\tanh^{-1}(r_i) = 2(1-\tau)\tanh^{-1}(r_i).
\]
But also $b_P(\rho_i(\tau)P) = -2\tanh^{-1}(\rho_i(\tau))$. Therefore $\rho_i(\tau) = \tanh\left((\tau-1)\tanh^{-1}(r_i)\right)$. We conclude that
\[
\gamma_i(\tau) = T_\tau(P_i) = \tanh\left((\tau-1)\tanh^{-1}(r_i)\right)P.
\]
The expansivity ratio to compute is now $\ell(\tau):=d(\gamma_1(\tau),\gamma_2(\tau))/d(P_1,P_2)$. Since both Riemannian distances are between points of the form $\lambda_1 P$ and $\lambda_2 P$,  $\lambda_1,\lambda_2\in (-1,1)$, we remark that
\[
d(\lambda_1P,\lambda_2 P) = 2\tanh^{-1}\left(\left\|\frac{\lambda_1P-\lambda_2P}{1-\lambda_1\lambda_2\|P\|^2}\right\|\right).
\]
Since $\|P\|_2=1$, we recover that 
\[
d(\lambda_1P,\lambda_2P)=2\left|\tanh^{-1}\left(\frac{\lambda_1-\lambda_2}{1-\lambda_1\lambda_2}\right)\right|=2|\tanh^{-1}(\lambda_1)-\tanh^{-1}(\lambda_2)|.
\]
This allows us to recover
\[
d(\gamma_1(\tau),\gamma_2(\tau))=2|\tau-1|\cdot|\tanh^{-1}(r_1)-\tanh^{-1}(r_2)|=|1-\tau|d(P_1,P_2),
\]
and that $\ell(\tau)=|1-\tau|$. Therefore, $\ell(\tau)\leq 1$ if and only if $0\leq \tau\leq \tau_{\max}=2$, which is the theoretical bound predicted by \eqref{eq: condition for nonexpansiveness} for the potential in \eqref{eq:AppPotential}.

\end{document}